\setlist[itemize]{noitemsep}
\setlist[enumerate]{noitemsep}
\algrenewcommand\algorithmicrequire{\textbf{Input:}}
\algrenewcommand\algorithmicensure{\textbf{Output:}}
\pgfplotsset{compat=1.15}
\numberwithin{equation}{section}
\theoremstyle{plain}
\newtheorem{theorem}{Theorem}[section]
\newtheorem{proposition}[theorem]{Proposition}
\newtheorem{corollary}[theorem]{Corollary}
\newtheorem{lemma}[theorem]{Lemma}
\newtheorem{notation}[theorem]{Notation}
\theoremstyle{definition}
\newtheorem{definition}[theorem]{Definition}
\newtheorem{remark}[theorem]{Remark}
\newtheorem{question}[theorem]{Question}
\newtheorem{assumption}[theorem]{Assumption}
\newcommand\restr[2]{{% we make the whole thing an ordinary symbol
  \left.\kern-\nulldelimiterspace % automatically resize the bar with \right
  #1 % the function
  \vphantom{\big|} % pretend it's a little taller at normal size
  \right|_{#2} % this is the delimiter
  }}
\def\mathcenterto#1#2{\mathclap{\phantom{#1}\mathclap{#2}}\phantom{#1}}
\let\old@widetilde\widetilde
\def\widetildeto#1#2{\mathcenterto{#2}{\old@widetilde{\mathcenterto{#1}{#2\,}}}}
\let\old@widehat\widehat
\def\widehatto#1#2{\mathcenterto{#2}{\old@widehat{\mathcenterto{#1}{#2\,}}}}
\newcommand{\abs}[1]{\left| #1 \right|} % \abs a b le pone | |
\newcommand{\size}[1]{\left| #1 \right|} % \size a b le pone | |
\newcommand{\pare}[1]{\left( #1 \right)} % \pare a b le pone ( )
\newcommand{\set}[1]{{\left\{ #1 \right\}}} % \set a b le pone { }
\newcommand*\closure[1]{\overline{#1}}
\newcommand{\JK}{\textup{v}_{J_{\mathcal{K}_{n}}}(J_{G})}
\newcommand{\VPJ}{\textup{v}_{P_{\mathcal{S}}}(J_G)}
\newcommand{\DG}{\mathcal{D}_c(G)}
\newcommand{\JS}{J_{\mathcal{T}(\mathcal{S})}}
\newcommand{\GS}{\mathcal{G}_{\mathcal{S}}}
\newcommand{\TS}{\mathcal{T}(\mathcal{S})}
\newcommand{\DMS}{\mathcal{D}(M(\mathcal{S}))}
\newcommand{\DVV}{\mathcal{D}_{c}(V_{1},V_{2})}
\newcommand{\VCN}{\textup{v}_{P_{\mathcal{S}}}(J_{C_{n}})}
\newcommand{\VCNN}{\textup{v}(J_{C_{n}})}
\newcommand{\Ccal}{\mathcal{C}}
\newcommand{\CS}{\mathcal{S}}
\newcommand{\vs}{\vspace*{.5em}}
\DeclareMathOperator{\rank}{rk}
\DeclareMathOperator{\supp}{supp}
\DeclareMathOperator{\CC}{\mathbb{C}}
\DeclareMathOperator{\VV}{\mathbb{V}}
\newtheorem{theoremA}{Theorem}
\newtheorem{theoremB}{Theorem}
\newtheorem{theoremC}{Theorem}
\newtheorem{theoremD}{Theorem}
\newtheorem{strategy}{Strategy}
\title{\vspace*{-2em} The V-Number of Binomial Edge Ideals: Minimal Cuts and Cycle Graphs}
\author{Emiliano Liwski}
\date{\vspace*{-.5em}\today\vspace*{-1.5em}}
\begin{document}
\maketitle

\begin{abstract}
The v-number of a graded ideal is an invariant recently introduced in the context of coding theory, particularly in the study of Reed--Muller-type codes. In this work, we study the localized v-numbers of a binomial edge ideal $J_G$ associated to a finite simple graph $G$. We introduce a new approach to compute these invariants, based on the analysis of transversals in families of subsets arising from dependencies in certain rank-two matroids. This reduces the computation of localized v-numbers to the determination of the radical of an explicit ideal and provides upper bounds for these invariants. Using this method, we explicitly compute the localized v-numbers of $J_G$ at the associated minimal primes corresponding to minimal cuts of $G$. Additionally, we determine the v-number of binomial edge ideals for cycle graphs and give an almost complete answer to a conjecture from \cite{dey2024v}, showing that the v-number of a cycle graph $C_n$ is either $\textstyle \left\lceil \frac{2n}{3} \right\rceil$ or $\textstyle \left\lceil \frac{2n}{3} \right\rceil - 1$.
\end{abstract}

\section{Introduction}

Let $S = \mathbb{K}[x_{1}, \ldots, x_{n}] = \oplus_{d=0}^{\infty} S_{d}$ be the polynomial ring in $n$ variables over a field $\mathbb{K}$, endowed with the standard grading. For a graded ideal $I \subset S$, the set of associated prime ideals, denoted by $\textup{Ass}(I)$ or $\textup{Ass}(S/I)$, consists of all prime ideals of the form $(I : f)$ for some homogeneous element $f \in S_{d}$. In \cite{cooper2020generalized}, the authors introduced a new invariant for graded ideals of $S$, called the \emph{\textup{v}-number}, in the context of their study of Reed–Muller-type codes.

\begin{definition}\textup{\cite[Definition~4.1]{cooper2020generalized}}\label{first definition}
Given a homogeneous ideal $I$ of $S$, the v-number of $I$ is the following invariant

\[\textup{v}(I):=\min\{d\geq 0: \exists f\in S_{d}\ \text{and $\mathfrak{p}\in \text{Ass}(I)$ with $(I:f)=\mathfrak{p}$}\}.\]

We define the v-number of $I$ locally at each associated prime $\mathfrak{p}$ of $I$ as:

\[\textup{v}_{\mathfrak{p}}(I):=\min\{d\geq 0: \exists f\in S_{d}\ \text{with $(I:f)=\mathfrak{p}$}\}.\]

Then $\textup{v}(I)=\min\{\textup{v}_{\mathfrak{p}}(I):\mathfrak{p}\in \text{Ass}(I)\}$. We call $\textup{v}_{\mathfrak{p}}(I)$ the localization of $\textup{v}(I)$ at $\mathfrak{p}$.
\end{definition}

The v-number of $I$ offers insight into the asymptotic behavior of the minimum distance function $\delta_{I}$ associated with projective Reed–Muller-type codes \cite{cooper2020generalized,pinto2023graph}. These codes play a fundamental role in error correction and reliable data transmission. Independently of its interpretation in coding theory, the minimum distance function $\delta_I$ is defined for any graded ideal in terms of the Hilbert–Samuel multiplicity of $S/I$; see~\cite{nunez2017footprint}. Furthermore, the local notion of v-numbers generalizes the concept of the degree of a point in a finite set of projective points, as introduced in~\cite{geramita1993cayley}.

Although initially inspired by questions in coding theory, the v-number has since been the subject of extensive study from algebraic and combinatorial perspectives. A significant body of work has focused on the v-number of monomial ideals (see \cite{biswas2024study,civan2023v,saha2024v,saha2022v,jaramillo2021v}). The first article devoted entirely to this invariant is \cite{jaramillo2021v}, where the authors investigated the v-number of edge ideals, providing a combinatorial formula for the v-number of squarefree monomial ideals. In particular, their formula admits a natural interpretation in terms of the face structure of an associated simplicial complex. In addition, several recent works have examined the asymptotic behavior of the v-number; see \cite{conca2024note,ficarra2025asymptotic,kumar2025slope}.

Several other works have explored the relationship between the v-number of $I$ and the Castelnuovo--Mumford regularity of $S/I$. In \cite{jaramillo2021v}, the authors establish that, in many relevant cases, $\textup{v}(I)$ is a lower bound for the regularity of $S/I$. Furthermore, in \cite{saha2022v}, it is shown that for certain classes of graphs, one has $\textup{v}(I) \leq \textup{im}(G) \leq \textup{reg}(S/I)$, where $\textup{im}(G)$ denotes the induced matching number of $G$ and $I$ is the edge ideal associated to $G$.

In general, computing the v-number of a graded ideal is a challenging task, especially in the case of non-monomial ideals. In this work, we investigate the v-number of binomial edge ideals.

\begin{definition}\label{definition binomial edge}
Let $S=\mathbb{K}[x_1,\ldots, x_n, y_1,\ldots, y_n]$, and let $G$ be a simple
graph on the vertex set $V(G)=[n]$ with set of
edges $E(G)$. The {\em binomial edge ideal} of $G$, denoted by $J_{G}$, is the quadratic binomial ideal defined by:

\[J_{G}:=(f_{i,j}:=x_{i}y_{j}-x_{j}y_{i}:\{i,j\}\in E(G),i<j).\]

In other words, $J_{G}$ is the ideal generated by the $2$-minors of the generic matrix
\begin{equation}\label{matrix X}X=\begin{pmatrix}
x_{1} & x_{2} & x_{3} & \cdots & x_{n}\\
y_{1} & y_{2} & y_{3} & \cdots & y_{n}
\end{pmatrix}\end{equation}
corresponding to the pairs of columns indexed by the edges of $G$. 
\end{definition}

Binomial edge ideals were introduced by Herzog, Hibi, Hreinsdottir, Kahle, and Rauh \cite{herzog2010binomial}, and by Ohtani \cite{ohtani2011graphs}, as a means of associating algebraic structures to graphs. %Since their introduction, significant effort has been devoted to understanding the connections between the combinatorial invariants of a graph and the algebraic properties of its binomial edge ideal. 
Today, these ideals represent an active research topic in combinatorial commutative algebra and have also found important applications in algebraic statistics, where they appear as ideals generated by conditional independence statements (see \cite{herzog2010binomial}).

In this work, we focus on computing the v-numbers of a binomial edge ideal $J_G$ at its minimal primes, where $G$ is a simple graph on the vertex set $[n]$. Compared to the monomial case, this represents a more challenging problem. However, since the v-number of $J_G$ is additive over the connected components of $G$ (see~\textup{\cite[Proposition~3.1]{jaramillo2024connected}}), it suffices to restrict our attention to the case where $G$ is connected. In this setting, the minimal primes of $J_G$ correspond bijectively to the \emph{minimal $k$-cuts} of $G$ together with the empty set; see Definition~\ref{minimal cuts} and Theorem~\ref{Theorem minimal primes}. For each subset $\mathcal{S} \subset [n]$ that is either empty or a minimal $k$-cut, we denote by $P_{\mathcal{S}}$ the corresponding minimal prime, as defined in Definition~\ref{PS}. The main question addressed in this work is the following:

\begin{question}\label{question 2}
Given a connected graph $G$, determine the localized v-number $\textup{v}_{P_{\mathcal{S}}}(J_G)$ of its associated binomial edge ideal $J_G$ at each minimal prime $P_{\mathcal{S}} \in \textup{Ass}(J_G)$.
\end{question}

In \cite{dey2024v}, this question was studied for several classes of graphs, including Cohen--Macaulay closed graphs, path graphs, cycles, and binary trees. Furthermore, Question~\ref{question 2} was independently resolved for the case $\mathcal{S} = \emptyset$ in \cite{ambhore2024v} and \cite{jaramillo2024connected}, where the authors provided a combinatorial interpretation of the invariant in terms of the connected domination number of the graph. In this work, we aim to address this question in two specific settings: \begin{enumerate} 
\item For any connected graph $G$ and any minimal cut $\mathcal{S}$ of $G$. 
\item For $G$ equal to the cycle graph $C_n$. 
\end{enumerate}

\smallskip

\noindent {\bf Our contributions.} We now summarize the main contributions of this work. Throughout, let $G$ be a simple connected graph on $[n]$, and let $\min(G)$ denote the collection of all minimal $k$-cuts of $G$, together with the empty set. We denote by $V(G)$ and $E(G)$ the vertex set and edge set of $G$, respectively. In Section~\ref{section transversals}, we introduce a new approach to Question~\ref{question 2}, which relies on constructing an explicit ideal defined by transversals of a family of subsets arising from the dependencies of certain rank-two matroids. This formulation allows us to reduce Question~\ref{question 2} to the problem of determining the radical of this ideal. For clarity, we begin by introducing the necessary definitions. We refer the reader to Subsection~\ref{matroids} for the relevant background on matroids.

\begin{definition}\label{definition MS}
Let $G$ be a simple graph on $[n]$, and let $\mathcal{S} \subset [n]$. Denote by $G_{1}, \ldots, G_{c(\mathcal{S})}$ the connected components of the induced subgraph $G_{[n] \setminus \mathcal{S}}$. We define the matroid $M(\mathcal{S})$ on $[n]$ of rank at most two by specifying its dependent sets:
\[
\mathcal{D}(M(\mathcal{S})) = \{ B \subseteq [n] : B \cap \mathcal{S} \neq \emptyset \} \cup_{i\in c(\mathcal{S})} \{ B : |B \cap V(G_{i})| \geq 2\} \cup \{ B : |B| \geq 3 \}.
\]
\end{definition}

\begin{definition}(Definition~\ref{important notation})
For a fixed $\mathcal{S} \in \min(G)$, we define $\Delta_{\mathcal{S}}$ as the collection
\[
\left\{ \mathcal{D}(M(\mathcal{S}')) \setminus \mathcal{D}(M(\mathcal{S})) : \mathcal{S}' \in \min(G) \setminus \{\mathcal{S}\} \right\} 
\subseteq \textstyle\left( \binom{[n]}{1} \cup \binom{[n]}{2} \right) \setminus \mathcal{D}(M(\mathcal{S})).
\]
We denote by $\mathcal{T}(\mathcal{S})$ the collection of all transversals of $\Delta_{\mathcal{S}}$. Each $A \in \mathcal{T}(\mathcal{S})$ can be written uniquely as a disjoint union $A = A_1 \amalg A_2$, where 
\[
A_1 \subseteq \textstyle \binom{[n]}{1} \quad \text{and} \quad A_2 \subseteq \binom{[n]}{2}.
\]
We identify $A_1$ naturally with a subset of $[n]$.
\end{definition}

We now introduce the ideal associated to transversals, as mentioned above.

\begin{definition}(Definitions~\ref{important notation} and~\ref{notation JS})
Let $A \in \mathcal{T}(\mathcal{S})$ and consider a partition $\{C, D\}$ of $A_{1}$. We define
\[
g_{A,C,D} := \prod_{k \in C} x_k   \prod_{k \in D} y_k   \prod_{\{i,j\} \in A_2} (x_{i}y_{j}-x_{j}y_{i}).
\]
The ideal associated to $\mathcal{T}(\mathcal{S})$ is then defined by
\[
J_{\mathcal{T}(\mathcal{S})} := \left( g_{A,C,D} \ : \ A \in \mathcal{T}(\mathcal{S}), \ C \amalg D = A_1 \right).
\]
\end{definition}

In Theorem~\ref{theorem transversal}, we show that the saturation $(J_{G} : P_{\mathcal{S}})$ coincides with the radical of $J_{G} + J_{\mathcal{T}(\mathcal{S})}$.
The following theorem is the main result of Section~\ref{section transversals}, where we apply the method of transversals of matroid dependencies. This provides a new framework for computing v-numbers or for deriving effective upper bounds.

\begin{theoremA} (Theorem~\ref{corollary radical}) \label{corollary radical 2}
With the notation above, the localized v-number satisfies
\[
\textup{v}_{P_{\mathcal{S}}}(J_{G}) \leq \min \{ |A_{1}| + 2|A_{2}| : A \in \mathcal{T}(\mathcal{S}) \}.
\]
Moreover, equality holds if the ideal $J_{G}+J_{\mathcal{T}(\mathcal{S})}$ is radical.
\end{theoremA}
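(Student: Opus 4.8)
The plan is to reduce everything to Theorem~\ref{theorem transversal}, which gives $(J_{G}:P_{\mathcal{S}})=\sqrt{J_{G}+J_{\mathcal{T}(\mathcal{S})}}$, together with a standard reformulation of the localized v-number. First I would record the elementary fact that, since $J_{G}$ is radical and its minimal primes are exactly the pairwise incomparable primes $\{P_{\mathcal{S}'}:\mathcal{S}'\in\min(G)\}$, for any homogeneous $f$ one has $(J_{G}:f)=\bigcap_{\mathcal{S}'\,:\,f\notin P_{\mathcal{S}'}}P_{\mathcal{S}'}$; hence $(J_{G}:f)=P_{\mathcal{S}}$ if and only if $f\notin P_{\mathcal{S}}$ and $f\in P_{\mathcal{S}'}$ for every $\mathcal{S}'\neq\mathcal{S}$, i.e. $f\in(J_{G}:P_{\mathcal{S}})\setminus P_{\mathcal{S}}$. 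Thus $\textup{v}_{P_{\mathcal{S}}}(J_{G})$ equals the least degree of a homogeneous element of $(J_{G}:P_{\mathcal{S}})$ not lying in the prime $P_{\mathcal{S}}$.

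For the inequality, fix $A\in\mathcal{T}(\mathcal{S})$ and any partition $C\amalg D=A_{1}$, and consider $g_{A,C,D}$, which is homogeneous of degree $|C|+|D|+2|A_{2}|=|A_{1}|+2|A_{2}|$. On one hand $g_{A,C,D}\in J_{\mathcal{T}(\mathcal{S})}\subseteq J_{G}+J_{\mathcal{T}(\mathcal{S})}\subseteq\sqrt{J_{G}+J_{\mathcal{T}(\mathcal{S})}}=(J_{G}:P_{\mathcal{S}})$ by Theorem~\ref{theorem transversal}. On the other hand $g_{A,C,D}\notin P_{\mathcal{S}}$: since $P_{\mathcal{S}}$ is prime it suffices that each factor avoids $P_{\mathcal{S}}$, and this holds because $A\subseteq\bigl(\binom{[n]}{1}\cup\binom{[n]}{2}\bigr)\setminus\mathcal{D}(M(\mathcal{S}))$ forces every $k\in A_{1}$ to lie outside $\mathcal{S}$ (so $x_{k},y_{k}\notin P_{\mathcal{S}}$) and every pair $\{i,j\}\in A_{2}$ to have both endpoints outside $\mathcal{S}$ and in distinct connected components of $G_{[n]\setminus\mathcal{S}}$ (so $x_{i}y_{j}-x_{j}y_{i}\notin P_{\mathcal{S}}$), using the explicit description of $P_{\mathcal{S}}$ in terms of $M(\mathcal{S})$. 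By the first paragraph $(J_{G}:g_{A,C,D})=P_{\mathcal{S}}$, so $\textup{v}_{P_{\mathcal{S}}}(J_{G})\le|A_{1}|+2|A_{2}|$; minimizing over $A$ gives the bound.

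For the equality statement, assume $J_{G}+J_{\mathcal{T}(\mathcal{S})}$ is radical, so $(J_{G}:P_{\mathcal{S}})=J_{G}+J_{\mathcal{T}(\mathcal{S})}$ by Theorem~\ref{theorem transversal}, and put $m:=\min\{|A_{1}|+2|A_{2}|:A\in\mathcal{T}(\mathcal{S})\}$; the previous step already gives $\textup{v}_{P_{\mathcal{S}}}(J_{G})\le m$. Every generator $g_{A,C,D}$ of $J_{\mathcal{T}(\mathcal{S})}$ has degree $|A_{1}|+2|A_{2}|\ge m$, so the graded piece $(J_{\mathcal{T}(\mathcal{S})})_{d}$ vanishes for $d<m$; hence $(J_{G}:P_{\mathcal{S}})_{d}=(J_{G})_{d}+(J_{\mathcal{T}(\mathcal{S})})_{d}=(J_{G})_{d}\subseteq(P_{\mathcal{S}})_{d}$ for all $d<m$. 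Therefore $(J_{G}:P_{\mathcal{S}})$ contains no homogeneous element of degree $<m$ outside $P_{\mathcal{S}}$, which by the first paragraph forces $\textup{v}_{P_{\mathcal{S}}}(J_{G})\ge m$, hence equality.

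The only substantive ingredient is Theorem~\ref{theorem transversal}; granting it, the argument is essentially formal, and I expect the one point needing care to be the claim that none of the factors of $g_{A,C,D}$ lies in $P_{\mathcal{S}}$ — that is, the dictionary identifying $\mathcal{D}(M(\mathcal{S}))$ with the subsets of columns of $X$ that become dependent modulo $P_{\mathcal{S}}$ — which should be available from the setup of Section~\ref{section transversals}. (If $\mathcal{T}(\mathcal{S})=\emptyset$ the asserted bound is vacuous; in fact $\mathcal{T}(\mathcal{S})\neq\emptyset$, since incomparability of the minimal primes prevents any member of $\Delta_{\mathcal{S}}$ from being empty.)
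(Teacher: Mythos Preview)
Your proof is correct and follows essentially the same route as the paper: use Theorem~\ref{theorem transversal} to place $g_{A,C,D}$ in $(J_{G}:P_{\mathcal{S}})$, check that $g_{A,C,D}$ avoids the relevant ideal so it witnesses the v-number, and for the equality clause use that every generator of $J_{\mathcal{T}(\mathcal{S})}$ has degree at least $m$. The only cosmetic difference is that the paper invokes the ready-made characterization $\textup{v}_{P_{\mathcal{S}}}(J_{G})=\alpha\bigl((J_{G}:P_{\mathcal{S}})/J_{G}\bigr)$ from Theorem~\ref{results v number} and accordingly verifies $g_{A,C,D}\notin J_{G}$, whereas you derive the equivalent criterion $f\in(J_{G}:P_{\mathcal{S}})\setminus P_{\mathcal{S}}$ directly from the primary decomposition and verify the (slightly stronger) $g_{A,C,D}\notin P_{\mathcal{S}}$; these are interchangeable since $J_{G}\subseteq P_{\mathcal{S}}$.
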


Even when $J_{G} + J_{\mathcal{T}(\mathcal{S})}$ is not radical, this approach still offers an effective strategy for computing $\textup{v}_{P_{\mathcal{S}}}(J_G)$.

\begin{strategy}(Remark~\ref{remark strategy})\label{strategy}
To compute $\textup{v}_{P_{\mathcal{S}}}(J_G)$, we proceed as follows:
\begin{enumerate}
\item[{\rm (i)}] Determine the set of all transversals $\mathcal{T}(\mathcal{S})$.
\item[{\rm (ii)}] Compute the radical of $J_{G} + J_{\mathcal{T}(\mathcal{S})}$, typically via the computation of a Gröbner basis.
\item[{\rm (iii)}] Identify the minimal degree of a homogeneous polynomial in $\textstyle \sqrt{J_{G} + J_{\mathcal{T}(\mathcal{S})}}/J_G$. By Theorems~\ref{results v number} and~\ref{theorem transversal}, this value equals $\textup{v}_{P_{\mathcal{S}}}(J_G)$.
\end{enumerate}
\end{strategy}

In \cite{ambhore2024v} and \cite{jaramillo2024connected}, the authors provided a combinatorial interpretation of the invariant $\textup{v}_{J_{\mathcal{K}_{n}}}(J_G)$, corresponding to the case $\mathcal{S} = \emptyset$, where $\mathcal{K}_{n}$ denotes the complete graph on $[n]$, in terms of the connected domination number of the graph. We now recall this notion.

\begin{definition}\label{definition connected domination}
Let $G$ be a connected graph. A \emph{connected dominating set} of $G$ is a subset $B \subseteq V(G)$ such that the induced subgraph $G_B$ is connected and every vertex in $V(G) \setminus B$ is adjacent to some vertex in $B$. We denote the collection of connected dominating sets of $G$ by $\mathcal{D}_c(G)$, and define the \emph{connected domination number} of $G$ as
\[
\gamma_c(G) := \min\{ |B| : B \in \mathcal{D}_c(G) \}.
\]
\end{definition}

By applying our new approach described in Strategy~\ref{strategy} to the case $\mathcal{S} = \emptyset$, we recover the result previously obtained in \cite{ambhore2024v} and \cite{jaramillo2024connected}.

\begin{theoremB}(Theorem~\ref{theorem empty})
Let $G$ be a connected graph on $[n]$. Then, we have
\[\JK=\begin{cases}
\gamma_{c}(G) & \text{if $G\neq \mathcal{K}_{n},$}\\
0 & \text{if $G=\mathcal{K}_{n}$.}
\end{cases}
\]
\end{theoremB}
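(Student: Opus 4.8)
The plan is to run Strategy~\ref{strategy} with $\mathcal{S}=\emptyset$; the whole argument then reduces to identifying $\mathcal{T}(\emptyset)$ with $\mathcal{D}_c(G)$. The case $G=\mathcal{K}_n$ is immediate: then $J_G$ is a determinantal prime and $\min(G)=\{\emptyset\}$, so $\Delta_\emptyset=\emptyset$ and $\mathcal{T}(\emptyset)=\{\emptyset\}$, whence Theorem~\ref{corollary radical 2} gives $\textup{v}_{P_\emptyset}(J_G)\le 0$, hence $\textup{v}_{P_\emptyset}(J_G)=0$. Assume henceforth $G\ne\mathcal{K}_n$. Unwinding the definitions of Section~\ref{section transversals} at $\mathcal{S}=\emptyset$: since $G$ is connected, $G_{[n]}=G$ is the unique component and $V(G)=[n]$, so $\mathcal{D}(M(\emptyset))=\{B\subseteq[n]:|B|\ge 2\}$. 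Hence the ambient set $\big(\binom{[n]}{1}\cup\binom{[n]}{2}\big)\setminus\mathcal{D}(M(\emptyset))$ is $\binom{[n]}{1}$, every $A\in\mathcal{T}(\emptyset)$ has $A_2=\emptyset$, and for a nonempty minimal cut $\mathcal{S}'$ one computes $\mathcal{D}(M(\mathcal{S}'))\setminus\mathcal{D}(M(\emptyset))=\{\{v\}:v\in\mathcal{S}'\}$, identified with $\mathcal{S}'\subseteq[n]$. So $\Delta_\emptyset$ is the family of nonempty minimal cuts of $G$, and $\mathcal{T}(\emptyset)$ is the set of subsets of $[n]$ meeting every nonempty minimal cut.

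The combinatorial heart is the claim $\mathcal{T}(\emptyset)=\mathcal{D}_c(G)$. For $\mathcal{D}_c(G)\subseteq\mathcal{T}(\emptyset)$: if a connected dominating set $W$ were disjoint from a nonempty minimal cut $\mathcal{S}'$, then $W$ would lie inside one component of $G_{[n]\setminus\mathcal{S}'}$; picking a vertex $w$ in another component (one exists because $\mathcal{S}'$ is a cut), $w\notin W$ and $w$ has no neighbour in $W$, contradicting domination. For the reverse inclusion, let $A\in\mathcal{T}(\emptyset)$ (so $A\ne\emptyset$, since $\Delta_\emptyset\ne\emptyset$ as $G\ne\mathcal{K}_n$) and set $B=[n]\setminus A$; then $B$ contains no nonempty minimal cut, so $G-T$ is connected for every $T\subseteq B$: otherwise an inclusion-minimal $T\subseteq B$ with $G-T$ disconnected would satisfy $c(T\setminus\{i\})<c(T)$ for all $i\in T$, i.e.\ be a minimal cut contained in $B$, against Definition~\ref{minimal cuts}. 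Taking $T=B$ shows $G_A$ is connected; and $A$ dominates $G$, for if some $v\in B$ had $N_G[v]\subseteq B$ then $N_G(v)$ would be a nonempty subset of $B$ whose removal isolates $v$ from the nonempty set $A$, hence a separator, again yielding a minimal cut inside $B$. Thus $A\in\mathcal{D}_c(G)$.

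With the lemma in hand, the upper bound is immediate from Theorem~\ref{corollary radical 2}, since $\min\{|A_1|+2|A_2|:A\in\mathcal{T}(\emptyset)\}=\min\{|W|:W\in\mathcal{D}_c(G)\}=\gamma_c(G)$. For the lower bound, recall that $J_G=\bigcap_{\mathcal{S}\in\min(G)}P_\mathcal{S}$ is radical with the $P_\mathcal{S}$ as its minimal primes (Theorem~\ref{Theorem minimal primes}); distributing the colon and using that distinct minimal primes are incomparable gives $(J_G:P_\emptyset)=\bigcap_{\mathcal{S}\in\min(G)\setminus\{\emptyset\}}P_\mathcal{S}$, and by Theorems~\ref{results v number} and~\ref{theorem transversal} the invariant $\textup{v}_{P_\emptyset}(J_G)$ equals the least degree of a homogeneous $f\in(J_G:P_\emptyset)\setminus J_G$. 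Any such $f$ is not in $P_\emptyset=J_{\mathcal{K}_n}$, so its image under the Segre specialization $\phi\colon S\to\mathbb{K}[s,t,u_1,\dots,u_n]$, $x_i\mapsto s u_i$, $y_i\mapsto t u_i$ (whose kernel is exactly $J_{\mathcal{K}_n}$ and which kills $J_G$) is nonzero; and since $P_\mathcal{S}$ contains $x_i,y_i$ for $i\in\mathcal{S}$ (Definition~\ref{PS}), we have $\phi(f)\in(su_i,tu_i:i\in\mathcal{S})$ for every nonempty minimal cut $\mathcal{S}$. Each monomial $s^a t^b u_{i_1}\cdots u_{i_r}$ (with distinct $i_j$) occurring in $\phi(f)$ has $a+b=\deg f\ge 1$, so lying in $(su_i,tu_i:i\in\mathcal{S})$ forces $\{i_1,\dots,i_r\}\cap\mathcal{S}\ne\emptyset$; hence $\{i_1,\dots,i_r\}\in\mathcal{T}(\emptyset)=\mathcal{D}_c(G)$, so $\deg f\ge r\ge\gamma_c(G)$. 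Combining the bounds yields $\textup{v}_{P_\emptyset}(J_G)=\gamma_c(G)$.

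I expect the main obstacle to be the inclusion $\mathcal{T}(\emptyset)\subseteq\mathcal{D}_c(G)$ in the combinatorial lemma: one must turn the bare hypothesis ``$A$ meets every minimal cut'' into connectivity of $G_A$ together with domination, the delicate point being that any vertex not dominated by $A$ produces a genuine minimal cut (an inclusion-minimal separator inside its neighbourhood) that $A$ avoids. The remaining pieces — the shape of $M(\emptyset)$, the inequality of Theorem~\ref{corollary radical 2}, and the properties of the Segre specialization — should be routine once this is in place.
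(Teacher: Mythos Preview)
Your proof is correct, and the combinatorial identification $\mathcal{T}(\emptyset)=\mathcal{D}_c(G)$ matches the paper's Lemma~\ref{transversal and connected dominant} and Proposition~\ref{ideal J emptyset} essentially verbatim. Where you genuinely diverge is in the lower bound. The paper does not argue directly: it invokes the equality clause of Theorem~\ref{corollary radical}, which requires knowing that $J_G+J_{\mathcal{T}(\emptyset)}$ is radical, and for this it appeals to the squarefree Gr\"obner basis constructed in \cite[Lemma~3.5]{jaramillo2024connected}. Your Segre specialisation $\phi\colon x_i\mapsto su_i,\ y_i\mapsto tu_i$ sidesteps this entirely: since $\ker\phi=J_{\mathcal{K}_n}=P_\emptyset$ and $\phi(P_\mathcal{S})=(su_i,tu_i:i\in\mathcal{S})$ is monomial, any $f\in(J_G:P_\emptyset)\setminus J_G$ has $\phi(f)\ne 0$ and every surviving monomial has $u$-support meeting each nonempty minimal cut, hence lying in $\mathcal{D}_c(G)$; the bound $\deg f\ge\gamma_c(G)$ follows because the total $u$-degree of each such monomial equals $\deg f$. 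This is a cleaner, self-contained argument that avoids the external Gr\"obner computation and the radicality check altogether.

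One small notational wrinkle: when you write a monomial of $\phi(f)$ as $s^a t^b u_{i_1}\cdots u_{i_r}$ with distinct $i_j$, the $u$-part need not be squarefree (the exponent of $u_i$ is $\alpha_i+\beta_i$ coming from the preimage monomial $\prod x_i^{\alpha_i}y_i^{\beta_i}$). This does not affect the argument---the $u$-support still has size at most the $u$-degree, which equals $\deg f$---but you should write the monomial with exponents $u_{i_1}^{e_1}\cdots u_{i_r}^{e_r}$ to make the inequality $\deg f=\sum e_j\ge r$ transparent.
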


We now turn our attention to Question~\ref{question 2} in the case where $\mathcal{S}$ is a minimal $2$-cut, which, for simplicity, we shall refer to simply as a minimal cut; that is, a subset of vertices whose removal disconnects the graph, and which is minimal with respect to this property. For such a set $\mathcal{S}$, we compute $\VPJ$ explicitly. Moreover, we obtain a combinatorial interpretation of this invariant in terms of connected dominance. However, this does not coincide with the classical connected domination number, but rather introduces a new graph-theoretic notion of dominance specifically adapted to the setting of minimal cuts, which we proceed to define.

\begin{definition}(Definition~\ref{definition gACDij 21})
Let $\mathcal{S} \subset [n]$ be a minimal cut of $G$, and denote by $V_{1}$ and $V_{2}$ the vertex sets of the connected components of the induced subgraph on $[n] \setminus \mathcal{S}$. We define
\[
\mathcal{D}_{c}(V_{1}, V_{2}) = \left\{ A \subseteq V_{1} \cup V_{2} \,\middle|\, A \cap V_{i} \in \mathcal{D}_{c}\big(G_{V_{i} \cup \mathcal{S}}\big) \text{ for each } i = 1, 2 \right\},
\]
where $G_{V_{i} \cup \mathcal{S}}$ denotes the induced subgraph on $V_{i} \cup \mathcal{S}$. We then set
\[
\gamma_{c}(V_{1}, V_{2}) := \min \left\{ |A| : A \in \mathcal{D}_{c}(V_{1}, V_{2}) \right\}.
\]
\end{definition}

By applying our approach from Strategy~\ref{strategy} to the case where $\mathcal{S}$ is a minimal cut of $G$, we obtain the following result.

\begin{theoremC}(Theorem~\ref{main theorem})
Let $G$ be a connected graph on $[n]$ and let $\mathcal{S} \subset [n]$ be a minimal cut of $G$. Then,
\[
\VPJ = \gamma_{c}(V_{1}, V_{2}) = \min\left\{ \lvert A \rvert : A \in \mathcal{D}_{c}(V_{1}, V_{2}) \right\}.
\]
\end{theoremC}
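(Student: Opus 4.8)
The plan is to apply Strategy~\ref{strategy} to the case where $\mathcal{S}$ is a minimal cut, which by Theorem~\ref{corollary radical} (Theorem A) reduces the computation of $\VPJ$ to two tasks: first, identifying the transversal collection $\mathcal{T}(\mathcal{S})$ explicitly in terms of the combinatorics of $V_1$, $V_2$, and $\mathcal{S}$; and second, showing that $J_G + \JS$ is radical (or, failing that, directly analyzing $\sqrt{J_G + \JS}/J_G$). The key translation is to recognize that the quantity $\min\{|A_1| + 2|A_2| : A \in \TS\}$ from Theorem A should equal $\gamma_c(V_1, V_2)$. Since a minimal cut has $c(\mathcal{S}) = 2$, the matroid $M(\mathcal{S})$ is governed by the partition $\{V_1, V_2, \mathcal{S}\}$: a pair $\{i,j\}$ is dependent iff it meets $\mathcal{S}$ or lies inside one $V_i$, so the independent pairs are exactly those with one endpoint in $V_1$ and the other in $V_2$. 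The first main step is therefore to describe, for each $\mathcal{S}' \in \min(G)$, the set $\mathcal{D}(M(\mathcal{S}')) \setminus \mathcal{D}(M(\mathcal{S}))$, and then to characterize the transversals of the resulting family $\Delta_{\mathcal{S}}$.

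The second step is the combinatorial heart: I expect to prove that a transversal $A = A_1 \amalg A_2$ of $\Delta_{\mathcal{S}}$ of minimal weight $|A_1| + 2|A_2|$ corresponds to a set in $\mathcal{D}_c(V_1, V_2)$, by showing that the ``hitting'' condition forces $A$ to induce a connected dominating set on each of $G_{V_1 \cup \mathcal{S}}$ and $G_{V_2 \cup \mathcal{S}}$, and conversely that any such pair of connected dominating sets yields a transversal. The mechanism is that $\mathcal{S}'$ ranges over all other minimal cuts, and failing to be connected-dominating inside $V_i \cup \mathcal{S}$ would leave some dependency set of some $M(\mathcal{S}')$ unhit. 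Here I would lean on the structure theory of minimal primes of $J_G$ (Theorem~\ref{Theorem minimal primes}) and on the analogous argument already carried out for $\mathcal{S} = \emptyset$ in Theorem B, adapting it component-by-component: the point of the definition of $\mathcal{D}_c(V_1, V_2)$ is precisely that the minimal cut $\mathcal{S}$ splits the domination problem into two independent subproblems, one on each side glued to $\mathcal{S}$. The weight $|A_1| + 2|A_2|$ should collapse to a plain cardinality $|A|$ because the optimal transversal can be taken with $A_2 = \emptyset$ (or with each pair in $A_2$ replaceable by its two singletons without increasing the weight), reflecting that in $\mathcal{D}_c(V_1,V_2)$ one counts vertices, not edges.

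The third step is establishing that $J_G + \JS$ is radical when $\mathcal{S}$ is a minimal cut, so that equality in Theorem A holds; this is where I expect the main obstacle. One route is to produce a Gröbner basis or a primary decomposition argument showing $J_G + \JS = (J_G : P_{\mathcal{S}}) = \bigcap_{\mathcal{S}' \neq \mathcal{S}} P_{\mathcal{S}'}$ directly, using Theorem~\ref{theorem transversal} which already identifies $(J_G : P_{\mathcal{S}})$ with $\sqrt{J_G + \JS}$; the content is removing the radical. A cleaner route, which I would attempt first, avoids proving radicality altogether: combine the upper bound $\VPJ \leq \gamma_c(V_1,V_2)$ from Theorem A with a matching lower bound obtained by a direct degree argument --- any homogeneous $f$ with $(J_G : f) = P_{\mathcal{S}}$ must, after reducing modulo each $P_{\mathcal{S}'}$ and analyzing its leading behavior on the two sides of the cut, have degree at least $\gamma_c(V_1,V_2)$, because its nonvanishing modulo $P_{\mathcal{S}}$ combined with vanishing modulo every other $P_{\mathcal{S}'}$ encodes exactly a connected-dominating configuration on each $V_i \cup \mathcal{S}$. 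The delicate point throughout will be bookkeeping the interaction between the two components across the shared set $\mathcal{S}$: ensuring that the monomials/binomials witnessing domination on the $V_1$-side and on the $V_2$-side can be chosen with disjoint enough supports that their product still has the predicted degree and still generates the right colon ideal. I would handle this by first treating the case where $\mathcal{S}$ is a single vertex (a cut vertex), where the gluing is transparent, and then bootstrapping to general minimal cuts.
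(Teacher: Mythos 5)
Your overall strategy lines up with the paper's: apply Theorem~\ref{corollary radical}, characterize $\mathcal{T}(\mathcal{S})$ combinatorially in terms of connected domination on $G_{V_1\cup\mathcal{S}}$ and $G_{V_2\cup\mathcal{S}}$, and then establish radicality of $J_G + \JS$ to get equality rather than just an upper bound. That is exactly the architecture of Section~4. But two points in the middle and end of your plan are off.

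First, the claim that ``the optimal transversal can be taken with $A_2 = \emptyset$'' is wrong, and the paper's Proposition~\ref{characterization TS}(ii) makes this explicit: since $\emptyset \in \min(G)$, the difference $\mathcal{D}(M(\emptyset)) \setminus \mathcal{D}(M(\mathcal{S}))$ consists precisely of the cross pairs $\{i,j\}$ with $i \in V_1$, $j \in V_2$, and these can only be hit by two-element members of $A$; hence every $A \in \mathcal{T}(\mathcal{S})$ is forced to contain at least one such pair, so $A_2 \neq \emptyset$ always. The degree $|A_1| + 2|A_2|$ still collapses to $|\supp(A)|$ (take elements pairwise disjoint and use exactly one cross pair), but the mechanism is opposite to what you describe: you cannot delete the pair, you need it. If you proceeded as written you would at this step be proving the wrong characterization of $\mathcal{T}(\mathcal{S})$.

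Second, and more seriously, the radicality step is where the bulk of the paper's work is, and your proposal leaves it as two sketched alternatives, neither of which is carried out and neither of which would go through as described. The paper's route is not merely ``produce a Gr\"obner basis'': the generating set $\{g_{A,C,D}\}$ of $\JS$ is too unwieldy, so Lemma~\ref{concise generating set} first replaces it by the much smaller family $g^{i,j}_{A,C,D}$ via a nontrivial induction on $|B_2|$; Assumption~\ref{assumption 2} then relabels $[n]$ so that $V_1 < \mathcal{S} < V_2$; and Proposition~\ref{proposition grobner basis JG+JS} runs Buchberger's criterion over several delicate cases, repeatedly invoking the ``connected through $B$ implies $g_{C,D} f_{i,j} \in J_G$'' fact (Proposition~\ref{grobner basis for JG}(ii)). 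Your alternative degree-bound route, bypassing radicality, has no worked-out mechanism for the lower bound; in the paper a degree argument of that flavor does appear, but only for the cycle graphs in Section~5, and even there it still rests on initial-ideal analysis coming from Gr\"obner bases (Lemmas~\ref{saturation with xi}, \ref{saturation with f}, and Proposition~\ref{proposition dey}). Finally, the ``treat a cut vertex first and bootstrap'' idea has no analogue in the paper and is not an obvious reduction: minimal cuts of size $\geq 2$ do not decompose into cut-vertex subproblems. You have correctly identified where the difficulty lies, but the proposal stops short of resolving it.
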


We also address the computation of the v-number in the case where $G$ is the cycle graph $C_n$, see Definition~\ref{definition cycle graph}. It was established in~\textup{\cite[Corollary~4.10]{dey2024v}} that $\mathrm{v}(J_{C_n}) \leq \textstyle \left\lceil \frac{2n}{3} \right\rceil$, and it was conjectured in the same work that the equality holds for $n \geq 6$. By applying our approach from Strategy~\ref{strategy}, together with several additional techniques tailored to this setting, we provide an almost complete confirmation of this conjecture: we prove that $\VCNN$ is either $\textstyle \left\lceil \frac{2n}{3} \right\rceil$ or $\textstyle \left\lceil \frac{2n}{3} \right\rceil - 1$.

\begin{theoremD}(Theorem~\ref{main theorem 2})
The v-number of $C_n$ satisfies the following:
\begin{enumerate}
\item[{\rm (i)}] If $n \equiv 0 \pmod{3}$, then $\VCNN = \textstyle \frac{2n}{3}$.
\item[{\rm (ii)}] If $n \equiv 1, 2 \pmod{3}$, then $\VCNN$ is either $\textstyle \left\lceil \frac{2n}{3} \right\rceil$ or $\textstyle \left\lceil \frac{2n}{3} \right\rceil - 1$.
\end{enumerate}
\end{theoremD}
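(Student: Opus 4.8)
The plan is to follow Strategy~\ref{strategy} adapted to the cycle, combining the general transversal framework (Theorem~\ref{corollary radical 2}) with a careful combinatorial analysis of $\mathcal{T}(\emptyset)$ and $\mathcal{T}(\mathcal{S})$ for cycle graphs. First I would handle the case $n \equiv 0 \pmod 3$ exactly: by Theorem~\ref{theoremB}, $\mathrm{v}_{J_{\mathcal{K}_n}}(J_{C_n}) = \gamma_c(C_n)$, and it is standard that $\gamma_c(C_n) = n - 2$ (remove two adjacent vertices to get a connected dominating path). Comparing $n-2$ with $\lceil 2n/3 \rceil$, one sees that for small $n$ these differ, so the minimum $\mathrm{v}(J_{C_n}) = \min_{\mathcal{S}} \mathrm{v}_{P_\mathcal{S}}(J_{C_n})$ must be realized at a nonempty minimal cut $\mathcal{S}$. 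So the heart of the proof is to analyze $\mathrm{v}_{P_\mathcal{S}}(J_{C_n})$ for $\mathcal{S}$ a minimal cut — for a cycle, a minimal $2$-cut is a pair of non-adjacent vertices, say splitting $C_n$ into two paths $V_1$ and $V_2$ — and invoke Theorem~\ref{theoremC}: $\mathrm{v}_{P_\mathcal{S}}(J_{C_n}) = \gamma_c(V_1, V_2)$, the minimum of $|A_1| + |A_2|$ where $A_i$ must be a connected dominating set of the path-like induced subgraph $G_{V_i \cup \mathcal{S}}$.

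Next I would reduce $\gamma_c(V_1, V_2)$ to an optimization over how the cycle is split. Writing $C_n$ with the cut $\mathcal{S} = \{u, v\}$ splitting the remaining $n-2$ vertices into arcs of sizes $a$ and $b$ with $a + b = n - 2$, the induced subgraph $G_{V_i \cup \mathcal{S}}$ is a path on $a + 2$ (resp. $b+2$) vertices, and the connected domination number of a path $P_m$ is $\max(m - 2, 1)$ (or $0$ if $m \le 2$); one must be slightly careful about the endpoints $u, v$ being forced or not. Summing the two contributions and minimizing over all valid splittings $(a,b)$ of $n - 2$ should produce a closed form. The arithmetic here is where the parity/residue of $n$ modulo $3$ enters: I expect that for $n \equiv 0 \pmod 3$ the optimal split yields exactly $2n/3$, matching the claimed value and also matching the upper bound $\lceil 2n/3 \rceil$ from \cite[Corollary~4.10]{dey2024v}, while for $n \equiv 1, 2 \pmod 3$ the best achievable over minimal cuts lands at $\lceil 2n/3 \rceil - 1$, and one separately must show no associated prime does better, giving the lower bound $\lceil 2n/3 \rceil - 1$ in general; together with the known upper bound $\lceil 2n/3 \rceil$ this yields part (ii).

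For the lower bound direction — showing $\mathrm{v}(J_{C_n}) \ge \lceil 2n/3 \rceil - 1$ — I would argue that for \emph{every} $\mathcal{S} \in \min(C_n)$ (including $\mathcal{S} = \emptyset$), $\mathrm{v}_{P_\mathcal{S}}(J_{C_n}) \ge \lceil 2n/3 \rceil - 1$. For $\mathcal{S} = \emptyset$ this is $\gamma_c(C_n) = n - 2 \ge \lceil 2n/3 \rceil - 1$ for $n \ge 3$, which is a trivial inequality. For $\mathcal{S}$ a minimal cut, it reduces to showing $\gamma_c(V_1, V_2) \ge \lceil 2n/3 \rceil - 1$ for all splittings, i.e., $\max(a-1,1) + \max(b-1,1) \ge \lceil 2(a+b+2)/3 \rceil - 1$ with $a + b = n - 2$; this is an elementary (if slightly fiddly) case analysis on which of $a, b$ are $\le 2$. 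For $n \equiv 0 \pmod 3$ one needs the sharper statement that the minimum over all $\mathcal{S}$ is \emph{exactly} $2n/3$, which requires both the matching upper bound construction (exhibit a specific cut achieving it, or cite \cite[Corollary~4.10]{dey2024v}) and the lower bound just described, now with the residue condition making $\lceil 2n/3 \rceil - 1 = 2n/3 - 1$ insufficient — so I'd need to verify that no cut achieves $2n/3 - 1$ when $3 \mid n$, which again follows from the explicit formula for $\gamma_c(V_1,V_2)$.

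The main obstacle I anticipate is \textbf{pinning down $\gamma_c(V_1, V_2)$ precisely for a path split into two sub-paths sharing the two cut vertices}: the definition requires $A \cap V_i$ to be a connected dominating set of $G_{V_i \cup \mathcal{S}}$, and there are subtle boundary effects depending on whether $u$ or $v$ must belong to $A$, whether the two components can "share" domination duty at the cut vertices, and small cases ($a$ or $b$ equal to $0, 1, 2$) where the $\max(\cdot, 1)$ truncation and the possibility of an empty connected dominating set behave irregularly. Getting the closed-form optimum right — and in particular explaining the residual ambiguity in case (ii), i.e. why the transversal/domination bound gives $\lceil 2n/3 \rceil - 1$ but the true value might be one larger because $J_G + J_{\mathcal{T}(\mathcal{S})}$ need not be radical — is where the real work lies; the rest is assembling inequalities and invoking Theorems~\ref{theoremB} and~\ref{theoremC}.
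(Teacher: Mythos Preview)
Your plan has a fundamental gap: you treat $\min(C_n)$ as consisting only of $\emptyset$ and minimal $2$-cuts (pairs of non-adjacent vertices), but for the cycle every set of $k\geq 2$ pairwise non-adjacent vertices is a minimal $k$-cut, and these are the associated primes that actually matter. Theorem~(C) applies only to minimal $2$-cuts, so it cannot touch the primes with $|\mathcal{S}|\geq 3$. Worse, for a minimal $2$-cut $\mathcal{S}=\{u,v\}$ of $C_n$, the induced graph $G_{V_i\cup\mathcal{S}}$ is a path whose two endpoints are $u$ and $v$; since $A\cap V_i$ must lie inside $V_i$, dominate both endpoints, and be connected, one is forced to take $A\cap V_i=V_i$. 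Hence $\gamma_c(V_1,V_2)=|V_1|+|V_2|=n-2$ for \emph{every} minimal $2$-cut, not $\max(a-1,1)+\max(b-1,1)$. So your optimization over splittings never goes below $n-2$, and for $n\geq 9$ this exceeds $\lceil 2n/3\rceil$: the value $2n/3$ is achieved at a minimal $k$-cut with $k\approx n/3$ (e.g.\ $\mathcal{S}=\{1,4,7,\dots\}$ when $3\mid n$), where Theorem~(C) is silent.

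The paper's proof therefore takes a completely different route for $|\mathcal{S}|\geq 3$. It does not reduce to a domination number; instead it builds a tailored Gr\"obner basis for $J_{C_n}+I_{\mathcal{S}}$ under a carefully chosen monomial order (an ``$\mathcal{S}$-consistent'' relabelling), and combines this with initial-ideal inclusions for $(J_{C_n}:x_i)$ and $(J_{C_n}:f_{i-1,i+1})$ to extract lower bounds on $\mathrm{v}_{P_{\mathcal{S}}}(J_{C_n})$ of the form $n-|\mathcal{S}|$ or $n-|C_2(\mathcal{S})|-2$, where $C_2(\mathcal{S})$ counts the arcs of $[n]\setminus\mathcal{S}$ of length $\geq 2$. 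Minimizing these bounds over all $\mathcal{S}$ via the elementary inequality $n\geq 2|C_1(\mathcal{S})|+3|C_2(\mathcal{S})|$ is what produces the $\lceil 2n/3\rceil$ and $\lceil 2n/3\rceil-1$ thresholds. None of this machinery is available from Theorems~(B) and~(C) alone, so your outline as written cannot be completed.
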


\smallskip

\noindent
\textbf{Outline.} Section~\ref{preliminaries} reviews the necessary background on binomial edge ideals, v-numbers, and matroids. In Section~\ref{section transversals}, we present a new approach to compute v-numbers, based on identifying transversals in families of subsets arising from dependencies in certain rank-two matroids. In Section~\ref{section minimal cuts}, we explicitly compute the localized v-numbers of $J_G$ at the minimal primes corresponding to minimal cuts of $G$. Finally, in Section~\ref{section v numbers of cycle graphs}, we establish that the v-number of the cycle graph $C_n$ is either $\textstyle \left\lceil \frac{2n}{3} \right\rceil$ or $\textstyle \left\lceil \frac{2n}{3} \right\rceil - 1$.

\section{Preliminaries}\label{preliminaries}

In this section, we gather several well-known properties and results concerning binomial edge ideals, v-numbers and matroids that will be relevant in the subsequent sections. For simplicity, we work over the field of complex numbers $\mathbb{C}$ throughout.

\subsection{Primary decomposition of binomial edge ideals}

\begin{comment}
Binomial edge ideals were introduced independently by Herzog, Hibi, Hreinsdottir, Kahle, and Rauh \cite{herzog2010binomial}, and by Ohtani \cite{ohtani2011graphs}, as a means of associating algebraic structures to graphs. Since their introduction, significant effort has been devoted to understanding the connections between the combinatorial invariants of a graph and the algebraic properties of its binomial edge ideal. Today, these ideals are a well-established topic in combinatorial commutative algebra and have also found important applications in the growing area of algebraic statistics, where they appear as ideals generated by conditional independence statements (see \cite{herzog2010binomial}).

\medskip

Let $S=\CC[x_1,\ldots, x_n, y_1,\ldots, y_n]$ be a polynomial ring, and let $G$ be a simple
graph the vertex set $V(G)=[n]$ with set of
edges $E(G)$. The {\em binomial edge ideal} of $G$, denoted by $J_{G}$, is the quadratic binomial ideal defined by:

\[J_{G}:=(f_{i,j}:=x_{i}y_{j}-x_{j}y_{i}:\{i,j\}\in E(G),i<j).\]

In other words, $J_{G}$ is the ideal generated by the $2$-minors of the generic matrix
\[X=\begin{pmatrix}
x_{1} & x_{2} & x_{3} & \cdots & x_{n}\\
y_{1} & y_{2} & y_{3} & \cdots & y_{n}
\end{pmatrix}\]
corresponding to the pairs of columns indexed by the edges of $G$. 
\end{comment}

Let $G$ be a simple graph on $[n]$, and let $J_{G}$ denote its binomial edge ideal, as introduced in Definition~\ref{definition binomial edge}. When $G=\mathcal{K}_{n}$ is the complete graph on $n$ vertices, the binomial edge ideal $J_{\mathcal{K}_{n}}$ coincides with the ideal of $2$-minors of the matrix $X$ from Equation~\eqref{matrix X}. This observation highlights that binomial edge ideals naturally generalize classical determinantal ideals.

In \cite{herzog2010binomial} the authors prove that binomial edge ideals are radical and they give a combinatorial description of their associated primes. 

\begin{definition}\label{PS}
Given a subset $\mathcal{S}\subset [n]$, we define a prime ideal $P_{\mathcal{S}}$ as follows. Denote by $G_{1},\ldots,G_{c(\mathcal{S})}$ the connected components of the induced subgraph $G_{[n]\setminus \mathcal{S}}$. For each $G_{i}$, we denote by $\widetilde{G}_{i}$ the complete graph on $V(G_{i})$ and we set
\[P_{\mathcal{S}}:=(x_{i},y_{i}:i\in \mathcal{S})+(J_{\widetilde{G}_{1}},\ldots,J_{\widetilde{G}_{c(S)}}).\]
\end{definition}

This ideal is prime and all the associated primes of $J_{G}$ are of this form:

\begin{theorem}\textup{\cite[Theorem~3.2]{herzog2010binomial}}\label{theorem decomposition}
Let $G$ be a simple graph on $[n]$. Then $J_{G}=\cap_{\mathcal{S}\subset [n]}P_{\mathcal{S}}$.
\end{theorem}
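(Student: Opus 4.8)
The plan is to prove the two inclusions $J_{G} \subseteq \bigcap_{\mathcal{S} \subseteq [n]} P_{\mathcal{S}}$ and $\bigcap_{\mathcal{S} \subseteq [n]} P_{\mathcal{S}} \subseteq J_{G}$ separately. The first is elementary. Fix $\mathcal{S} \subseteq [n]$ and an edge $\{i,j\} \in E(G)$ with $i < j$. If $i \in \mathcal{S}$ (or $j \in \mathcal{S}$), then $x_{i}, y_{i} \in P_{\mathcal{S}}$ and hence $f_{i,j} = x_{i}y_{j} - x_{j}y_{i} \in P_{\mathcal{S}}$. If $i, j \notin \mathcal{S}$, then since $\{i,j\}$ is an edge its two endpoints lie in a common connected component $G_{k}$ of $G_{[n]\setminus\mathcal{S}}$, so $f_{i,j}$ is one of the $2$-minors generating $J_{\widetilde{G}_{k}} \subseteq P_{\mathcal{S}}$. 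Hence $J_{G} \subseteq P_{\mathcal{S}}$ for every $\mathcal{S}$, so $J_{G} \subseteq \bigcap_{\mathcal{S}} P_{\mathcal{S}}$.

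For the reverse inclusion I would proceed in three steps. First, each $P_{\mathcal{S}}$ is prime: modulo $(x_{i}, y_{i} : i \in \mathcal{S})$, the ideal $P_{\mathcal{S}}$ becomes the sum $J_{\widetilde{G}_{1}} + \cdots + J_{\widetilde{G}_{c(\mathcal{S})}}$ of ideals of $2$-minors of generic $2 \times |V(G_{k})|$ matrices in pairwise disjoint sets of variables, so $S/P_{\mathcal{S}}$ is a tensor product over $\mathbb{C}$ of coordinate rings of generic determinantal varieties; each of these is a domain (classical for determinantal ideals of generic matrices), and a tensor product over $\mathbb{C}$ of finitely generated $\mathbb{C}$-algebra domains is again a domain. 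Second, I would prove the set-theoretic equality $\mathbf{V}(J_{G}) = \bigcup_{\mathcal{S} \subseteq [n]} \mathbf{V}(P_{\mathcal{S}})$ in $\mathbb{C}^{2n}$: the inclusion $\supseteq$ is immediate from $J_{G} \subseteq P_{\mathcal{S}}$, and for $\subseteq$ one takes $p \in \mathbf{V}(J_{G})$, writes $v_{i} := (a_{i}, b_{i})$ for the evaluation of the $i$-th column of $X$, and notes that the equations of $J_{G}$ say $v_{i}, v_{j}$ are linearly dependent for each edge $\{i,j\}$. Setting $\mathcal{S} := \{ i \in [n] : v_{i} = 0 \}$, every vertex of $G_{[n]\setminus\mathcal{S}}$ carries a nonzero vector, and following an edge-path inside a connected component $G_{k}$ of $G_{[n]\setminus\mathcal{S}}$ forces all the $v_{i}$ with $i \in V(G_{k})$ onto a single line through the origin; hence every $2$-minor on the columns indexed by $V(G_{k})$ vanishes at $p$, so $p \in \mathbf{V}(J_{\widetilde{G}_{k}})$ for all $k$, while $a_{i} = b_{i} = 0$ for $i \in \mathcal{S}$, and therefore $p \in \mathbf{V}(P_{\mathcal{S}})$. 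By the Nullstellensatz over $\mathbb{C}$ and primeness of the $P_{\mathcal{S}}$, this gives $\sqrt{J_{G}} = \bigcap_{\mathcal{S}} \sqrt{P_{\mathcal{S}}} = \bigcap_{\mathcal{S}} P_{\mathcal{S}}$.

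The third step --- showing that $J_{G}$ is radical --- is the one I expect to be the main obstacle and the only genuinely technical part. The plan is to produce a term order for which $\operatorname{in}_{<}(J_{G})$ is squarefree: with the lexicographic order induced by $x_{1} > \cdots > x_{n} > y_{1} > \cdots > y_{n}$, one shows that the binomials attached to the admissible paths of $G$ form a Gröbner basis of $J_{G}$ whose leading monomials are all squarefree, the verification of the Gröbner basis property (reduction of the relevant $S$-polynomials to zero) being the combinatorial core. Granting this, $\operatorname{in}_{<}(J_{G})$ is a squarefree monomial ideal, hence radical; since $\operatorname{in}_{<}(\sqrt{J_{G}}) \subseteq \sqrt{\operatorname{in}_{<}(J_{G})} = \operatorname{in}_{<}(J_{G}) \subseteq \operatorname{in}_{<}(\sqrt{J_{G}})$, the ideals $J_{G}$ and $\sqrt{J_{G}}$ have the same initial ideal and therefore coincide, so $J_{G}$ is radical. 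Combined with $\sqrt{J_{G}} = \bigcap_{\mathcal{S}} P_{\mathcal{S}}$ from the second step, this yields $J_{G} = \bigcap_{\mathcal{S} \subseteq [n]} P_{\mathcal{S}}$. (One may afterwards discard the redundant $P_{\mathcal{S}}$ --- keeping only $\mathcal{S} = \emptyset$ and the minimal cuts --- to obtain the minimal primary decomposition used elsewhere in the paper, but this refinement is not needed for the stated identity.)
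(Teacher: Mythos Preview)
The paper does not give its own proof of this statement: it is quoted in the Preliminaries section as \cite[Theorem~3.2]{herzog2010binomial} with no argument, so there is nothing to compare against directly. That said, your outline is correct and is essentially the strategy of the original Herzog--Hibi--Hreinsd\'ottir--Kahle--Rauh proof: the easy inclusion $J_G\subseteq P_{\mathcal S}$, primeness of each $P_{\mathcal S}$ via the tensor-product-of-determinantal-rings argument, the set-theoretic decomposition of $\mathbb V(J_G)$ by reading off the matroid of the column vectors, and radicality of $J_G$ from a squarefree initial ideal. The present paper itself invokes exactly these ingredients elsewhere (radicality as \cite[Corollary~2.2]{herzog2010binomial}, the admissible-path Gr\"obner basis as Proposition~\ref{grobner basis for JG}(i)), so your sketch is entirely consistent with how the result is used here.

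The only place where your write-up is a sketch rather than a proof is the Gr\"obner basis step: verifying Buchberger's criterion for the admissible-path binomials is the genuine combinatorial work, and you correctly flag it as such. Everything else is complete.
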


Throughout the paper, we assume that the graph $G$ is connected.

\begin{definition}\label{minimal cuts}
A vertex $i \in [n]$ is called a \emph{cut point} of a graph $H$ if $H$ has fewer connected components than $H_{[n] \setminus \{i\}}$.  

For a graph $G$ on $[n]$, a nonempty subset $\mathcal{S} \subsetneq [n]$ is said to be a \emph{minimal $k$-cut} of $G$ if the induced subgraph $G_{[n] \setminus \mathcal{S}}$ has exactly $k$ connected components, and each $i \in \mathcal{S}$ is a cut point of the graph $G_{([n] \setminus \mathcal{S}) \cup \{i\}}$.  
In the case $k = 2$, we simply refer to $\mathcal{S}$ as a \emph{minimal cut}.
\end{definition}

As the following result shows, among all $\mathcal{S}$, the minimal primes of $J_{G}$ are those corresponding to minimal $k$-cuts.

\begin{theorem}\textup{\cite[Corollary~3.9]{herzog2010binomial}}\label{Theorem minimal primes}
Let $G$ be a connected graph on $[n]$, and $\mathcal{S}\subset [n]$. Then $P_{\mathcal{S}}$ is a minimal prime if and only if $\mathcal{S}=\emptyset$ or $\mathcal{S}$ is a minimal $k$-cut for some $k\geq 2$.
\end{theorem}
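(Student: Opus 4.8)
The plan is to read off the minimal primes directly from the primary decomposition $J_G=\bigcap_{\mathcal S\subset[n]}P_{\mathcal S}$ of Theorem~\ref{theorem decomposition}. Since every prime containing $J_G$ contains one of the $P_{\mathcal S}$, and since the degree-one part of $P_{\mathcal S}$ is exactly $\sspan\{x_i,y_i:i\in\mathcal S\}$ (so $\mathcal S\mapsto P_{\mathcal S}$ is injective and $x_i\in P_{\mathcal S}\iff i\in\mathcal S$), the minimal primes of $J_G$ are precisely those $P_{\mathcal S}$ that do not properly contain any $P_{\mathcal T}$. Thus everything reduces to understanding the inclusion order among the $P_{\mathcal S}$. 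Comparing degree-one parts already shows $P_{\mathcal T}\subseteq P_{\mathcal S}\implies\mathcal T\subseteq\mathcal S$. The technical heart is a membership criterion for the binomials $f_{a,b}=x_ay_b-x_by_a$: writing $P_{\mathcal S}=(x_i,y_i:i\in\mathcal S)+J'$, where $J'$ is the binomial edge ideal, in the remaining $2(n-|\mathcal S|)$ variables, of the disjoint union of the complete graphs on the connected components of $G_{[n]\setminus\mathcal S}$, I claim $f_{a,b}\in P_{\mathcal S}$ if and only if $\{a,b\}\cap\mathcal S\neq\emptyset$ or $a,b$ lie in the same component of $G_{[n]\setminus\mathcal S}$. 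The only case needing an argument is the ``only if'' when $a,b\notin\mathcal S$ lie in different components: assign to the matrix $X$ the column $(1,0)^{\mathsf T}$ on one of the two components, $(0,1)^{\mathsf T}$ on the other, pairwise proportional columns on the remaining components, and $0$ on $\mathcal S$; this point lies in $V(P_{\mathcal S})$, but $f_{a,b}$ does not vanish on it, so $f_{a,b}\notin P_{\mathcal S}$.

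Granting this criterion, the case $\mathcal S=\emptyset$ is immediate: for $n\geq 2$ the determinantal ideal $P_\emptyset=J_{\mathcal K_n}$ is generated in degree $2$ and hence contains no $x_i$, so no $P_{\mathcal T}$ with $\mathcal T\neq\emptyset$ is contained in it and $P_\emptyset$ is minimal ($n=1$ is trivial). For $\mathcal S\neq\emptyset$, observe that $\mathcal S$ fails to be a minimal $k$-cut for every $k\geq2$ exactly when some $i\in\mathcal S$ is \emph{not} a cut point of $G':=G_{([n]\setminus\mathcal S)\cup\{i\}}$ — the value $k=c(\mathcal S)$ being automatically $\geq 2$ as soon as every $i\in\mathcal S$ is a cut point. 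If such an $i$ exists, then in $G'$ the vertex $i$ is either isolated or joined to a single component of $G_{[n]\setminus\mathcal S}$, and in both cases a direct comparison of generators yields $P_{\mathcal S\setminus\{i\}}\subseteq P_{\mathcal S}$ (the only generators of $P_{\mathcal S\setminus\{i\}}$ not already among those of $P_{\mathcal S}$ are binomials $f_{i,b}$, which lie in $(x_i,y_i)\subseteq P_{\mathcal S}$); this inclusion is strict since $x_i\in P_{\mathcal S}\setminus P_{\mathcal S\setminus\{i\}}$, so $P_{\mathcal S}$ is not minimal.

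Conversely, let $\mathcal S$ be a minimal $k$-cut, so every $i\in\mathcal S$ is a cut point of $G_{([n]\setminus\mathcal S)\cup\{i\}}$, and let $\mathcal T\subsetneq\mathcal S$; I must produce an element of $P_{\mathcal T}$ not in $P_{\mathcal S}$. Fix $i\in\mathcal S\setminus\mathcal T$. Being a cut point, $i$ has neighbours $a$ and $b$ lying in two distinct components of $G_{[n]\setminus\mathcal S}$; then $a$ and $b$ are connected by the path $a - i - b$ inside $G_{[n]\setminus\mathcal T}$ (all three of $a,b,i$ lie in $[n]\setminus\mathcal T$), so $f_{a,b}$ lies in the binomial-edge part of $P_{\mathcal T}$. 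But $a,b\notin\mathcal S$ lie in different components of $G_{[n]\setminus\mathcal S}$, so by the criterion $f_{a,b}\notin P_{\mathcal S}$; hence $P_{\mathcal T}\not\subseteq P_{\mathcal S}$, and $P_{\mathcal S}$ is minimal.

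The main obstacle is the membership criterion for the $f_{a,b}$ — equivalently, having a sufficiently concrete description of $P_{\mathcal S}$ (as a binomial edge ideal of complete graphs together with coordinate hyperplanes) to decide when a quadric lies in it; once that is available, the rest is bookkeeping with cut points. As a consistency check one can compute $\operatorname{ht}P_{\mathcal S}=n+|\mathcal S|-c(\mathcal S)$ (using that the ideal of $2$-minors of a generic $2\times m$ matrix has height $m-1$) and verify that deleting a non-cut-point from $\mathcal S$ strictly decreases this quantity, matching the strict containment found above; however, this numerical information does not by itself single out the minimal primes, since $J_G$ need not be unmixed, so the containment analysis cannot be avoided.
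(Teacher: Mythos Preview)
The paper does not give its own proof of this statement: it is quoted verbatim as \cite[Corollary~3.9]{herzog2010binomial} and used as background. Your argument is correct and is essentially the same one given in the original reference: extract the minimal primes as the minimal elements among the $P_{\mathcal S}$ appearing in Theorem~\ref{theorem decomposition}, reduce the inclusion order via the degree-one part to the case $\mathcal T\subseteq\mathcal S$, and decide the remaining inclusions by the membership criterion for the binomials $f_{a,b}$, which you justify by an explicit point on $V(P_{\mathcal S})$.
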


\subsection{V-numbers}

Let $S=\CC[x_{1},\ldots,x_{n}]$ be the polynomial ring in $n$ variables with the standard grading. Recall the definition of the v-number $\textup{v}(I)$ from Definition~\ref{first definition}. Our primary focus is to investigate Question~\ref{question 2}. To this end, we collect several results from \cite{grisalde2021induced} that will be fundamental for our purposes.

\begin{theorem}\textup{\cite[Theorem~10]{grisalde2021induced}}\label{results v number}
Let $I\subset S$ be a homogeneous ideal and let $\mathfrak{p}\in \textup{Ass}(I)$. The following hold.
\begin{enumerate}
\item[{\rm (i)}] If $\mathcal{G}=\{\overline{g}_{1},\ldots,\overline{g}_{r}\}$ is a homogeneous minimal generating set of $(I:\mathfrak{p})/I$, then
\[\textup{v}_{\mathfrak{p}}(I)=\min\{\deg(g_{i}):i\in [r]\ \text{and $(I:g_{i})=\mathfrak{p}$}\}.\]
\item[{\rm (ii)}] If $I$ has no embedded primes, then
\[
\textup{v}_{\mathfrak{p}}(I) = \alpha\left( (I : \mathfrak{p}) / I \right),
\]
where for any module $M$, we define $\alpha(M) := \min\{ \deg(f) : f \in M \setminus \{0\} \}$.
\end{enumerate}
\end{theorem}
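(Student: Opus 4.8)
The plan is to reduce both statements to the structure of the finitely generated graded module $M := (I:\mathfrak{p})/I$, regarded as a module over the graded domain $R := S/\mathfrak{p}$. Here $\mathfrak{p}$ is a proper graded ideal, hence contained in the irrelevant maximal ideal $\mathfrak{m}=(x_{1},\dots,x_{n})$, and $\mathfrak{p}$ annihilates $M$, so $M$ is genuinely an $R$-module; by graded Nakayama a homogeneous minimal generating set of $M$ over $S$ is one over $R$ as well. Let $T\subseteq M$ be the graded torsion submodule, which makes sense since $R$ is a domain. The first step I would establish is a translation lemma: for homogeneous $f\in S$, $(I:f)=\mathfrak{p}$ if and only if $\bar{f}\in M$ is nonzero and $\bar{f}\notin T$. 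This is a matter of unwinding definitions — $f\in(I:\mathfrak{p})\iff\mathfrak{p}\subseteq(I:f)$, $f\notin I\iff(I:f)\ne S$, and $\operatorname{Ann}_{R}(\bar f)=(I:f)/\mathfrak{p}$, so $\operatorname{Ann}_{R}(\bar f)=0\iff(I:f)=\mathfrak{p}$, while over the domain $R$ the vanishing of $\operatorname{Ann}_{R}(\bar f)$ is precisely the statement $\bar f\notin T$. Passing to minimal degrees, this identifies $\textup{v}_{\mathfrak{p}}(I)$ with $\alpha(M/T)$, and $M/T\ne 0$ because $\mathfrak{p}\in\textup{Ass}(I)$ guarantees some nonzero non-torsion class.

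For part~(i), I would fix a homogeneous minimal generating set $\bar{g}_{1},\dots,\bar{g}_{r}$ of $M$. Their images generate $M/T$, and after discarding those $\bar{g}_{i}$ that lie in $T$, the remaining images form a generating set of $M/T$ by nonzero homogeneous elements. Each surviving $\bar{g}_{i}$ then satisfies $\deg g_{i}\ge\alpha(M/T)=\textup{v}_{\mathfrak{p}}(I)$; conversely, $M/T$ has a nonzero element in degree $\alpha(M/T)$, which is an $R$-linear combination of these images, forcing some surviving generator to have degree $\le\alpha(M/T)$. Hence $\min\{\deg g_{i}:\bar{g}_{i}\notin T\}=\textup{v}_{\mathfrak{p}}(I)$, and by the translation lemma $\bar{g}_{i}\notin T$ is equivalent to $(I:g_{i})=\mathfrak{p}$, which is exactly the asserted formula. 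I expect this comparison to be the main obstacle: the homogeneous element realizing $\textup{v}_{\mathfrak{p}}(I)$ need not itself be a minimal generator, so the proof cannot proceed directly and instead relies on the fact that a generating set of $M/T$ cannot lie entirely in degrees exceeding $\alpha(M/T)$, together with the bookkeeping needed to pass between minimal generating sets of $M$ and of $M/T$.

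For part~(ii), it suffices to prove that the absence of embedded primes forces $T=0$; then $\textup{v}_{\mathfrak{p}}(I)=\alpha(M/T)=\alpha(M)=\alpha\big((I:\mathfrak{p})/I\big)$. Suppose $\bar{f}\in T$ is nonzero. Then $r\bar{f}=0$ for some homogeneous $r\notin\mathfrak{p}$, so $(I:f)\supsetneq\mathfrak{p}$. Since $(I:f)=\operatorname{Ann}_{S/I}(\bar f)$ and $S/(I:f)$ embeds as the submodule $S\bar{f}\subseteq S/I$, every minimal prime of $(I:f)$ lies in $\textup{Ass}(S/(I:f))\subseteq\textup{Ass}(S/I)=\textup{Ass}(I)$; any such prime contains $(I:f)$ and hence strictly contains $\mathfrak{p}$, exhibiting $\mathfrak{p}$ as an embedded prime — a contradiction. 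Therefore $T=0$ and the formula follows. In the paper's applications $I=J_{G}$ is radical, hence has no embedded primes, so part~(ii) applies directly.
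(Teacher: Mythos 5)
The paper does not prove this statement; it cites it directly from \cite{grisalde2021induced} (Theorem~10), so there is no in-paper proof to compare against. Your reconstruction via the torsion submodule $T$ of $M=(I:\mathfrak{p})/I$ over the graded domain $R=S/\mathfrak{p}$ is correct and complete. The translation lemma (for homogeneous $f$, $(I:f)=\mathfrak{p}$ iff $\bar f\ne 0$ and $\bar f\notin T$) is properly justified via $\operatorname{Ann}_R(\bar f)=(I:f)/\mathfrak{p}$, and it cleanly recasts $\textup{v}_{\mathfrak{p}}(I)$ as $\alpha(M/T)$. For~(i), the two-sided comparison between $\alpha(M/T)$ and the degrees of the non-torsion members of the generating set is airtight; in fact, as you set it up, minimality of $\mathcal{G}$ is not actually needed, only that $\mathcal{G}$ generates. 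For~(ii), the argument that a nonzero torsion class $\bar f$ forces $(I:f)\supsetneq\mathfrak{p}$ and hence, via $S/(I:f)\hookrightarrow S/I$, an associated prime of $I$ strictly containing $\mathfrak{p}$ is exactly right, and the observation that $J_G$ is radical (hence no embedded primes) is the relevant reason~(ii) applies throughout the paper. I see no gaps.
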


\begin{comment}Given an homogeneous ideal $I$ of $S$, the v-number of $I$ is the following invariant

\[\textup{v}(I):=\min\{d\geq 0: \exists f\in S_{d}\ \text{and $\mathfrak{p}\in \text{Ass}(I)$ with $(I:f)=\mathfrak{p}$}\}.\]

We define the v-number of $I$ locally at each associated prime $\mathfrak{p}$ of $I$ as:

\[\textup{v}_{\mathfrak{p}}(I):=\min\{d\geq 0: \exists f\in S_{d}\ \text{with $(I:f)=\mathfrak{p}$}\}.\]

It follows from the definition that $\textup{v}(I)=\min\{\textup{v}_{\mathfrak{p}}(I):\mathfrak{p}\in \text{Ass}(I)\}$. We call $\textup{v}_{\mathfrak{p}}(I)$ the localization of $\textup{v}(I)$ at $\mathfrak{p}$. In this work we study the following question

\begin{question}\label{question}
Given a connected graph $G$, determine the localized v-number $\textup{v}_{P_{\mathcal{S}}}(J_G)$ of its associated binomial edge ideal $J_G$ at each minimal prime $P_{\mathcal{S}} \in \textup{Ass}(J_G)$.
\end{question}
\end{comment}

\subsection{Matroids}\label{matroids}

As in this work we only work with matroids of rank at most two, all definitions will be presented within this setting. For the general theory of matroids, see \cite{Oxley, piff1970vector}.

\begin{definition}
A matroid $M$ of rank at most two consists of a ground set $[n]$, together with a collection $\mathcal{I}$ of subsets of $[n]$ of cardinality at most two, called \emph{independent sets}, which satisfy the following axioms: the empty set is independent, i.e., $\emptyset \in \mathcal{I}$; the hereditary property, meaning if $I \in \mathcal{I}$ and $I' \subset I$, then $I' \in \mathcal{I}$; and the exchange property, which states that if $I_1, I_2 \in \mathcal{I}$ with $|I_1| < |I_2|$, then there exists an element $e \in I_2 \setminus I_1$ such that $I_1 \cup \{e\} \in \mathcal{I}$.
\end{definition}

\begin{comment}
\begin{definition}\label{def:matroidcircuit}\normalfont
A matroid can be defined from its {\em circuits}. In this formulation, $M$ consists of a ground set $[n]$, together with a collection $\Ccal$ of subsets of $[n]$, called
circuits, satisfying:

\begin{enumerate}[label=(\roman*)]
\item $\emptyset \not\in\Ccal$;
\item if $C_1,C_2 \in \Ccal$ and $C_1\subseteq C_2$, then $C_1=C_2$;
\item 
 if $C_{1}\neq C_{2} \in \Ccal$ then, for any $e\in C_1\cap C_2$, there exists $C_3\in\Ccal$ such that 
 \[
    C_3\; \subseteq \; \left(C_1\cup C_2\right) -\{e\}.
 \]
\end{enumerate}
The set of circuits of $M$ is denoted by $\Ccal(M)$, and $\Ccal_{i}(M)$ consists of those of size $i$.
\end{definition}

There are multiple equivalent ways to define a matroid, including descriptions in terms of independent sets, the rank function, or bases. We now introduce these concepts and refer to \cite{Oxley} for a comprehensive discussion on these equivalent definitions.
\end{comment}

\begin{definition}\label{def:dependant}
Let $M$ be a matroid of rank at most two on $[n]$. We introduce the following notions:

\begin{itemize}
\item A subset of $[n]$ is called \emph{dependent} if it is not independent. The collection of all dependent sets of $M$ is denoted by $\mathcal{D}(M)$.

%\item A {\em basis} is a maximal independent subset of $[n]$, with respect to the inclusion.%\\ 
%The set of all bases of $M$ is denoted by $\mathcal{B}(M)$, and they all have the same size.\vs

%\item The {\em rank} of $F$, denoted $\rank(F)$, is the size of the largest independent set contained in $F$. The rank of the matroid, denoted $\rank(M)$, is the size of any basis.\vs
\item An element $x \in [d]$ is called a \emph{loop} if $\{x\} \in \mathcal{D}(M)$.
\item Two distinct elements $x, y \in [d]$ are said to be \emph{parallel}, or form a \emph{double point}, if $\{x,y\} \in \mathcal{D}(M)$, while both $\{x\}$ and $\{y\}$ are independent.
\end{itemize}
\end{definition}

\begin{comment}
We now review the concepts of {\em restriction}, {\em deletion}, {\em truncation}, and {\em erection}:

\begin{definition}
\normalfont \label{subm}
Let $M$ be a matroid of rank $n$ on the ground set $[d]$ and $S\subseteq [d]$. 

{
\begin{itemize}[label=$\blacktriangleright$]
\item The {\em restriction of $M$ to $S$} is the matroid 
 on $S$ whose rank function is given by

 \[
  \rank(A)=\rank_{M}(A) \text{\quad for any $A\subset S$},
\] 
where $\rank_{M}$ is the rank function on $M$. This matroid is called a {\em submatroid} of $M$ and is denoted by $M|S$, or simply $S$ when the context is clear.\vs
\item The {\em deletion} of $S$, denoted $M\setminus S$, corresponds to the restriction
$M|([d]\setminus S)$.\vs

\end{itemize}
}
\end{definition}
\end{comment}

%\begin{definition}\label{dependency}
%Let $N_{1}$ and $N_{2}$ be matroids on $[n]$. We say that $N_{2}\geq N_{1}$ if $\mathcal{D}(N_{2}) \supset \mathcal{D}(N_{1})$. This partial order is referred to as the {\em dependency order} on matroids. 
%\end{definition}

\begin{definition}\normalfont\label{uniform 3}
The uniform matroid $U_{1, n}$ on the ground set $[n]$ of rank $1$ is  
is defined as follows: a subset $S\subset [d]$ is independent if and only if $|S| \leq 1$.
\end{definition}

\begin{comment}
We now introduce the concept of the circuit variety associated to a matroid.

\begin{definition}\normalfont\label{cir}
Let $M$ be a matroid of rank $r$ on $[n]$. 
Consider the $r\times n$ matrix $X=\pare{x_{i,j}}$ of indeterminates. The circuit ideal is defined as 
$$ I_{\Ccal(M)} = \{ [A|B]_X:\ B \in \Ccal(M),\ A \subset [r],\ \text{and}\ |A| = |B| \}, $$
where $[A|B]_X$ denotes the minor of $X$ formed by selecting the rows indexed by $A$ and columns indexed by $B$.
A collection of vectors $\gamma=\{\gamma_{1},\ldots,\gamma_{n}\}\subset \CC^{r}$ is said to {\em include the dependencies} of $M$ if it satisfies:
%the following condition:
\[\set{i_{1},\ldots,i_{k}}\  \text{is a dependent set of $M$} \Longrightarrow \set{\gamma_{i_{1}},\ldots,\gamma_{i_{k}}}\ \text{is linearly dependent}. \] 
The {\em circuit variety} of $M$ is defined as \[V_{\Ccal(M)}=\mathbb{V}(I_{\Ccal(M)})=\{\gamma\subset \CC^{r}:\text{$\gamma$ includes the dependencies of $M$}\}.\]
\end{definition}

In this work, we focus exclusively on matroids of rank two, which are fully determined by their circuits of size one (loops) and size two (double points). 
\end{comment}

Let $G$ be a simple graph on $[n]$. To each subset $\mathcal{S} \subset [n]$, we naturally associate a matroid $M(\mathcal{S})$ on $[n]$ of rank at most two, as described in Definition~\ref{definition MS}, whose collection of dependent sets is given by
\[
\mathcal{D}(M(\mathcal{S})) = \{ B \subseteq [n] : B \cap \mathcal{S} \neq \emptyset \} \cup_{i\in c(\mathcal{S})} \{ B : |B \cap V(G_{i})| \geq 2\} \cup \{ B : |B| \geq 3 \},
\]
where $G_{1}, \ldots, G_{c(\mathcal{S})}$ are the connected components of the induced subgraph $G_{[n] \setminus \mathcal{S}}$. As we shall see, these matroids are crucial in our study of the v-number of binomial edge ideals.

\begin{comment}
\begin{definition}\label{M(S)}
Let $G$ be a simple graph on $[n]$, and let $\mathcal{S}\subset [n]$. Denote by $G_{1},\ldots,G_{c(\mathcal{S})}$ the connected components of the induced subgraph $G_{[n]\setminus \mathcal{S}}$. We define the rank-two matroid $M(\mathcal{S})$ by specifying its sets of circuits of size at most two:
\[\mathcal{C}_{1}(M(\mathcal{S}))=\{\{i\}:i\in \mathcal{S}\}\quad \text{and} \quad 
\mathcal{C}_{2}(M(\mathcal{S}))=\textstyle \binom{V(G_{1})}{2}\cup \cdots \cup \binom{V(G_{c(\mathcal{S})})}{2},\]
\end{definition}
\end{comment}

\begin{remark}\label{connected and uniform}
Note that if $G$ is connected, then the matroid $M(\emptyset)$ coincides with the uniform matroid $U_{1,n}$ of rank one.
\end{remark}

\begin{comment}
It follows directly from the definition that the associated circuit variety satisfies

\[V_{\Ccal(M(\mathcal{S}))}=\mathbb{V}(P_{\mathcal{S}}),\]
where $\mathbb{V}(P_{\mathcal{S}})\subset \CC^{n}$ denotes the vanishing locus of the ideal  $P_{\mathcal{S}}$. This perspective yields the following geometric reformulation of Theorems~\ref{theorem decomposition} and~\ref{Theorem minimal primes}.

\begin{theorem}
Let $G$ be a connected graph on $[n]$. The variety $\mathbb{V}(J_{G})$ has the following irredundant irreducible decomposition
\[\mathbb{V}(J_{G})=\bigcup_{\mathcal{S}}V_{\Ccal(M(\mathcal{S}))
},\]
where the union is taken over all minimal $k$-cuts of $G$, together with the empty set.
\end{theorem}
\end{comment}

\section{Transversals and v-numbers of binomial edge ideals}\label{section transversals}

This section is devoted to the study of 
v-numbers of binomial edge ideals. By analyzing transversals of a family of subsets arising from the dependencies of certain rank-two matroids, we reduce the problem of computing localized 
v-numbers to that of determining the radical of an explicit ideal. This perspective also yields upper bounds for the localized 
v-numbers. Throughout this section, we fix $G$ to be a simple and connected graph on $[n]$.

\subsection{Reducing the problem to computing the radical of a transversal ideal}\label{reduction with transversals}

In this subsection, we show how Question~\ref{question 2} reduces to computing the radical of a certain ideal generated by transversals. Throughout this section, we fix a minimal prime $P_{\mathcal{S}}$ of $J_{G}$, and focus on addressing Question~\ref{question 2} with respect to this prime. 
We begin by recalling the definition of a transversal and introduce some necessary notation.

\begin{definition}\label{transveral definition}
Let $\Delta$ be a collection of subsets of a set $E$. A subset $A \subset \cup_{e\in \Delta}e$ is called a \emph{transversal} of $\Delta$ if $A \cap e \neq \emptyset$ for all $e \in \Delta$.
\end{definition}

\begin{definition}\label{important notation}
We now introduce the notions and terminology used throughout this section. For context, recall Definitions~\ref{definition MS} and~\ref{def:dependant}.

\begin{enumerate}
    \item[{\rm (i)}] For each $k \geq 2$, let ${\min}_k(G)$ denote the set of minimal $k$-cuts of the graph $G$. We set \[\min(G) := \bigcup_{k \geq 2} {\min}_k(G)\cup \{\emptyset\},\] 
    the collection of all minimal $k$-cuts of $G$, together with the emptyset.

    \item[{\rm (ii)}] For a fixed $\mathcal{S} \in \min(G)$, define $\Delta_{\mathcal{S}}$ as the collection of subsets
    \[
    \left\{ \mathcal{D}(M(\mathcal{S}')) \setminus \mathcal{D}(M(\mathcal{S})) : \mathcal{S}' \in \min(G) \setminus \{ \mathcal{S} \} \right\}
    \subseteq \textstyle (\binom{[n]}{1} \cup \binom{[n]}{2})\setminus \mathcal{D}(M(\mathcal{S})).
    \]
    We denote by $\mathcal{T}(\mathcal{S})$ the set of all transversals of $\Delta_{\mathcal{S}}$.

    \item[{\rm (iii)}]\label{item 3} Each $A \in \mathcal{T}(\mathcal{S})$ is written as a disjoint union $A = A_1 \amalg A_2$, where \[A_1 \subseteq \textstyle \binom{[n]}{1}\quad  \text{and} \quad A_2 \subseteq \binom{[n]}{2}.\] We identify $A_1$ with a subset of $[n]$ in the natural way.

    \item[{\rm (iv)}] For arbitrary subsets $C, D \subseteq [n]$, define the monomial \[g_{C,D} := \prod_{k \in C} x_k \prod_{k \in D} y_k.\]

    \item[{\rm (v)}]\label{item v} For any $A \in \mathcal{T}(\mathcal{S})$ and a partition $A_1 = C \amalg D$, define \[g_{A,C,D} := g_{C,D} \cdot \prod_{\{i,j\} \in A_2} f_{i,j},\]
    where $f_{i,j}=x_{i}y_{j}-x_{j}y_{i}$.
\end{enumerate}
\end{definition}

We are now ready to prove one of the two main results of this section. 

\begin{theorem}\label{theorem transversal}
With the notation above, the following equality of ideals holds:
\begin{equation}\label{equality of ideals}(J_{G}:P_{\mathcal{S}})=\sqrt{J_{G}+(g_{A,C,D}:A\in \mathcal{T}(\mathcal{S}) \ \text{and $C\amalg D=A_{1}$})}.\end{equation}
\end{theorem}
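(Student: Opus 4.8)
The plan is to prove the equality \eqref{equality of ideals} by a geometric/radical-ideal argument, passing through the variety side. Since $J_G$ is radical with primary decomposition $J_G=\bigcap_{\mathcal{S}'}P_{\mathcal{S}'}$ (Theorem~\ref{theorem decomposition}), and the $P_{\mathcal{S}'}$ with $\mathcal{S}'\in\min(G)$ are exactly the minimal primes (Theorem~\ref{Theorem minimal primes}), the colon ideal $(J_G:P_{\mathcal{S}})$ is itself radical and equals $\bigcap_{\mathcal{S}'\in\min(G),\ \mathcal{S}'\neq\mathcal{S}}P_{\mathcal{S}'}$; equivalently, $\mathbb{V}(J_G:P_{\mathcal{S}})=\bigcup_{\mathcal{S}'\neq\mathcal{S}}\mathbb{V}(P_{\mathcal{S}'})$, the union of all minimal-prime components other than the one attached to $\mathcal{S}$. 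So it suffices to show that $\mathbb{V}\big(J_G+(g_{A,C,D})\big)$ equals this same union of components, i.e. that adding the transversal generators $g_{A,C,D}$ to $J_G$ carves out precisely the components $\mathbb{V}(P_{\mathcal{S}'})$, $\mathcal{S}'\neq\mathcal{S}$, and kills $\mathbb{V}(P_{\mathcal{S}})$.

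First I would set up the dictionary between points and matroid dependencies: a point of $\mathbb{C}^{2n}$ is a pair of vectors (the columns of $X$ specialized), and $\mathbb{V}(P_{\mathcal{S}'})$ is exactly the locus of column-configurations whose linear dependencies include those of the rank-$\le 2$ matroid $M(\mathcal{S}')$ — a loop $\{i\}$ of $M(\mathcal{S}')$ forces the $i$-th column to vanish ($x_i=y_i=0$), and a double point $\{i,j\}$ forces columns $i,j$ to be proportional ($f_{i,j}=0$); this is precisely what the generators of $P_{\mathcal{S}'}$ in Definition~\ref{PS} express. In this language, the sets in $\Delta_{\mathcal{S}}$ are the dependent sets of size $1$ or $2$ that some other matroid $M(\mathcal{S}')$ has but $M(\mathcal{S})$ does not, and a transversal $A=A_1\amalg A_2$ of $\Delta_{\mathcal{S}}$ is a choice, for each $\mathcal{S}'\neq\mathcal{S}$, of one "new" small dependency of $M(\mathcal{S}')$. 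The monomial/binomial $g_{A,C,D}$ then vanishes on $\mathbb{V}(P_{\mathcal{S}'})$ for every $\mathcal{S}'\neq\mathcal{S}$: indeed $A$ meets $\mathcal{D}(M(\mathcal{S}'))\setminus\mathcal{D}(M(\mathcal{S}))$ in some element $e$, and if $e=\{k\}\in A_1$ then one of $x_k,y_k$ (whichever lies in the factor indexed by $C$ or $D$) — actually the relevant factor — vanishes on $\mathbb{V}(P_{\mathcal{S}'})$; here I must check that vanishing of the whole product follows regardless of which side of the partition $k$ lands on, which is true because on $\mathbb{V}(P_{\mathcal{S}'})$ the loop $\{k\}$ forces both $x_k=0$ and $y_k=0$. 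If $e=\{i,j\}\in A_2$ then $f_{i,j}$ vanishes on $\mathbb{V}(P_{\mathcal{S}'})$. Hence $J_G+(g_{A,C,D}:\ldots)\subseteq P_{\mathcal{S}'}$ for all $\mathcal{S}'\neq\mathcal{S}$, giving one inclusion $\sqrt{J_G+(g_{A,C,D})}\subseteq\bigcap_{\mathcal{S}'\neq\mathcal{S}}P_{\mathcal{S}'}=(J_G:P_{\mathcal{S}})$.

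For the reverse inclusion, equivalently $\mathbb{V}(J_G+(g_{A,C,D}))\subseteq\bigcup_{\mathcal{S}'\neq\mathcal{S}}\mathbb{V}(P_{\mathcal{S}'})$, take a point $\gamma$ on $\mathbb{V}(J_G)=\bigcup_{\mathcal{S}'\in\min(G)}\mathbb{V}(P_{\mathcal{S}'})$ that is \emph{not} on any $\mathbb{V}(P_{\mathcal{S}'})$ with $\mathcal{S}'\neq\mathcal{S}$, i.e. $\gamma\in\mathbb{V}(P_{\mathcal{S}})\setminus\bigcup_{\mathcal{S}'\neq\mathcal{S}}\mathbb{V}(P_{\mathcal{S}'})$; I must produce a generator $g_{A,C,D}$ with $g_{A,C,D}(\gamma)\neq 0$. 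The matroid $M(\gamma)$ of actual dependencies of $\gamma$ (its loops and double points) contains $\mathcal{D}(M(\mathcal{S}))$ (since $\gamma\in\mathbb{V}(P_{\mathcal{S}})$) but, for each $\mathcal{S}'\neq\mathcal{S}$, does \emph{not} contain all of $\mathcal{D}(M(\mathcal{S}'))$; so for each such $\mathcal{S}'$ there is a size-$\le 2$ set $e_{\mathcal{S}'}\in\mathcal{D}(M(\mathcal{S}'))\setminus\mathcal{D}(M(\mathcal{S}))$ that is \emph{independent} in $M(\gamma)$ — i.e. $e_{\mathcal{S}'}$ is not a loop/double point of $\gamma$. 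Collecting these $e_{\mathcal{S}'}$ gives a transversal $A$ of $\Delta_{\mathcal{S}}$ (here I need: it is exactly transversals that can be formed this way, and that every dependent set of size $\ge 3$ can be ignored since $\Delta_{\mathcal{S}}$ lives in $\binom{[n]}{1}\cup\binom{[n]}{2}$ — this uses that any rank-$\le 2$ matroid is determined by its size-$\le 2$ dependent sets, and that if $M(\mathcal{S}')$ has a genuinely larger $3$-element dependency not already captured, one can still find a $1$- or $2$-element witness because $M(\mathcal{S}')$ and $M(\mathcal{S})$ are both rank $\le 2$). For each $k\in A_1$, since $\{k\}$ is not a loop of $\gamma$, at least one of $x_k(\gamma),y_k(\gamma)$ is nonzero; choose the partition $A_1=C\amalg D$ putting $k$ into $C$ if $x_k(\gamma)\neq 0$ and into $D$ otherwise — then $g_{C,D}(\gamma)\neq 0$. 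For each $\{i,j\}\in A_2$, since $\{i,j\}$ is not a double point of $\gamma$ and (being in $A_2$, hence outside $\mathcal{D}(M(\mathcal{S}))$, hence with $i,j\notin\mathcal{S}$) neither column is zero, $f_{i,j}(\gamma)\neq 0$. Therefore $g_{A,C,D}(\gamma)\neq 0$, so $\gamma\notin\mathbb{V}(J_G+(g_{A,C,D}))$, which is the desired reverse inclusion. Combining the two inclusions, both sides of \eqref{equality of ideals} cut out the same variety, and since the left side is radical, equality of ideals follows.

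The main obstacle, and the step I would spend the most care on, is the reverse inclusion — specifically the claim that the witnesses $e_{\mathcal{S}'}$ can always be chosen of size $1$ or $2$ and assembled into an honest element of $\mathcal{T}(\mathcal{S})$, i.e. that $\Delta_{\mathcal{S}}$ being built only from $\binom{[n]}{1}\cup\binom{[n]}{2}$ loses no information. This requires genuinely using that all the $M(\mathcal{S}')$ have rank at most two, so that a configuration failing to include the dependencies of $M(\mathcal{S}')$ must already fail on a loop or a double point of $M(\mathcal{S}')$; I would isolate this as a separate lemma about rank-$\le 2$ matroids and the structure of $\mathcal{D}(M(\mathcal{S}'))\setminus\mathcal{D}(M(\mathcal{S}))$. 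A secondary technical point is handling loops carefully on both inclusions: in the "$\subseteq$" direction a loop generator vanishes on both $x_k$ and $y_k$ so the partition is irrelevant, while in the "$\supseteq$" direction the freedom to place $k$ on either side of the partition is exactly what lets us dodge the zero coordinate — these two uses are complementary and worth stating explicitly.
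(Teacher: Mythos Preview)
Your proof is correct and follows essentially the same geometric/variety-level argument as the paper: pass to varieties, identify $\mathbb{V}(J_G:P_{\mathcal{S}})=\bigcup_{\mathcal{S}'\neq\mathcal{S}}\mathbb{V}(P_{\mathcal{S}'})$, show each $g_{A,C,D}$ vanishes on every such component via the transversal property, and for the converse build a transversal $A$ from witnesses in $\mathcal{D}(M(\mathcal{S}'))\setminus\mathcal{D}(M(\gamma))$ together with a partition $C\amalg D$ chosen to dodge the zero coordinate. Your self-identified ``main obstacle'' dissolves once you note that $M(\gamma)$ itself has rank at most two (the columns live in $\mathbb{C}^2$), so every element of $\mathcal{D}(M(\mathcal{S}'))\setminus\mathcal{D}(M(\gamma))$ automatically has size $\le 2$, and since $\mathcal{D}(M(\mathcal{S}))\subseteq\mathcal{D}(M(\gamma))$ it also lies outside $\mathcal{D}(M(\mathcal{S}))$ --- the paper uses exactly this without further comment.
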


\begin{proof}
Set
\[
J := (g_{A,C,D} : A \in \mathcal{T}(\mathcal{S}) \ \text{and} \ C \amalg D = A_1).
\]
Since both sides of~\eqref{equality of ideals} are radical ideals, it suffices to prove the equality of their varieties:
\begin{equation}\label{closure ideal}
\overline{\mathbb{V}(J_G) \setminus \mathbb{V}(P_{\mathcal{S}})} = \mathbb{V}(J_G) \cap \mathbb{V}(J),
\end{equation}
where we use the identity
\[
\mathbb{V}((J_G : P_{\mathcal{S}})) = \overline{\mathbb{V}(J_G) \setminus \mathbb{V}(P_{\mathcal{S}})},
\]
see~\textup{\cite[§4, Theorem~10 (iii)]{david1991ideals}}, and the fact that $J_{G}$ is radical, see \textup{\cite[Corollary~2.2]{herzog2010binomial}}. Moreover, by Theorems~\ref{theorem decomposition} and~\ref{Theorem minimal primes}, the left-hand side of~\eqref{closure ideal} equals
\begin{equation}\label{union of ideals}
\bigcup_{\mathcal{S}' \in \min(G) \setminus \{\mathcal{S}\}} \mathbb{V}(P_{\mathcal{S}'}).
\end{equation}

We begin by proving the inclusion $\subseteq$. Let $\gamma = (\gamma_1, \ldots, \gamma_n) \in (\CC^2)^n$ be a point in~\eqref{union of ideals}, and suppose without loss of generality that $\gamma \in \mathbb{V}(P_{\mathcal{S}'})$ for some $\mathcal{S}' \ne \mathcal{S}$. We aim to show that $\gamma \in \mathbb{V}(J)$.

Let $g_{A,C,D}$ be an arbitrary generator of $J$. Since $A \in \mathcal{T}(\mathcal{S})$, there exists an element $e \in A \cap \mathcal{D}(M(\mathcal{S}'))$. We distinguish two cases:

\smallskip
\noindent
\textbf{Case $e = \{i\}$:} Then $i \in \mathcal{S}'$, and since $\gamma \in \mathbb{V}(P_{\mathcal{S}'})$, we have $\gamma_i = 0$. Moreover, by construction of $g_{A,C,D}$, either $x_i$ or $y_i$ divides it, so $g_{A,C,D}(\gamma) = 0$.

\smallskip
\noindent
\textbf{Case $e = \{j,k\}$:} Then $j$ and $k$ lie in the same connected component of $G_{[n] \setminus \mathcal{S}'}$, and since $\gamma \in \mathbb{V}(P_{\mathcal{S}'})$, this implies that $\gamma_j$ and $\gamma_k$ are linearly dependent. Since $f_{j,k}$ divides $g_{A,C,D}$, it follows again that $g_{A,C,D}(\gamma) = 0$.

\smallskip
In both cases, $g_{A,C,D}$ vanishes at $\gamma$, and since this holds for all generators, we conclude $\gamma \in \mathbb{V}(J)$.

We now prove the reverse inclusion. Let $\gamma \in \mathbb{V}(J_G) \cap \mathbb{V}(J)$ and suppose, for contradiction, that $\gamma \notin \mathbb{V}(P_{\mathcal{S}'})$ for every $\mathcal{S}' \in \min(G) \setminus \{ \mathcal{S} \}$. We can assume $\gamma \in \mathbb{V}(P_{\mathcal{S}})$, because on the contrary we would have $\gamma \in \mathbb{V}(J_{G})\setminus \mathbb{V}(P_{\mathcal{S}})$.

Let $M(\gamma)$ be the matroid on $[n]$ induced by the vectors $\gamma_1, \ldots, \gamma_n$. Then, since $\gamma \in \mathbb{V}(P_{\mathcal{S}})$ and $\gamma \notin \mathbb{V}(P_{\mathcal{S}'})$ for every $\mathcal{S}' \in \min(G) \setminus \{ \mathcal{S} \}$, it follows that
\[
\mathcal{D}(M(\gamma)) \supseteq \mathcal{D}(M(\mathcal{S})) \quad \text{and} \quad 
\mathcal{D}(M(\gamma)) \nsupseteq \mathcal{D}(M(\mathcal{S}'))
\]
for all $\mathcal{S}' \in \min(G) \setminus \{ \mathcal{S} \}$.

Let $A$ be a set containing one element from each difference
\[
\mathcal{D}(M(\mathcal{S}')) \setminus \mathcal{D}(M(\gamma)).
\]
By construction, $A \in \mathcal{T}(\mathcal{S})$. Since $A \subseteq \textstyle \binom{[n]}{1} \cup \binom{[n]}{2}$ and all its elements lie outside $\mathcal{D}(M(\gamma))$, we have:
\begin{itemize}
\item For each $i \in A_1$, at least one of $x_i$ or $y_i$ does not vanish at $\gamma$;
\item For each $\{j,k\} \in A_2$, we have $f_{j,k}(\gamma) \ne 0$.
\end{itemize}

Hence, we can find a partition $C \amalg D = A_1$ such that $g_{A,C,D}(\gamma) \ne 0$, contradicting the assumption $\gamma \in \mathbb{V}(J)$. This contradiction completes the proof.
\end{proof}

\begin{definition}\label{notation JS}
To simplify notation, we denote by
\[
J_{\mathcal{T}(\mathcal{S})} := \left(g_{A,C,D} : A \in \mathcal{T}(\mathcal{S}) \ \text{with} \ C \amalg D = A_1\right)
\]
the ideal generated by all such polynomials.
\end{definition}

As a consequence of Theorem~\ref{theorem transversal}, we obtain upper bounds for the localized v-numbers. These bounds are computed in a purely combinatorial way.

\begin{theorem}\label{corollary radical}
With the notation above, the localized \textup{v}-number satisfies
\[
\textup{v}_{P_{\mathcal{S}}}(J_{G}) \leq \min \{ |A_{1}| + 2|A_{2}| : A \in \mathcal{T}(\mathcal{S}) \}.
\]
Moreover, equality holds if the ideal $J_{G}+J_{\mathcal{T}(\mathcal{S})}$ is radical.
\end{theorem}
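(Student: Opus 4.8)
The plan is to combine Theorem~\ref{theorem transversal} with the structural results on v-numbers recalled in Theorem~\ref{results v number}, using the fact that $J_G$ is radical (hence has no embedded primes) so that part~(ii) of that theorem applies.

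First I would establish the upper bound. By Theorem~\ref{theorem transversal} we have $(J_G:P_{\mathcal{S}}) = \sqrt{J_G + J_{\mathcal{T}(\mathcal{S})}}$, and in particular $J_G + J_{\mathcal{T}(\mathcal{S})} \subseteq (J_G:P_{\mathcal{S}})$. Fix a transversal $A \in \mathcal{T}(\mathcal{S})$ realizing the minimum on the right-hand side, and choose any partition $A_1 = C \amalg D$. Then $g_{A,C,D}$ is a homogeneous element of $(J_G:P_{\mathcal{S}})$ of degree $|A_1| + 2|A_2|$, since each factor $x_k$ or $y_k$ contributes degree $1$ and each binomial $f_{i,j}$ contributes degree $2$. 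Its image $\overline{g_{A,C,D}}$ in $(J_G:P_{\mathcal{S}})/J_G$ is nonzero: otherwise $g_{A,C,D} \in J_G \subseteq P_{\mathcal{S}'}$ for every $\mathcal{S}' \in \min(G)$, but by the very construction of $A \in \mathcal{T}(\mathcal{S})$ and the argument in the reverse inclusion of the proof of Theorem~\ref{theorem transversal}, one can choose $\gamma \in \mathbb{V}(P_{\mathcal{S}})$ (or more carefully a point where the relevant matroid is exactly $M(\mathcal{S})$) with $g_{A,C,D}(\gamma) \neq 0$, contradicting $g_{A,C,D} \in J_G$. Since $J_G$ has no embedded primes, Theorem~\ref{results v number}(ii) gives $\textup{v}_{P_{\mathcal{S}}}(J_G) = \alpha\big((J_G:P_{\mathcal{S}})/J_G\big) \leq \deg \overline{g_{A,C,D}} = |A_1| + 2|A_2|$, which is the claimed bound.

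For the equality statement, suppose $J_G + J_{\mathcal{T}(\mathcal{S})}$ is radical. Then Theorem~\ref{theorem transversal} gives $(J_G:P_{\mathcal{S}}) = J_G + J_{\mathcal{T}(\mathcal{S})}$ on the nose. Hence $(J_G:P_{\mathcal{S}})/J_G$ is generated, as an ideal of $S/J_G$ (equivalently, as a module), by the images of the generators $g_{A,C,D}$ with $A \in \mathcal{T}(\mathcal{S})$ and $C \amalg D = A_1$. By Theorem~\ref{results v number}(ii), $\textup{v}_{P_{\mathcal{S}}}(J_G) = \alpha\big((J_G:P_{\mathcal{S}})/J_G\big)$, and since a generating set of a graded module contains an element of minimal degree among all nonzero elements, we get $\alpha\big((J_G:P_{\mathcal{S}})/J_G\big) = \min\{ \deg \overline{g_{A,C,D}} : \overline{g_{A,C,D}} \neq 0 \}$. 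As shown above, every $\overline{g_{A,C,D}}$ is nonzero, and $\deg g_{A,C,D} = |A_1| + 2|A_2|$ depends only on $A$, so this minimum equals $\min\{|A_1| + 2|A_2| : A \in \mathcal{T}(\mathcal{S})\}$, giving equality.

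The main obstacle I anticipate is the nonvanishing claim $\overline{g_{A,C,D}} \neq 0$ in $(J_G:P_{\mathcal{S}})/J_G$, i.e. that $g_{A,C,D} \notin J_G$. This needs to be argued carefully: one must exhibit a point $\gamma$ in the variety $\mathbb{V}(J_G)$ — in fact one may take $\gamma \in \mathbb{V}(P_{\mathcal{S}})$ — at which $g_{A,C,D}$ does not vanish, which amounts to realizing a collection of vectors whose induced rank-two matroid contains $\mathcal{D}(M(\mathcal{S}))$ but avoids all the elements of the transversal $A$ (these elements lie outside $\mathcal{D}(M(\mathcal{S}))$ by definition of $\Delta_{\mathcal{S}}$). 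Such a realization exists essentially by the same argument used in the reverse inclusion of the proof of Theorem~\ref{theorem transversal}; making this precise, and noting that a generating set of a nonzero graded module over a polynomial ring necessarily attains the invariant $\alpha$, are the two points that require care. Everything else is a direct bookkeeping of degrees together with the cited results.
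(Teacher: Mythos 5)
Your proof is correct and follows essentially the same route as the paper: both pass through Theorem~\ref{theorem transversal} and Theorem~\ref{results v number}, establish $g_{A,C,D}\notin J_G$ via a nonvanishing argument on $\mathbb{V}(P_{\mathcal{S}})$, and in the radical case read off the minimum degree of a generator of $(J_G:P_{\mathcal{S}})/J_G$. The only cosmetic differences are that the paper phrases the nonvanishing of $g_{A,C,D}$ on $\mathbb{V}(P_{\mathcal{S}})$ directly via irreducibility of $\mathbb{V}(P_{\mathcal{S}})$ rather than exhibiting an explicit point, and it invokes Theorem~\ref{results v number}(i) instead of your (correct) combination of part~(ii) with the elementary observation that a graded module generated in degrees $\geq d$ has no nonzero elements of degree $< d$.
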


\begin{proof}
Let $A \in \mathcal{T}(\mathcal{S})$. By Theorem~\ref{theorem transversal}, we have $g_{A,C,D} \in (J_G : P_{\mathcal{S}})$ for any partition $\{C, D\}$ of $A_1$. Moreover, since all elements of $A$ lie outside $\mathcal{D}(M(\mathcal{S}))$, each factor of $g_{A,C,D}$ is nonzero on $\mathbb{V}(P_{\mathcal{S}})$. As $\mathbb{V}(P_{\mathcal{S}})$ is irreducible, 
It follows that $g_{A,C,D}$ does not vanish identically on $\mathbb{V}(P_{\mathcal{S}})$, and therefore not on $\mathbb{V}(J_G)$ either. Consequently, we have $g_{A,C,D} \notin J_G$. Note also that
\[
\deg(g_{A,C,D}) = |A_1| + 2|A_2|.
\]
Thus, by Theorem~\ref{results v number} (ii), we have 
\[\textup{v}_{P_{\mathcal{S}}}(J_G)=\alpha((J_G : P_{\mathcal{S}})/J_{G})\leq \deg(g_{A,C,D}) = |A_1| + 2|A_2|,\]
establishing the first part of the claim, since $A\in \TS$ is arbitrary.

Now assume that $J_{G} + \JS$ is radical. By Theorem~\ref{theorem transversal}, we have
\[
(J_G : P_{\mathcal{S}}) / J_G = (J_G + \JS)/J_G.
\]
Moreover, 
\[
\left\{ g_{A,C,D} : A \in \mathcal{T}(\mathcal{S}) \ \text{with} \ C \amalg D = A_1 \right\}
\]
is a generating set for this quotient. Then, applying Theorem~\ref{results v number} (i), we obtain
\[
\textup{v}_{P_{\mathcal{S}}}(J_G) \geq \min \left\{ \deg(g_{A,C,D}) : A \in \mathcal{T}(\mathcal{S}),\ C \amalg D = A_1 \right\} 
= \min \left\{ |A_{1}| + 2|A_{2}| : A \in \mathcal{T}(\mathcal{S}) \right\},
\]
which completes the proof.
\begin{comment}
Then, again by Theorem~\ref{results v number} (ii), we have
\[
\textup{v}_{P_{\mathcal{S}}}(J_G) = \alpha((J_G : P_{\mathcal{S}}) / J_G).
\]
Applying Theorem~\ref{theorem transversal}, it follows that
\begin{equation*}
\begin{aligned}
\alpha((J_G : P_{\mathcal{S}}) / J_G) 
&= \alpha\left( (J_G + \JS)/J_G \right) \\
&= \alpha(\JS) \\
&= \alpha\left( \left( g_{A,C,D} : A \in \mathcal{T}(\mathcal{S}), \, C \amalg D = A_1 \right) \right).
\end{aligned}
\end{equation*}
Hence,
\begin{equation*}
\begin{aligned}
\textup{v}_{P_{\mathcal{S}}}(J_G) 
&= \min\{ \deg(g_{A,C,D}) : A \in \mathcal{T}(\mathcal{S}), \, C \amalg D = A_1 \} \\
&= \min\{ |A_1| + 2|A_2| : A \in \mathcal{T}(\mathcal{S}) \},
\end{aligned}
\end{equation*}
which concludes the proof.
\end{comment}
\end{proof}

\begin{remark}\label{remark strategy}
This method reduces the computation of the localized v-number to determining a finite set of generators for the radical of $J_{G} + \JS$. Once such generators are identified, Theorem~\ref{results v number} provides an explicit formula for $\VPJ$. This leads to the following strategy for computing $\VPJ$:

\begin{enumerate}
\item[{\rm (i)}] Determine the collection of all transversals $\mathcal{T}(\mathcal{S})$.
\item[{\rm (ii)}] Compute the radical of $J_{G} + J_{\mathcal{T}(\mathcal{S})}$, typically via Gröbner basis methods.
\item[{\rm (iii)}] Identify the minimal degree of a homogeneous polynomial in  $\textstyle \sqrt{J_{G} + J_{\mathcal{T}(\mathcal{S})}}/J_G$, which equals $\textup{v}_{P_{\mathcal{S}}}(J_G)$.
\end{enumerate}
\end{remark}

\subsection{Computing $\textup{v}_{J_{\mathcal{K}_{n}}}(J_{G})$}

In this subsection, we fix a connected graph $G$ and consider the case $\mathcal{S} = \emptyset$, corresponding to the prime $J_{\mathcal{K}_{n}}$, where $\mathcal{K}_{n}$ is the complete graph on $[n]$. Using the reduction method introduced in Subsection~\ref{reduction with transversals}, we compute the algebraic invariant $\JK$. We further show that this invariant coincides with a purely combinatorial invariant of $G$: its connected domination number, see Definition~\ref{definition connected domination}.

We note that the results established in this subsection also appear in \cite{jaramillo2024connected} and \cite{ambhore2024v}. However, our treatment is based on a substantially different method, providing a new proof of these results. %We begin by introducing the notion of connected dominant sets.

\begin{comment}
\begin{definition}
Let $G$ be a connected graph. A \emph{connected dominating set} of $G$ is a subset $B \subseteq V(G)$ such that the induced subgraph $G_B$ is connected and every vertex in $V(G) \setminus B$ is adjacent to some vertex in $B$. We denote the collection of connected dominating sets of $G$ by $\mathcal{D}_c(G)$, and define the \emph{connected domination number} of $G$ as
\[
\gamma_c(G) := \min\{ |B| : B \in \mathcal{D}_c(G) \}.
\]
\end{definition}
\end{comment}

Our first step in computing $\JK$ is to describe the set $\mathcal{T}(\emptyset)$ of transversals of
\[
    \left\{ \mathcal{D}(M(\mathcal{S}')) \setminus \mathcal{D}(U_{1,n}) : \mathcal{S}' \in \min(G) \setminus \{ \emptyset \} \right\},
\]
where Remark~\ref{connected and uniform} applies. To that end, we begin by proving the following combinatorial lemma, for which we recall the definition of $\mathcal{D}_c(G)$ from Definition~\ref{definition connected domination}.

\begin{lemma}\label{transversal and connected dominant}
Let $G\neq \mathcal{K}_{n}$ be a connected graph on the vertex set $[n]$. A subset $A \subset [n]$ is a transversal of the collection
\[
\Delta = \{ S : S \in \min(G) \setminus \{ \emptyset \} \}
\]
if and only if $A$ belongs to the collection of connected dominating sets $\mathcal{D}_c(G)$.
\end{lemma}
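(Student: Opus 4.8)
The plan is to establish the two implications separately, passing through an explicit description of the sets in $\min(G)\setminus\{\emptyset\}$ as $2$-element subsets. First I would record the key observation that, since $G$ is connected, for each $\mathcal{S}'\in\min(G)\setminus\{\emptyset\}$ the set $\mathcal{D}(M(\mathcal{S}'))\setminus\mathcal{D}(U_{1,n})$ consists exactly of those pairs $\{i,j\}$ with $i,j$ in the same connected component of $G_{[n]\setminus\mathcal{S}'}$ together with those singletons $\{i\}$ with $i\in\mathcal{S}'$; this follows directly from Definition~\ref{definition MS} and the fact that $\mathcal{D}(U_{1,n})$ contains all subsets of size $\geq 2$ (Remark~\ref{connected and uniform}), so the difference lies in $\binom{[n]}{1}\cup\binom{[n]}{2}$. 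I would then note that $A\subseteq[n]$ meets such a difference set if and only if $A\cap\mathcal{S}'\neq\emptyset$ or $A$ contains two vertices lying in a common component of $G_{[n]\setminus\mathcal{S}'}$; equivalently, writing $A$ as a subset of $[n]$, the set $A$ fails to be a transversal at $\mathcal{S}'$ precisely when $A\cap\mathcal{S}'=\emptyset$ \emph{and} $A\setminus\mathcal{S}'=A$ meets at most one component of $G_{[n]\setminus\mathcal{S}'}$, i.e.\ $A$ is entirely contained in $\mathcal{S}'\cup V(G_i)$ for a single component $G_i$.

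Next, for the implication "$A\in\mathcal{D}_c(G)\Rightarrow A$ is a transversal of $\Delta$", I would argue by contradiction: suppose $A\in\mathcal{D}_c(G)$ but $A$ fails to meet $\mathcal{D}(M(\mathcal{S}'))\setminus\mathcal{D}(U_{1,n})$ for some minimal $k$-cut $\mathcal{S}'$. By the reformulation above, $A\cap\mathcal{S}'=\emptyset$ and $A$ is contained in $V(G_i)$ for a single component $G_i$ of $G_{[n]\setminus\mathcal{S}'}$. Since $k\geq 2$, there is another component $G_j$ with $j\neq i$, and $V(G_j)\neq\emptyset$. Any vertex $v\in V(G_j)$ is not in $A$ (as $A\subseteq V(G_i)$) and all its neighbors lie in $V(G_j)\cup\mathcal{S}'$; since $A\cap\mathcal{S}'=\emptyset$ and $A\cap V(G_j)=\emptyset$, the vertex $v$ has no neighbor in $A$, contradicting that $A$ dominates $G$. (If $A=\emptyset$ the hypothesis $G\neq\mathcal{K}_n$ still forces $\min(G)\setminus\{\emptyset\}\neq\emptyset$, so the empty set is not a transversal and also not connected dominating, consistent with the claim.)

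For the converse "$A$ a transversal $\Rightarrow A\in\mathcal{D}_c(G)$", I would prove the contrapositive, splitting into the two ways $A$ can fail to be a connected dominating set. If $G_A$ is disconnected, I would produce a minimal $k$-cut $\mathcal{S}'$ witnessing that $A$ is not a transversal: roughly, take a minimal vertex cut $\mathcal{S}'$ of $G$ separating two parts of $A$ that is disjoint from $A$ and such that $A$ lands in a single component of $G_{[n]\setminus\mathcal{S}'}$ — here I must be careful to choose $\mathcal{S}'$ avoiding $A$, which is possible because $G_A$ already fails to be connected so some separation of $A$ can be realized "outside" $A$; then $A\cap\mathcal{S}'=\emptyset$ but $A$ meets only one component, or — and this is the subtle point — possibly $A$ meets two components, in which case I instead need the \emph{domination} failure. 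If instead $G_A$ is connected but some $v\in V(G)\setminus A$ has no neighbor in $A$, then I would take $\mathcal{S}'=N(v)$ (or a minimal $k$-cut contained in it): removing $N(v)$ isolates $v$ in its own component, $A\cap\mathcal{S}'=\emptyset$ since $v$ has no neighbor in $A$ and $v\notin A$, and $A$ (being connected and disjoint from both $v$ and $N(v)$) lies in a single component of $G_{[n]\setminus\mathcal{S}'}$; one then shrinks $N(v)$ down to an actual minimal $k$-cut still avoiding $A$. The main obstacle I anticipate is precisely this last construction — showing that a failure of connectedness or domination of $A$ can always be certified by an element of $\min(G)\setminus\{\emptyset\}$, i.e.\ by a genuine \emph{minimal} $k$-cut disjoint from $A$ and with $A$ confined to one component; handling the disconnected case cleanly (choosing the right separating set and ensuring $A$ meets only one side, or else falling back to a domination argument) is where the care is needed, and I would likely isolate it as a small lemma about minimal cuts in connected graphs.
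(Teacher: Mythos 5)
Your plan goes off the rails at the very first step: you describe an "explicit description" of the transversal condition in terms of the matroid difference sets $\mathcal{D}(M(\mathcal{S}'))\setminus\mathcal{D}(U_{1,n})$, but that collection is what appears in Proposition~\ref{ideal J emptyset}, not in Lemma~\ref{transversal and connected dominant}. Here $\Delta=\{S: S\in\min(G)\setminus\{\emptyset\}\}$ is simply the collection of nonempty minimal $k$-cuts, each an honest subset of $[n]$, and the transversal condition is just $A\cap S\neq\emptyset$ for every $S$. Even your computation of the matroid differences is wrong: since $\mathcal{D}(U_{1,n})$ contains \emph{every} subset of size at least two, all the pairs $\{i,j\}$ are removed, and the difference is exactly $\{\{i\}:i\in\mathcal{S}'\}$ (the loops), as the paper notes in the proof of Proposition~\ref{ideal J emptyset}. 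Your claim that the difference also contains pairs $\{i,j\}$ in a common component contradicts your own observation in the same sentence that $\mathcal{D}(U_{1,n})$ contains all subsets of size~$\geq 2$.

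This error is not cosmetic: the spurious pairs make your "transversal" condition strictly weaker than the true one, and the equivalence you set out to prove becomes false. Take $G=P_5$ (the path $1\text{--}2\text{--}3\text{--}4\text{--}5$) and $A=\{1,2,4,5\}$. Then $A$ is disconnected, so $A\notin\mathcal{D}_c(G)$, and indeed $A\cap\{3\}=\emptyset$ shows it is not a transversal of $\Delta$. But in your reformulation $A$ \emph{would} pass the test at $\mathcal{S}'=\{3\}$ because $\{1,2\}$ lies in a single component of $G_{[n]\setminus\{3\}}$, and $A$ meets every other nonempty minimal cut ($\{2\},\{4\},\{2,4\}$) directly. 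So the converse direction cannot be completed from your setup, which is precisely the step you flag as "where the care is needed"; the difficulty is self-inflicted, because you are trying to produce a minimal cut that both avoids $A$ and confines $A$ to one component, a requirement that is simply not part of the statement. With the correct transversal condition the converse is short, and it is the "small lemma" you anticipated: if $G_A$ is disconnected, any inclusion-minimal $S\subseteq[n]\setminus A$ that disconnects $G$ lies in $\min(G)\setminus\{\emptyset\}$ and satisfies $S\cap A=\emptyset$; if instead $G_A$ is connected but some $i\notin A$ has no neighbour in $A$, apply the same minimization inside $[n]\setminus(A\cup\{i\})$. This is exactly how the paper argues. Your forward direction is salvageable (once you observe that $A\cap\mathcal{S}'=\emptyset$ together with connectedness of $A$ already forces $A$ into a single component), but the reformulation it rests on should be dropped entirely.
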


\begin{proof}
Assume first that $A \in \mathcal{D}_c(G)$. To show that $A$ is a transversal of $\Delta$, we must prove that $A \cap S \neq \emptyset$ for every $S \in \min(G) \setminus \{ \emptyset \}$. Suppose, for contradiction, that there exists $S \in \min(G) \setminus \{ \emptyset \}$ such that $A \cap S = \emptyset$. Then $A \subset [n] \setminus S$, and since $A$ is connected and dominant, it follows that the induced subgraph on $[n] \setminus S$ is connected. This contradicts the assumption that $S \in \min(G)\setminus \{ \emptyset \}$, as such sets have disconnected complements.

Conversely, suppose $A$ is a transversal of $\Delta$. We show that $A \in \mathcal{D}_c(G)$ by verifying that $A$ is both connected and dominant.

\smallskip

\textbf{Connectedness.} Suppose, for contradiction, that $A$ is disconnected. Then the removal of the vertices in $[n] \setminus A$ disconnects the graph. Hence, there exists a minimal subset $S \subset [n] \setminus A$ such that the induced subgraph on $[n] \setminus S$ is disconnected. By minimality, we have $S \in {\min}_{2}(G)$ and $S \cap A = \emptyset$, contradicting that $A$ is a transversal of $\Delta$.

\smallskip

\textbf{Dominance.} Suppose, for contradiction, that $A$ is not dominant. Then there exists a vertex $i \in [n] \setminus A$ that is not adjacent to any vertex in $A$, so the subgraph induced by $A \cup \{i\}$ is disconnected. It follows that the removal of $[n] \setminus (A \cup \{i\})$ disconnects $G$. Thus, there exists a minimal subset $S \subset [n] \setminus (A \cup \{i\})$ such that $[n] \setminus S$ is disconnected. Again, we find $S \in \min(G)$ and $S \cap A = \emptyset$, contradicting that $A$ is a transversal of $\Delta$. This completes the proof.
\end{proof}

We are now in a position to describe the set of transversals $\mathcal{T}(\emptyset)$ and to present an explicit finite generating set for the ideal $J_{\mathcal{T}(\emptyset)}$.

\begin{proposition}\label{ideal J emptyset}
A subset $A \subset \textstyle (\binom{[n]}{1} \cup \binom{[n]}{2})\setminus \mathcal{D}(M(\emptyset))$ belongs to $\mathcal{T}(\emptyset)$ if and only if $A_{1} \in \mathcal{D}_{c}(G)$. Consequently, we have
\[
J_{\mathcal{T}(\emptyset)} = \left( g_{C,D} : C \cup D \in \mathcal{D}_{c}(G),\ C \cap D = \emptyset \right).
\]
\end{proposition}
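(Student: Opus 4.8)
The plan is to identify the collection $\Delta_{\emptyset}$ explicitly and then apply Lemma~\ref{transversal and connected dominant}. By Remark~\ref{connected and uniform}, $M(\emptyset) = U_{1,n}$, so $\mathcal{D}(M(\emptyset))$ consists of all subsets of $[n]$ of cardinality $\geq 2$ together with any loops; since $G$ is connected there are no loops, hence the singletons $\binom{[n]}{1}$ are precisely the independent sets, and $\bigl(\binom{[n]}{1}\cup\binom{[n]}{2}\bigr)\setminus\mathcal{D}(M(\emptyset)) = \binom{[n]}{1}$, i.e. all the "allowed" elements are singletons. Therefore every transversal $A\in\mathcal{T}(\emptyset)$ automatically has $A_{2}=\emptyset$ and we may identify $A=A_{1}$ with a subset of $[n]$.

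Next I would compute, for each $\mathcal{S}'\in\min(G)\setminus\{\emptyset\}$, the difference $\mathcal{D}(M(\mathcal{S}'))\setminus\mathcal{D}(M(\emptyset))$ intersected with $\binom{[n]}{1}\cup\binom{[n]}{2}$ — since anything of size $\geq 2$ already lies in $\mathcal{D}(M(\emptyset))$, the only new elements of size $\leq 2$ are the singletons $\{i\}$ with $i\in\mathcal{S}'$. Thus, as a subcollection of $\binom{[n]}{1}$, the set $\mathcal{D}(M(\mathcal{S}'))\setminus\mathcal{D}(M(\emptyset))$ is exactly $\{\{i\}:i\in\mathcal{S}'\}$, which under the identification of $\binom{[n]}{1}$ with $[n]$ is just $\mathcal{S}'$ itself. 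Hence $\Delta_{\emptyset}$ is identified with the collection $\{\mathcal{S}':\mathcal{S}'\in\min(G)\setminus\{\emptyset\}\}$, which is precisely the collection $\Delta$ appearing in Lemma~\ref{transversal and connected dominant}. Applying that lemma, $A\subseteq[n]$ is a transversal of $\Delta_{\emptyset}$ if and only if $A\in\mathcal{D}_{c}(G)$, which establishes the first assertion. (The case $G=\mathcal{K}_{n}$ is excluded implicitly, or rather is vacuous here since then $\min(G)=\{\emptyset\}$ and $\mathcal{T}(\emptyset)$ behaves degenerately; in the connected non-complete case Lemma~\ref{transversal and connected dominant} applies directly.)

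For the formula for $J_{\mathcal{T}(\emptyset)}$, I would just unwind Definition~\ref{notation JS}: a generator $g_{A,C,D}$ is indexed by $A\in\mathcal{T}(\emptyset)$ and a partition $A_{1}=C\amalg D$; since $A_{2}=\emptyset$, the product $\prod_{\{i,j\}\in A_{2}}f_{i,j}$ is empty, so $g_{A,C,D}=g_{C,D}=\prod_{k\in C}x_{k}\prod_{k\in D}y_{k}$. By the first part, the admissible $A_{1}$ are exactly the elements of $\mathcal{D}_{c}(G)$, so ranging over all $A\in\mathcal{T}(\emptyset)$ and all partitions $C\amalg D=A_{1}$ amounts to ranging over all ordered pairs $(C,D)$ of disjoint subsets with $C\cup D\in\mathcal{D}_{c}(G)$, giving exactly the stated generating set. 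I do not anticipate a serious obstacle: the whole argument is a matter of correctly translating the matroid-theoretic set differences into singleton conditions and citing Lemma~\ref{transversal and connected dominant}; the only point requiring a little care is the identification of $\binom{[n]}{1}$ with $[n]$ and checking that connectedness of $G$ rules out loops so that the "allowed" set is all of $\binom{[n]}{1}$.
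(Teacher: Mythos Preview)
Your proposal is correct and follows essentially the same approach as the paper: identify $\mathcal{D}(M(\mathcal{S}'))\setminus\mathcal{D}(U_{1,n})$ with the set of loops $\mathcal{S}'$, reduce $\Delta_{\emptyset}$ to the collection $\{\mathcal{S}':\mathcal{S}'\in\min(G)\setminus\{\emptyset\}\}$, and invoke Lemma~\ref{transversal and connected dominant}. Your explicit observation that $A_{2}=\emptyset$ for every $A\in\mathcal{T}(\emptyset)$ is a helpful clarification that the paper leaves implicit.
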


\begin{proof}
The elements of $\mathcal{T}(\emptyset)$ are precisely the transversals of the collection
\[
    \left\{ \mathcal{D}(M(\mathcal{S}')) \setminus \mathcal{D}(U_{1,n}) : \mathcal{S}' \in \min(G) \setminus \{ \emptyset \} \right\}.
\]
Recall that the set of dependencies of the uniform matroid $U_{1,n}$ consists of all subsets of $[n]$ of size at least two. Hence, for each $\mathcal{S}'$, the difference $\mathcal{D}(M(\mathcal{S}')) \setminus \mathcal{D}(U_{1,n})$ corresponds exactly to the set of loops of the matroid $M(\mathcal{S}')$, which is, by definition, equal to $\mathcal{S}'$.

It follows that $\mathcal{T}(\emptyset)$ coincides with the set of transversals of the collection
\[
\{ \mathcal{S}' : \mathcal{S}' \in \min(G) \setminus \{ \emptyset \} \},
\]
which, by Lemma~\ref{transversal and connected dominant}, is precisely the set $\mathcal{D}_{c}(G)$. This proves the first part of the statement. The second part then follows directly from the definition of the ideal $J_{\mathcal{T}(\emptyset)}$.
\end{proof}

We are now prepared to establish the main result of this subsection, which offers a new proof of~\textup{\cite[Theorem~3.2]{jaramillo2024connected}}. This theorem demonstrates that the algebraic invariant $\JK$ coincides with the connected domination number $\gamma_{c}(G)$.

\begin{theorem}\label{theorem empty}
Let $G$ be a connected graph on $[n]$. Then, we have
\[\JK=\begin{cases}
\gamma_{c}(G) & \text{if $G\neq \mathcal{K}_{n},$}\\
0 & \text{if $G=\mathcal{K}_{n}$.}
\end{cases}
\]
\end{theorem}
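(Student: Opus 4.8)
The plan is to split into the two cases indicated in the statement. The case $G = \mathcal{K}_n$ is essentially immediate: when $G$ is complete, $J_G = J_{\mathcal{K}_n}$ is itself prime (it is the determinantal ideal of $2$-minors of $X$), so $P_{\emptyset} = J_{\mathcal{K}_n}$ is the unique associated prime, and $(J_G : P_{\emptyset}) = (J_G : J_G) = S$. Hence $\textup{v}_{J_{\mathcal{K}_n}}(J_G) = \alpha(S / J_G)$, and since $1 \notin J_G$ is a nonzero element of degree $0$, we get $\textup{v}_{J_{\mathcal{K}_n}}(J_G) = 0$. (Alternatively, $\mathcal{T}(\emptyset) = \{\emptyset\}$ in this case since $\min(G) = \{\emptyset\}$, so the empty transversal gives the empty product $1$, which has degree $0$; one should double-check the conventions make $\Delta_\emptyset$ empty here so that $\emptyset$ is a valid transversal.)

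For the main case $G \neq \mathcal{K}_n$, I would invoke the machinery just developed. By Proposition~\ref{ideal J emptyset}, the generators of $J_{\mathcal{T}(\emptyset)}$ are exactly the monomials $g_{C,D}$ with $C \amalg D \in \mathcal{D}_c(G)$; in particular the minimal degree among these generators is $\min\{|C \cup D| : C \cup D \in \mathcal{D}_c(G)\} = \gamma_c(G)$. By Theorem~\ref{corollary radical} (with $\mathcal{S} = \emptyset$, so $A_2 = \emptyset$ forced since all size-$2$ subsets are loops... wait — no: for $U_{1,n}$ every $2$-subset is dependent, hence lies in $\mathcal{D}(M(\emptyset))$, so indeed $A_2 = \emptyset$ for every $A \in \mathcal{T}(\emptyset)$, and $|A_1| + 2|A_2| = |A_1|$). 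Thus Theorem~\ref{corollary radical} immediately yields the upper bound $\textup{v}_{J_{\mathcal{K}_n}}(J_G) \leq \gamma_c(G)$.

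The remaining — and genuinely substantive — step is the reverse inequality $\textup{v}_{J_{\mathcal{K}_n}}(J_G) \geq \gamma_c(G)$, which by Theorem~\ref{corollary radical} would follow if $J_G + J_{\mathcal{T}(\emptyset)}$ is a radical ideal. So the heart of the proof is to show $J_G + J_{\mathcal{T}(\emptyset)}$ is radical. I expect this to be the main obstacle. My approach would be to show directly that $J_G + J_{\mathcal{T}(\emptyset)}$ equals its radical $(J_G : P_{\emptyset})$ by a dimension/primary-decomposition argument: by Theorem~\ref{theorem transversal}, $\sqrt{J_G + J_{\mathcal{T}(\emptyset)}} = (J_G : J_{\mathcal{K}_n}) = \bigcap_{\mathcal{S}' \in \min(G) \setminus \{\emptyset\}} P_{\mathcal{S}'}$, the intersection of all minimal primes of $J_G$ other than $P_\emptyset$. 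One then wants to verify that $J_G + J_{\mathcal{T}(\emptyset)}$ already has no embedded components and has the same minimal primes — e.g. by checking it is contained in each $P_{\mathcal{S}'}$ (clear, since each $g_{C,D}$ vanishes on $\mathbb{V}(P_{\mathcal{S}'})$ as $C \cup D$ meets every minimal cut) and then localizing at each $P_{\mathcal{S}'}$ to see the localization is already reduced, perhaps exploiting that modulo the linear forms $x_i, y_i$ ($i \in \mathcal{S}'$) the ideal becomes a sum of determinantal ideals of complete graphs on the components, which are prime. Alternatively, since the paper's Strategy~\ref{strategy} allows for the non-radical case, one could bypass radicality entirely: take a minimal homogeneous generating set of $(J_G : P_\emptyset)/J_G$, and argue via Theorem~\ref{results v number}(i) that some minimal generator achieving $\textup{v}$ has $(J_G : g) = P_\emptyset$ and degree $\geq \gamma_c(G)$ — this requires showing no element of degree $< \gamma_c(G)$ in $(J_G : P_\emptyset) \setminus J_G$ exists, which one can attempt by a direct argument: any such element, restricted to the irreducible variety $\mathbb{V}(P_{\mathcal{S}'})$ for a well-chosen minimal cut $\mathcal{S}'$ with $|\mathcal{S}'|$ small, must vanish, forcing the multidegree support to hit $\mathcal{S}'$, and combining over all minimal cuts forces the connected-dominating-set lower bound $\gamma_c(G)$ on the degree.
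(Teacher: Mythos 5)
Your decomposition of the problem is correct, and the easy parts (the $G = \mathcal{K}_n$ case, the description of $J_{\mathcal{T}(\emptyset)}$ from Proposition~\ref{ideal J emptyset}, the observation that $A_2 = \emptyset$ for every $A \in \mathcal{T}(\emptyset)$, and the resulting upper bound from Theorem~\ref{corollary radical}) are all handled correctly and match what the paper does. You also correctly identify that the entire remaining content is proving that $J_G + J_{\mathcal{T}(\emptyset)}$ is radical. However, at precisely that point the proposal stops being a proof and becomes a list of tentative directions, none of which is carried through. The paper closes this gap cheaply: it cites \textup{\cite[Lemma~3.5]{jaramillo2024connected}}, which exhibits a squarefree Gr\"obner basis for $J_G + \left( g_{C,D} : C \cup D \in \mathcal{D}_c(G),\ C \cap D = \emptyset \right)$; a squarefree initial ideal forces radicality (the same fact is later recorded as Corollary~\ref{is radical} for the minimal-cut case). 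So the paper's proof and yours share the same skeleton, but the paper has the one external input you are missing.

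Two remarks on your proposed workarounds. Your ``localize at each $P_{\mathcal{S}'}$'' sketch is a reasonable direction but is not obviously easier than exhibiting a Gr\"obner basis: after localization the contribution of $J_G$ itself is already nontrivial, and verifying that the localization is reduced (equivalently, that there are no embedded primes and each localized ideal is the maximal ideal) requires essentially the same combinatorial control one gets from the Gr\"obner basis. Your second alternative has a genuine conceptual problem: you claim that an element $g \in (J_G : P_\emptyset)$ with $g \notin J_G$ ``must vanish on $\mathbb{V}(P_{\mathcal{S}'})$, forcing the multidegree support to hit $\mathcal{S}'$.'' Vanishing on $\mathbb{V}(P_{\mathcal{S}'})$ means $g \in P_{\mathcal{S}'} = (x_i, y_i : i \in \mathcal{S}') + J_{\widetilde{G}_1} + \cdots$, and the determinantal part contributes elements (such as $f_{j,k}$ for $j,k$ in the same component) whose supports miss $\mathcal{S}'$ entirely. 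So ``support must hit $\mathcal{S}'$'' is false as stated, and the degree bound $\geq \gamma_c(G)$ does not follow from that argument. In short: the structure is right, the gap is the radicality, and the proposed repairs do not yet work.
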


\begin{proof}
Suppose that $G \neq \mathcal{K}_n$. By Proposition~\ref{ideal J emptyset}, we have
\[
J_{\mathcal{T}(\emptyset)} = \left( g_{C,D} : C \cup D \in \mathcal{D}_{c}(G),\ C \cap D = \emptyset \right),
\]
which gives
\[
J_G + J_{\mathcal{T}(\emptyset)} = J_G + \left( g_{C,D} : C \cup D \in \mathcal{D}_{c}(G),\ C \cap D = \emptyset \right).
\]
In~\textup{\cite[Lemma~3.5]{jaramillo2024connected}}, the authors construct a squarefree Gröbner basis for this ideal, which implies that it is radical. Hence, by Theorem~\ref{corollary radical}, it follows that
\[
\JK = \min \{\, |A_1| + 2|A_2| : A \in \mathcal{T}(\emptyset) \,\}.
\]
Moreover, Proposition~\ref{ideal J emptyset} shows that $\mathcal{T}(\emptyset) = \DG$, and we conclude that
\[
\JK = \gamma_c(G).
\]

Finally, if $G = \mathcal{K}_n$ is the complete graph, we clearly have $\JK = 0$.
\end{proof}

\section{V-numbers associated to minimal cuts}\label{section minimal cuts}

In this section, we fix a connected graph $G$ on $[n]$ along with a minimal cut $\mathcal{S}$ (see Definition~\ref{minimal cuts}). Using the reduction technique developed in Subsection~\ref{reduction with transversals}, we compute the algebraic invariant $\VPJ$. We also show that this invariant is governed by purely combinatorial properties of $G$, specifically involving connected domination numbers. Throughout this section, we fix a minimal cut $\mathcal{S} \subset [n]$ and introduce the notation that will be used in what follows.

\begin{notation}\normalfont
We introduce the following terminology:
\begin{itemize}
    \item Let $G_{1}$ and $G_{2}$ be the connected components of the induced subgraph on $[n] \setminus \mathcal{S}$, and denote by $V_{1}$ and $V_{2}$ their respective vertex sets.
\item A subset $\mathcal{S}'\subset [n]$ is said to be \emph{nice} if there do not exist vertices $i \in V_{1} \setminus \mathcal{S}'$ and $j \in V_{2} \setminus \mathcal{S}'$ such that $i$ and $j$ lie in the same connected component of $G_{[n] \setminus \mathcal{S}'}$. Observe that if $\mathcal{S}'$ is nice, we have $\mathcal{S}'\not \subset \mathcal{S}$.

\item Let $\Delta$ denote the collection
\begin{equation}\label{delta}
    \Delta := \left\{ \mathcal{S}' \setminus \mathcal{S} : \mathcal{S}' \in \min(G)\setminus \{\mathcal{S}\}, \ \text{and $\mathcal{S}'$ is nice} \right\},
\end{equation}
and let $T$ be the set of all transversals of $\Delta$. 
\end{itemize}
\end{notation}

\begin{comment}
Throughout this subsection, we work under the following assumption.

\begin{assumption}\label{assumption 1}
For each $i = 1, 2$, not every vertex of $V_{i}$ is adjacent to all vertices in $V_{i} \cup \mathcal{S}$.
\end{assumption}

The case where this assumption fails will be addressed separately in Subsection~\ref{GV1 and GV2 complete}.
\end{comment}

Our first step in computing the invariant $\VPJ$ is to describe the set $\mathcal{T}(\mathcal{S})$ of transversals of the collection
\[
    \left\{ \mathcal{D}(M(\mathcal{S}')) \setminus \mathcal{D}(M(\mathcal{S})) : \mathcal{S}' \in \min(G) \setminus \{ \mathcal{S} \} \right\}.
\]
To this end, we begin by establishing a result concerning the transversal $T$, which, as we shall later see, is closely related to the set $\mathcal{T}(\mathcal{S})$. Before doing so, however, we require the following lemma concerning minimal $k$-cuts.

\begin{comment}
\begin{lemma}
Let $B \subseteq [n] \setminus \mathcal{S}$. If 
\[
B \cap V_{i} \in \mathcal{D}_{c}(G_{i}) \quad \text{for } \ i = 1, 2,
\]
then $B \in T_{1}$.
\end{lemma}

\begin{proof}
Suppose that $B$ satisfies the conditions of the lemma and set $B_{i}:=B \cap V_{i}$ for $i=1,2$. To show that $B\in T_{1}$, we must prove that $B\cap \mathcal{S}'\neq \emptyset$ for every $\CS' \in \min(G)$ with $\CS'\supsetneq \CS$. Suppose, for contradiction, that there exists such $\CS'$ such that $B \cap S = \emptyset$. 

Since $\CS'\supsetneq \CS$ and $\CS\in {\min}_{2}(G)$ it is clear that $\CS'\not \in {\min}_{2}(G)$. Hence, since $\CS' \in \min(G)$, we have that the induced subgraph $G_{[n]\setminus \CS'}$ has at least three connected components.

On the other hand, since $B \cap S = \emptyset$, it follows that $B_{i}\subset V_{i}\setminus \CS'$ for $i=1,2$. Consequently, using the $B_{1}$ and $B_{2}$ are connected and dominant we obtain that the induced subgraphs $G_{V_{1}\setminus \CS'
}$ and $G_{V_{2}\setminus \CS'
}$ are connected. Therefore, $G_{[n]\setminus \CS'}$ has at most two connected components, leading to a contradiction.
\end{proof}
\end{comment}

\begin{lemma}\label{refinement}
Let $A \subset [n]$ be a subset such that the induced subgraph $G_{[n] \setminus A}$ has connected components $G_{W_1}, \ldots, G_{W_k}$, where $k\geq 2$ and each $W_i$ denotes the vertex set of a component. Then there exists a subset $B \subset A$ such that $B \in \min(G)$, and each $W_i$ remains entirely contained within a distinct connected component of $G_{[n] \setminus B}$.
\end{lemma}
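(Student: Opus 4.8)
The plan is to obtain $B$ by a greedy shrinking of $A$: remove vertices of $A$ one at a time for as long as the components $W_1,\dots,W_k$ stay separated, and argue that whatever minimal separator is reached must be a minimal $j$-cut. Concretely, I would consider the family
\[
\mathcal{F} := \bigl\{\, B \subseteq A \ : \ W_1,\dots,W_k \text{ lie in pairwise distinct connected components of } G_{[n]\setminus B} \,\bigr\}.
\]
Since the connected components of $G_{[n]\setminus A}$ are exactly $G_{W_1},\dots,G_{W_k}$, the set $A$ itself lies in $\mathcal{F}$, so $\mathcal{F}$ is a nonempty finite family; I pick a minimal element $B$ of $\mathcal{F}$ with respect to inclusion and claim that this $B$ satisfies the conclusion of the lemma.

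Most of the requirements on $B$ are immediate. The conclusion that each $W_i$ lies entirely in a distinct connected component of $G_{[n]\setminus B}$ is exactly the membership $B \in \mathcal{F}$ (each $W_i$ sits inside a single component because $G_{W_i}$ is connected). To see that $B$ can belong to $\min(G)$ at all: $[n]\setminus A$ is nonempty, being the disjoint union of the nonempty sets $W_1,\dots,W_k$, so $B \subseteq A \subsetneq [n]$; and $B \neq \emptyset$, since otherwise $G = G_{[n]\setminus B}$ would be connected, forcing all the $W_i$ into one component and contradicting $k \geq 2$. Finally, because the components of $G_{[n]\setminus B}$ containing $W_1,\dots,W_k$ are pairwise distinct, $G_{[n]\setminus B}$ has some number $j \geq k \geq 2$ of connected components. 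Hence it remains only to prove that $B$ is a \emph{minimal} $j$-cut.

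By Definition~\ref{minimal cuts}, this means showing that every $i \in B$ is a cut point of $H_i := G_{([n]\setminus B)\cup\{i\}}$, i.e. that $H_i$ has strictly fewer connected components than $H_i$ with $i$ removed, which is $G_{[n]\setminus B}$. Suppose not, say $i_0 \in B$ is not such a cut point. Re-inserting the single vertex $i_0$ into $G_{[n]\setminus B}$ to form $H_{i_0}$ can only merge (together with $i_0$) those components of $G_{[n]\setminus B}$ in which $i_0$ has a neighbour, or else create the new isolated component $\{i_0\}$; since $i_0$ is not a cut point of $H_{i_0}$, it has neighbours in at most one component of $G_{[n]\setminus B}$, so no two components of $G_{[n]\setminus B}$ get merged. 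As $W_1,\dots,W_k \subseteq [n]\setminus A \subseteq [n]\setminus B$ and $i_0 \notin \bigcup_m W_m$, the components of $G_{[n]\setminus B}$ that contain $W_1,\dots,W_k$ remain pairwise distinct components of $H_{i_0} = G_{[n]\setminus(B\setminus\{i_0\})}$. Hence $B\setminus\{i_0\}\in\mathcal{F}$, contradicting the minimality of $B$. Therefore every $i \in B$ is a cut point of $G_{([n]\setminus B)\cup\{i\}}$, so $B$ is a minimal $j$-cut with $j\geq 2$, i.e. $B \in \min(G)$, completing the proof.

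I do not expect a serious obstacle here: the core is a minimality/exchange argument. The one place to be careful is the precise reading of \emph{cut point} in Definition~\ref{minimal cuts} and the accompanying bookkeeping of how the partition into connected components changes when a vertex is added back to $[n]\setminus B$ — in particular the degenerate case in which $i_0$ has no neighbour in $[n]\setminus B$, so that $\{i_0\}$ becomes a new isolated component and the component count goes up rather than down; in that case $i_0$ is again not a cut point, and the $W_i$-containing components are still left untouched, so the argument goes through.
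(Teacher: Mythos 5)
Your proposal is correct and takes essentially the same approach as the paper: the paper performs the greedy shrinking as an explicit iteration (repeatedly deleting a vertex of the current separator that is adjacent to at most one component, and arguing this terminates before reaching $\emptyset$ by connectedness), while you package the same idea by picking a minimal element of the family $\mathcal{F}$ and deriving the cut-point property of every $i\in B$ by contradiction. The bookkeeping about how the component partition changes when $i_0$ is re-inserted, including the degenerate $t=0$ case, matches the paper's reasoning.
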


\begin{proof}
Suppose first that every vertex $i \in A$ is adjacent to at least two connected components of $G_{[n] \setminus A}$. In this case, each $i\in A$ is a cut point of the graph $G_{([n]\setminus A)\cup \{i\}}$, %the removal of $A$ separates the graph in a minimal way, 
so $A \in \min(G)$, and we are done.

Otherwise, there exists a vertex $i_1 \in A$ that is adjacent to at most one connected component of $G_{[n] \setminus A}$. Define $B_1 := A \setminus \{i_1\}$. Since $i_1$ does not reconnect distinct components, each $W_i$ remains entirely contained within a distinct connected component of $G_{[n] \setminus B_1}$.

We proceed inductively: at step $k$, if there exists a vertex $i_k \in B_{k-1}$ adjacent to at most one connected component of $G_{[n] \setminus B_{k-1}}$, we set $B_k := B_{k-1} \setminus \{i_k\}$. In each step, the property that each $W_i$ lies entirely within a distinct component is preserved.

This process yields a strictly decreasing chain of subsets:
\[
A \supsetneq B_1 \supsetneq B_2 \supsetneq \cdots
\]
The process terminates by finiteness, and not with the empty set since $G$ is connected. 
Hence, there exists $\ell$ such that every vertex $i \in B_\ell$ is adjacent to at least two connected components of $G_{[n] \setminus B_\ell}$.  
Thus $B_\ell \in \min(G)$, and setting $B := B_\ell$ completes the proof.
\end{proof}

The following lemma provides a description of the set of transversals $T$, which will later be useful in computing $\mathcal{T}(\mathcal{S})$.

\begin{lemma}\label{transversal for nice subsets}
Let $B \subseteq [n] \setminus \mathcal{S}$ such that $B\cap V_{i}\neq \emptyset$ for $i=1,2$. Then $B \in T$ if and only if
\[
    B \cap V_{i} \in \mathcal{D}_{c}\big(G_{V_{i} \cup \mathcal{S}}\big) \quad \text{for each } i = 1, 2.
\]
\end{lemma}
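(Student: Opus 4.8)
I will prove the two implications separately, in each case translating ``$B$ is a transversal of $\Delta$'' into a statement about connected domination of the graphs $G_{V_i\cup\mathcal{S}}$, and invoking Lemma~\ref{refinement} together with Lemma~\ref{transversal and connected dominant}. Throughout, three elementary observations about the minimal cut $\mathcal{S}$ are used: there are no edges of $G$ between $V_1$ and $V_2$; every $j\in\mathcal{S}$ has a neighbour in $V_1$ and a neighbour in $V_2$, so each $G_{V_i\cup\mathcal{S}}$ is connected and $\mathcal{D}_c(G_{V_i\cup\mathcal{S}})$ is meaningful; and $B\cap\mathcal{S}=\emptyset$, so $B\cap(\mathcal{S}'\setminus\mathcal{S})=B\cap\mathcal{S}'$ for every $\mathcal{S}'\in\min(G)$. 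Write $B_i:=B\cap V_i$.

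\textbf{The implication ($\Leftarrow$).} Assume $B_i\in\mathcal{D}_c(G_{V_i\cup\mathcal{S}})$ for $i=1,2$, fix a nice $\mathcal{S}'\in\min(G)\setminus\{\mathcal{S}\}$, and suppose for contradiction that $B\cap\mathcal{S}'=\emptyset$. If there is a vertex $j\in\mathcal{S}\setminus\mathcal{S}'$, then $B_1$ dominates $j$ in $G_{V_1\cup\mathcal{S}}$ and $B_2$ dominates $j$ in $G_{V_2\cup\mathcal{S}}$, giving $b_1\in B_1$ and $b_2\in B_2$ with $\{b_1,j\},\{b_2,j\}\in E(G)$; since $b_1,j,b_2\notin\mathcal{S}'$, the path $b_1\,\text{--}\,j\,\text{--}\,b_2$ lies in $G_{[n]\setminus\mathcal{S}'}$ and joins a vertex of $V_1\setminus\mathcal{S}'$ to one of $V_2\setminus\mathcal{S}'$, contradicting niceness of $\mathcal{S}'$. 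Otherwise $\mathcal{S}\subsetneq\mathcal{S}'$; pick $i\in\mathcal{S}'\setminus\mathcal{S}$, say $i\in V_1$. As there are no edges between $V_1$ and $V_2$, $G_{[n]\setminus\mathcal{S}'}=G_{V_1\setminus\mathcal{S}'}\sqcup G_{V_2\setminus\mathcal{S}'}$, and since $i$ is a cut point of $G_{([n]\setminus\mathcal{S}')\cup\{i\}}$ it is already one of $G_{(V_1\setminus\mathcal{S}')\cup\{i\}}$, so $G_{V_1\setminus\mathcal{S}'}$ is disconnected. The hypothesis restricts to $B_1\in\mathcal{D}_c(G_{V_1})$, hence by Lemma~\ref{refinement} there is $B''\subseteq\mathcal{S}'\cap V_1$ with $B''\in\min(G_{V_1})\setminus\{\emptyset\}$, and by Lemma~\ref{transversal and connected dominant} applied to $G_{V_1}$ we get $B_1\cap B''\neq\emptyset$, so $B_1\cap\mathcal{S}'\neq\emptyset$, a contradiction. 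The degenerate cases $V_1\setminus\mathcal{S}'=\emptyset$ and $G_{V_1}$ complete are disposed of directly using $B_1\neq\emptyset$, as is the case $i\in V_2$.

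\textbf{The implication ($\Rightarrow$).} Assume $B$ is a transversal of $\Delta$; by symmetry it suffices to prove $B_1\in\mathcal{D}_c(H_1)$ where $H_1:=G_{V_1\cup\mathcal{S}}$. If $H_1$ is complete this follows from $B_1\neq\emptyset$; otherwise, by Lemma~\ref{transversal and connected dominant} it is enough to show $B_1$ meets every minimal $k$-cut $\mathcal{S}_1''$ of $H_1$. Fix such a $\mathcal{S}_1''$. First one checks $\emptyset\neq\mathcal{S}_1''\cap V_1$ and $V_1\not\subseteq\mathcal{S}_1''$: if $\mathcal{S}_1''\subseteq\mathcal{S}$ then $H_1\setminus\mathcal{S}_1''\supseteq G_{V_1}$ is connected and every remaining vertex of $\mathcal{S}$ has a neighbour there, so $H_1\setminus\mathcal{S}_1''$ would be connected; and if $V_1\subseteq\mathcal{S}_1''$ then $B_1\subseteq\mathcal{S}_1''$ already. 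Next, from $\mathcal{S}_1''$ one manufactures a candidate cut of $G$ agreeing with $\mathcal{S}_1''$ on the $V_1$-side, removing all of $\mathcal{S}$, and removing $V_2\setminus B_2$: the set $A:=\mathcal{S}\cup(\mathcal{S}_1''\cap V_1)\cup(V_2\setminus B_2)$, for which $G_{[n]\setminus A}=G_{V_1\setminus\mathcal{S}_1''}\sqcup G_{B_2}$ has at least two components. Applying Lemma~\ref{refinement} refines $A$ to some $\mathcal{S}^\circ\subseteq A$ with $\mathcal{S}^\circ\in\min(G)$, keeping the components of $G_{[n]\setminus A}$ mutually separated; one then argues $\mathcal{S}^\circ$ can be taken nice and different from $\mathcal{S}$, and that $B\cap(\mathcal{S}^\circ\setminus\mathcal{S})\subseteq B_1\cap\mathcal{S}_1''$. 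Granting this, the transversal hypothesis yields $\emptyset\neq B\cap(\mathcal{S}^\circ\setminus\mathcal{S})\subseteq B_1\cap\mathcal{S}_1''$, which is the desired conclusion.

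\textbf{Expected main obstacle.} The delicate part is the ($\Rightarrow$) direction. The refinement supplied by Lemma~\ref{refinement} only guarantees that the prescribed components stay mutually separated; it does not prevent a vertex of $\mathcal{S}$ remaining in $G_{[n]\setminus\mathcal{S}^\circ}$ from re-linking a component inside $V_1$ to one inside $V_2$, so $\mathcal{S}^\circ$ may fail to be nice, nor does it stop $\mathcal{S}^\circ$ from collapsing to $\mathcal{S}$ itself. Overcoming this means controlling which vertices the refinement deletes --- in particular never deleting a vertex of $\mathcal{S}$ adjacent to both sides, nor deleting all of $\mathcal{S}_1''\cap V_1$ --- or exploiting additional structure (such as the hypothesis, used elsewhere in this section, that no vertex of $V_i$ is universal in $G_{V_i\cup\mathcal{S}}$) to arrange $\mathcal{S}^\circ\supseteq\mathcal{S}$, which makes niceness automatic. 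This is where most of the work of the proof concentrates.
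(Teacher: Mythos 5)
Your $(\Leftarrow)$ direction is complete and correct, though it takes a slightly different route than the paper. The paper splits on $\mathcal{S}'\supsetneq\mathcal{S}$ versus $\mathcal{S}'\not\supset\mathcal{S}$: in the first case it counts components (at least three since $\mathcal{S}'\notin\min_2(G)$, yet at most two since $G_{V_1\setminus\mathcal{S}'}$ and $G_{V_2\setminus\mathcal{S}'}$ are connected), and in the second it shows outright that $G_{[n]\setminus\mathcal{S}'}$ is connected, so niceness of $\mathcal{S}'$ is never used. Your version uses niceness directly via the path $b_1\,\text{--}\,j\,\text{--}\,b_2$ and, in the $\mathcal{S}\subsetneq\mathcal{S}'$ case, pushes the cut down into $G_1$ and invokes Lemma~\ref{refinement} and Lemma~\ref{transversal and connected dominant} inside $G_1$. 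Both work; yours is arguably the more local argument.

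The $(\Rightarrow)$ direction, however, has a genuine gap, and you have correctly diagnosed exactly where: after applying Lemma~\ref{refinement} to your set $A=\mathcal{S}\cup(\mathcal{S}_1''\cap V_1)\cup(V_2\setminus B_2)$, nothing guarantees the resulting $\mathcal{S}^\circ$ is nice, is different from $\mathcal{S}$, or satisfies $B\cap(\mathcal{S}^\circ\setminus\mathcal{S})\subseteq B_1\cap\mathcal{S}_1''$. Your suggestion of assuming that no vertex of $V_i$ is universal in $G_{V_i\cup\mathcal{S}}$ is not available: that assumption appears only in a commented-out block and is not part of the paper. The paper sidesteps the problem by never working with mixed cuts $\mathcal{S}_1''$ of $G_{V_1\cup\mathcal{S}}$ at all. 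Instead it breaks $(\Rightarrow)$ into two steps. Step~1 proves only $B_1\in\mathcal{D}_c(G_1)$, so one considers a minimal $2$-cut $C$ of $G_1$ (entirely inside $V_1$) and applies Lemma~\ref{refinement} with $A=\mathcal{S}\cup C$; because $G_{[n]\setminus A}$ then has exactly three components $W_1,W_2,V_2$, each $i\in C$ is adjacent to both $W_1$ and $W_2$, forcing $C\subset F$, which in turn makes $V_1\setminus F=W_1\cup W_2$, from which niceness of $F$ is immediate, and $F\setminus\mathcal{S}=C$ yields the transversal contradiction. Step~2 proves $B_1$ dominates $\mathcal{S}$ separately: if some $x\in\mathcal{S}$ has no neighbour in $B_1$, one takes $A=[n]\setminus(B_1\cup V_2\cup\{x\})$, whose complement's two components are $B_1$ and $V_2\cup\{x\}$, applies Lemma~\ref{refinement}, checks niceness of the resulting $F$ using that $B_1$ is dominant in $G_1$, notes $x\notin F$ so $F\neq\mathcal{S}$, and derives the contradiction from $F\subset A$ being disjoint from $B$. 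It is precisely this two-step decomposition that resolves the obstacle you flagged.
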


\begin{proof}
Set $B_i := B \cap V_i$ for $i = 1,2$. Assume first that $B_i \in \mathcal{D}_{c}(G_{V_i \cup \mathcal{S}})$ for $i = 1,2$. To show that $B \in T$, it suffices to prove that
\[
B \cap \mathcal{S}' \neq \emptyset
\]
for every \emph{nice} set $\mathcal{S}' \in \min(G)$. Suppose, for the sake of contradiction, that there exists such a set $\mathcal{S}'$ with $B \cap \mathcal{S}' = \emptyset$. We distinguish two cases.

\smallskip
\noindent\textbf{Case 1: $\mathcal{S}' \supsetneq \mathcal{S}$.} Since $\mathcal{S} \in {\min}_{2}(G)$, it follows that $\mathcal{S}' \notin {\min}_{2}(G)$. Therefore, because $\mathcal{S}' \in \min(G)$, the graph $G_{[n] \setminus \mathcal{S}'}$ must have at least three connected components.

On the other hand, the assumption $B \cap \mathcal{S}' = \emptyset$ implies that $B_i \subset V_i \setminus \mathcal{S}'$ for $i = 1,2$. Since $B_1$ and $B_2$ are connected and dominant, we conclude that $G_{V_1 \setminus \mathcal{S}'}$ and $G_{V_2 \setminus \mathcal{S}'}$ are connected. Hence, $G_{[n] \setminus \mathcal{S}'}$ has at most two connected components, a contradiction.

\smallskip
\noindent\textbf{Case 2: $\mathcal{S}' \not\supset \mathcal{S}$.} Again, $B \cap \mathcal{S}' = \emptyset$ implies $B_i \subset V_i \setminus \mathcal{S}'$ for $i = 1,2$. Since $B_1$ and $B_2$ are connected and dominant in $G_{V_1 \cup \mathcal{S}}$ and $G_{V_2 \cup \mathcal{S}}$, the induced subgraphs
\[
G_{(V_1 \cup \mathcal{S}) \setminus \mathcal{S}'} \quad \text{and} \quad G_{(V_2 \cup \mathcal{S}) \setminus \mathcal{S}'}
\]
are connected.

Moreover, since $B_{1}$ and $B_{2}$ are dominant, every vertex in $\mathcal{S} \setminus \mathcal{S}'$ is adjacent to some vertex in $B_1$ and to some vertex in $B_2$. It follows that $G_{[n] \setminus \mathcal{S}'}$ is connected, contradicting the assumption that $\mathcal{S}' \in \min(G)$.

\smallskip
In both cases, we arrive at a contradiction. Hence, $B \cap \mathcal{S}' \neq \emptyset$ for all nice $\mathcal{S}' \in \min(G)$, and thus $B \in T$, as desired.

For the converse, assume that $B \in T$. We aim to show that $B_i \in \mathcal{D}_{c}(G_{V_i \cup \mathcal{S}})$ for $i = 1, 2$. Without loss of generality, we carry out the proof for $B_1$, which we divide into two steps.

\smallskip

\noindent\textbf{Step 1.} We first show that $B_1 \in \mathcal{D}_{c}(G_1)$.

\smallskip

We may assume that $G_1$ is not complete, since otherwise $B_1 \in \mathcal{D}_{c}(G_1)$ would hold automatically, given that $B_{1} \neq \emptyset$ by the assumption of the lemma. Then, by Lemma~\ref{transversal and connected dominant}, it suffices to verify that $B_1 \cap C \neq \emptyset$ for every nonempty $C \in \min(G_1)$. Assume for contradiction that $B_1 \cap C = \emptyset$ for some such $C$. Moreover, we may assume that $C$ is minimal, i.e., $C \in {\min}_2(G_1)$.

Define $A := \mathcal{S} \cup C$. Observe that $G_{[n] \setminus A}$ has exactly three connected components: $G_2$ and the two components of $G_{V_1 \setminus C}$, whose vertex sets we denote by $W_1$ and $W_2$.

By Lemma~\ref{refinement}, there exists a subset $F \subset A$ such that $F \in \min(G)$ and each of the sets $W_1$, $W_2$, and $V_2$ is entirely contained within a distinct component of $G_{[n] \setminus F}$.

\smallskip

\noindent\textbf{Claim 1.} $C \subset F$.

\smallskip

\noindent Suppose, for contradiction, that $i \in C \setminus F$ for some $i$. Since $C \in {\min}_2(G_1)$, the vertex $i$ is a cut point of $G_{(V_1 \setminus C) \cup \{i\}}$, which implies that $i$ is adjacent to some vertex in $W_{1}$ and to some vertex in $W_{2}$. Consequently, $i$ ensures that all vertices in $W_1 \cup W_2$ belong to the same connected component of $G_{[n] \setminus F}$. This contradicts the definition of $F$.

\smallskip

\noindent\textbf{Claim 2.} $F$ is nice.

\smallskip

\noindent Suppose not. Then there exist vertices $i \in V_1 \setminus F$ and $j \in V_2$ that lie in the same connected component of $G_{[n] \setminus F}$. Since $C \subset F$, we have $V_1 \setminus F = V_1 \setminus C = W_1 \cup W_2$. Without loss of generality, suppose $i \in W_1$. Then all the vertices in $W_1 \cup V_2$ lie in the same component of $G_{[n] \setminus F}$, contradicting the definition of $F$.

\smallskip

Since $F$ is nice, $F \in \min(G)$ and $B \in T$, it follows that $B \cap F \neq \emptyset$. From Claim~1, $F \setminus \mathcal{S} = C$, and since $B\cap \mathcal{S}=\emptyset$, it follows that $B \cap C \neq \emptyset$. This contradicts our assumption that $B_1 \cap C = \emptyset$. Therefore, $B_1 \in \mathcal{D}_{c}(G_1)$.

\smallskip

We have thus shown that $B_1 \in \mathcal{D}_{c}(G_1)$. It remains to show that $B_1 \in \mathcal{D}_{c}(G_{V_1 \cup \mathcal{S}})$, which by symmetry implies $B_2 \in \mathcal{D}_{c}(G_{V_2 \cup \mathcal{S}})$.

\smallskip

\noindent\textbf{Step 2.} $B_1 \in \mathcal{D}_{c}(G_{V_1 \cup \mathcal{S}})$.

\smallskip

Since we already know that $B_1 \in \mathcal{D}_{c}(G_1)$, it suffices to verify that each $x \in \mathcal{S}$ is adjacent to some vertex in $B_1$. Suppose not, and let $x \in \mathcal{S}$ be such that $\{x, b\} \notin E(G)$ for all $b \in B_1$. Define $A := [n] \setminus (B_1 \cup V_2 \cup \{x\})$.

Note that, since $x$ is not adjacent to any vertex of $B_{1}$ but is adjacent to some vertex of $V_{2}$, the sets
\[
B_1 \quad \text{and} \quad V_2\cup \{x\}
\]
are the vertex sets of the two connected components of $G_{[n] \setminus A} = G_{B_1 \cup V_2 \cup \{x\}}$. Then by Lemma~\ref{refinement}, there exists $F \subset A$ such that $F \in \min(G)$ and each of $B_{1}$ and $V_{2}\cup \{x\}$ remains entirely contained within a distinct component of $G_{[n] \setminus F}$.

\smallskip

\noindent\textbf{Claim 3.} $F$ is nice.

\smallskip
\noindent
Assume, for contradiction, that there exist vertices $r \in V_1 \setminus F$ and $s \in V_2$ that lie in the same connected component of $G_{[n] \setminus F}$. Since $B_1 \in \mathcal{D}_c(G_1)$, the vertex $r$ is adjacent to some vertex in $B_1$, which connects a vertex of $B_{1}$ with a vertex of $V_{2}\cup \{x\}$. As both $G_{B_1}$ and $G_{V_2\cup \{x\}}$ are connected subgraphs and $B_{1},V_{2}\cup \{x\}\subset [n]\setminus F$, it follows that all vertices in $B_1 \cup V_2\cup \{x\}$ belong to the same connected component of $G_{[n] \setminus F}$, contradicting the definition of $F$.

\smallskip
%Moreover, since $F\in \min(G)$ and $\mathcal{S}\in {\min}_{2}(G)$, we deduce that $F \not\subset \mathcal{S}$. 
Thus, since $F$ is nice, $F \in \min(G)\setminus \{\mathcal{S}\}$, it follows that $F \in \Delta$, as defined in Equation~\eqref{delta}. Moreover, because $F \subset A$, we have $F \cap B = \emptyset$, contradicting the assumption that $B \in T$.

\smallskip
This contradiction shows that if $B \in T$, then each $B_i \in \mathcal{D}_c(G_{V_i \cup \mathcal{S}})$, as claimed, completing the proof.
\end{proof}

We are now in a position to describe the set of transversals $\mathcal{T}(\mathcal{S})$ of the hypergraph
\[
    \Delta_{\mathcal{S}} = \left\{ \mathcal{D}(M(\mathcal{S}')) \setminus \mathcal{D}(M(\mathcal{S})) : \mathcal{S}' \in \min(G) \setminus \{ \mathcal{S} \} \right\} 
    \subseteq \textstyle (\binom{[n]}{1} \cup \binom{[n]}{2})\setminus \mathcal{D}(M(\mathcal{S})).
\]
Observe that, by definition, every such transversal is disjoint from $\mathcal{D}(M(\mathcal{S}))$.
Before doing so, we introduce some necessary notation.

\begin{notation}
Let $A$ be a collection of subsets of $[n]$. We define the \emph{support} of $A$ as
\[
\supp(A) = \bigcup_{e \in A} e.
\]
\end{notation}

For the following proposition, recall the notions of $A_{1}$ and $A_{2}$ from Notation~\ref{important notation}.

\begin{proposition}\label{characterization TS}
A subset $A \subset \textstyle (\binom{[n]}{1} \cup \binom{[n]}{2})\setminus \mathcal{D}(M(\mathcal{S}))$ belongs to $\mathcal{T}(\mathcal{S})$ if and only if the following two conditions are satisfied:
\begin{enumerate}
\item[{\rm (i)}] $\supp(A) \cap V_i\in \mathcal{D}_c(G_{V_i \cup \mathcal{S}})$ for each $i = 1, 2$.
\item[{\rm (ii)}] $A_{2}\neq \emptyset$.
%There exist vertices $i \in V_1$ and $j \in V_2$ such that $\{i, j\} \in A$.
\end{enumerate}
\end{proposition}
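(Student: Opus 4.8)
The plan is to prove the two implications separately, invoking Lemma~\ref{transversal for nice subsets} to translate membership in the transversal set $T$ of the \emph{nice} collection $\Delta$ into the connected-domination statement in~(i). Throughout, write $E_{\mathcal{S}'} := \mathcal{D}(M(\mathcal{S}')) \setminus \mathcal{D}(M(\mathcal{S}))$ for the member of $\Delta_{\mathcal{S}}$ attached to $\mathcal{S}' \in \min(G) \setminus \{\mathcal{S}\}$. I would first record some bookkeeping. Since $A$ is disjoint from $\mathcal{D}(M(\mathcal{S}))$, every singleton $\{i\} \in A_1$ has $i \notin \mathcal{S}$, and every pair $\{i,j\} \in A_2$ is a \emph{crossing pair}: $i,j \notin \mathcal{S}$ with one endpoint in $V_1$ and the other in $V_2$. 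Hence $\supp(A) \subseteq V_1 \cup V_2 \subseteq [n] \setminus \mathcal{S}$, and $A_2 \neq \emptyset$ forces $\supp(A) \cap V_i \neq \emptyset$ for $i = 1,2$. Moreover, for each $\mathcal{S}'$ the set $E_{\mathcal{S}'}$ consists of the singletons $\{i\}$ with $i \in \mathcal{S}' \setminus \mathcal{S}$ together with the crossing pairs $\{i,j\}$ that are dependent in $M(\mathcal{S}')$, i.e.\ with $i \in \mathcal{S}'$, $j \in \mathcal{S}'$, or $i,j$ in a common connected component of $G_{[n]\setminus\mathcal{S}'}$. In particular, since $\mathcal{S} \neq \emptyset$ we have $\emptyset \in \min(G)\setminus\{\mathcal{S}\}$, and by Remark~\ref{connected and uniform} the member $E_{\emptyset}$ is exactly the set of \emph{all} crossing pairs; so $A$ meets $E_{\emptyset}$ if and only if $A_2 \neq \emptyset$. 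This already shows that~(ii) holds for every $A \in \mathcal{T}(\mathcal{S})$, and that~(ii) guarantees $A \cap E_{\emptyset} \neq \emptyset$.

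For the forward implication, suppose $A \in \mathcal{T}(\mathcal{S})$; then~(ii) holds by the above. To obtain~(i) I would first prove $\supp(A) \in T$, that is, $\supp(A) \cap (\mathcal{S}' \setminus \mathcal{S}) \neq \emptyset$ for every nice $\mathcal{S}' \in \min(G) \setminus \{\mathcal{S}\}$. Choose $e \in A \cap E_{\mathcal{S}'}$: if $e = \{i\}$ then $i \in \mathcal{S}' \setminus \mathcal{S}$ and $i \in \supp(A)$; if $e = \{i,j\}$ is a crossing pair with $i \in V_1$, $j \in V_2$, then $i,j \notin \mathcal{S}$, and were both $i,j \notin \mathcal{S}'$ the niceness of $\mathcal{S}'$ would put $i$ and $j$ in distinct components of $G_{[n]\setminus\mathcal{S}'}$, contradicting $\{i,j\} \in E_{\mathcal{S}'}$; hence $i \in \mathcal{S}'$ or $j \in \mathcal{S}'$, and the corresponding vertex lies in $\supp(A) \cap (\mathcal{S}' \setminus \mathcal{S})$. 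So $\supp(A) \in T$; since~(ii) gives $\supp(A) \cap V_i \neq \emptyset$ for $i=1,2$, Lemma~\ref{transversal for nice subsets} yields $\supp(A) \cap V_i \in \mathcal{D}_c(G_{V_i \cup \mathcal{S}})$, which is~(i).

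For the converse, assume~(i) and~(ii) and set $U_i := \supp(A) \cap V_i$, which is nonempty by~(ii) and lies in $\mathcal{D}_c(G_{V_i \cup \mathcal{S}})$ by~(i). I must check $A \cap E_{\mathcal{S}'} \neq \emptyset$ for every $\mathcal{S}' \in \min(G) \setminus \{\mathcal{S}\}$. If $\supp(A) \cap \mathcal{S}' \neq \emptyset$, any member of $A$ containing a vertex of $\mathcal{S}'$ is dependent in $M(\mathcal{S}')$, hence lies in $E_{\mathcal{S}'}$. Suppose instead $\supp(A) \cap \mathcal{S}' = \emptyset$. Then each $U_i \subseteq V_i \setminus \mathcal{S}'$ is connected, so it lies in a single connected component $K_i$ of $G_{[n]\setminus\mathcal{S}'}$, and I claim $K_1 = K_2$. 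If $\mathcal{S} \not\subseteq \mathcal{S}'$, pick $x \in \mathcal{S} \setminus \mathcal{S}'$; since $U_i$ dominates $V_i \cup \mathcal{S} \ni x$, it has a neighbor $u_i \in U_i$ of $x$, and $x, u_1, u_2 \notin \mathcal{S}'$, so $u_1 - x - u_2$ is a path in $G_{[n]\setminus\mathcal{S}'}$ and $K_1 = K_2$. If $\mathcal{S} \subsetneq \mathcal{S}'$, I would rule this subcase out: $U_i \subseteq V_i \setminus \mathcal{S}'$ is connected and dominates $V_i$, so $G_{V_i \setminus \mathcal{S}'}$ is connected; as there are no edges between $V_1$ and $V_2$ (distinct components of $G_{[n]\setminus\mathcal{S}}$) and $[n]\setminus\mathcal{S}' = (V_1\setminus\mathcal{S}') \sqcup (V_2\setminus\mathcal{S}')$, the graph $G_{[n]\setminus\mathcal{S}'}$ has exactly two components, whence $\mathcal{S}' \in {\min}_2(G)$; but then a vertex $x \in \mathcal{S}' \setminus \mathcal{S}$, say $x \in V_1$, cannot be a cut point of $G_{([n]\setminus\mathcal{S}') \cup \{x\}}$, since adding $x$ back leaves at least two components — contradicting Definition~\ref{minimal cuts}. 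Thus $K_1 = K_2$, and choosing any $\{p,q\} \in A_2$ (nonempty by~(ii)) with $p \in V_1$, $q \in V_2$, we get $p \in U_1 \subseteq K_1 = K_2 \supseteq U_2 \ni q$ and $p,q \notin \mathcal{S}'$, so $\{p,q\}$ is dependent in $M(\mathcal{S}')$, i.e.\ $\{p,q\} \in A \cap E_{\mathcal{S}'}$. Hence $A \in \mathcal{T}(\mathcal{S})$.

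The main obstacle I anticipate is the subcase $\mathcal{S} \subsetneq \mathcal{S}'$ in the converse: under the standing assumptions one actually has $A \cap E_{\mathcal{S}'} = \emptyset$, so there is no direct argument and one must instead show this subcase cannot occur — which requires combining the minimality of the cut $\mathcal{S}'$ (Definition~\ref{minimal cuts}) with the fact that condition~(i), through the domination property of $U_i$, forces $G_{V_i \setminus \mathcal{S}'}$ to be connected. A secondary point demanding care is to keep track, throughout the converse, of which vertices belong to $\mathcal{S}'$, so that the edges and paths invoked genuinely survive in the induced subgraph $G_{[n]\setminus\mathcal{S}'}$.
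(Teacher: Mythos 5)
Your proof is correct, and the direction ``$A \in \mathcal{T}(\mathcal{S})$ implies (i) and (ii)'' follows the paper's argument closely (derive (ii) from the requirement that $A$ meet $\mathcal{D}(M(\emptyset)) \setminus \mathcal{D}(M(\mathcal{S}))$, then show $\supp(A) \in T$ and apply Lemma~\ref{transversal for nice subsets}). For the converse implication the paper takes a different decomposition: it splits on whether $\mathcal{S}'$ is \emph{nice}, and in the nice case it invokes the ``if'' direction of Lemma~\ref{transversal for nice subsets} (through condition~(i)) to conclude $\supp(A)\in T$, hence $\supp(A)\cap\mathcal{S}'\neq\emptyset$; in the not-nice case it uses the witness pair $r,s$ together with the connectedness of $G_{V_i\setminus\mathcal{S}'}$ to place all of $(V_1\cup V_2)\setminus\mathcal{S}'$ in a single component of $G_{[n]\setminus\mathcal{S}'}$. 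You instead split on whether $\supp(A)$ meets $\mathcal{S}'$, and when it does not, on whether $\mathcal{S}\subseteq\mathcal{S}'$: your shared-neighbor path through a vertex $x\in\mathcal{S}\setminus\mathcal{S}'$ and your exclusion-by-minimality argument mirror, respectively, Case~2 and Case~1 of the proof of the lemma's ``if'' direction. In effect you inline the graph-theoretic content of Lemma~\ref{transversal for nice subsets} rather than reusing it. Both arguments are sound; the paper's is more modular since the lemma is needed for the other implication anyway, while yours is more self-contained and avoids any niceness case-split in this direction.
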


\begin{proof}
Assume first that $A$ satisfies conditions~(i) and~(ii). We will show that $A \in \mathcal{T}(\mathcal{S})$. To that end, we must verify that $A \cap \mathcal{D}(M(\mathcal{S}')) \neq \emptyset$ for every $\mathcal{S}' \in \min(G) \setminus \{ \mathcal{S} \}$. We proceed by cases.

\smallskip
\noindent\textbf{Case~1:} $\mathcal{S}'$ is nice. By condition~(i) and Lemma~\ref{transversal for nice subsets}, we know that $\supp(A) \in T$, and hence $\supp(A) \cap \mathcal{S}' \neq \emptyset$, %Since $A \cap \mathcal{D}(M(\mathcal{S})) = \emptyset$, it follows that $\supp(A) \subseteq [n] \setminus \mathcal{S}$, 
and thus there exists $r \in \supp(A) \cap \mathcal{S}'$. 

Therefore, some $e \in A$ satisfies $r \in e$. As $r\in \mathcal{S}'$, it follows that $\{r\}$ is a loop of $M(\mathcal{S}')$, so $e \in \mathcal{D}(M(\mathcal{S}'))$, establishing that $A \cap \mathcal{D}(M(\mathcal{S}')) \neq \emptyset$.

\smallskip
\noindent\textbf{Case~2:} $\mathcal{S}'$ is not nice. Then there exist $r \in V_{1} \setminus \mathcal{S}'$ and $s \in V_{2} \setminus \mathcal{S}'$ lying in the same connected component of $G_{[n] \setminus \mathcal{S}'}$. We distinguish two subcases:

\smallskip
\noindent\textbf{Case~2.1:} $\supp(A) \cap \mathcal{S}' \neq \emptyset$. Then, as in Case~1, we conclude that $A \cap \mathcal{D}(M(\mathcal{S}')) \neq \emptyset$.

\smallskip
\noindent\textbf{Case~2.2:} $\supp(A) \cap \mathcal{S}' = \emptyset$. Then $\supp(A) \subseteq [n] \setminus \mathcal{S}'$. By condition~(i), the induced subgraphs $G_{V_1 \setminus \mathcal{S}'}$ and $G_{V_2 \setminus \mathcal{S}'}$ are connected. Since $r$ and $s$ lie in the same connected component of $G_{[n] \setminus \mathcal{S}'}$, it follows that all vertices in $(V_1 \cup V_2) \setminus \mathcal{S}'$ are in a single connected component of $G_{[n] \setminus \mathcal{S}'}$. Consequently, every $2$-subset of $V_1 \cup V_2$ is dependent in $M(\mathcal{S}')$. By condition~(ii) it follows that there exist vertices $i \in V_1$ and $j \in V_2$ such that $\{i, j\} \in A$.
Consequently, since $\{i,j\}\in \mathcal{D}(M(\mathcal{S}'))$, it follows that $A \cap \mathcal{D}(M(\mathcal{S}')) \neq \emptyset$.

\smallskip
This proves that $A \in \mathcal{T}(\mathcal{S})$.

\smallskip

For the converse, suppose that $A \in \mathcal{T}(\mathcal{S})$. We will show that $A$ satisfies conditions~(i) and~(ii).

We first verify condition~(ii). Since $A \in \mathcal{T}(\mathcal{S})$ and $\emptyset \in \min(G)$, it follows that
\[
A \cap  \mathcal{D}(M(\emptyset))  \neq \emptyset.
\]
Note that $M(\emptyset)$ is the rank-one uniform matroid $U_{1,n}$, so
\[
\mathcal{D}(M(\emptyset)) = \{ B \subseteq [n] : |B| \geq 2 \}.
\]
On the other hand,
\[
\mathcal{D}(M(\mathcal{S})) = \{ B \subseteq [n] : B \cap \mathcal{S} \neq \emptyset \} \cup \{ B : |B \cap V_1| \geq 2 \text{ or } |B \cap V_2| \geq 2 \} \cup \{ B : |B| \geq 3 \}.
\]
Therefore,
\[
\mathcal{D}(M(\emptyset)) \setminus \mathcal{D}(M(\mathcal{S})) = \{ B \subseteq [n] : |B| = 2 \text{ and } |B \cap V_1| = |B \cap V_2| = 1 \}.
\]
It follows that $A$ contains a subset of the form $\{i, j\}$ with $i \in V_1$ and $j \in V_2$, establishing condition~(ii).

By condition~(ii), it follows that $\supp(A)\cap V_{i}\neq \emptyset$ for $i=1,2$. Then, by Lemma~\ref{transversal for nice subsets}, condition~(i) is equivalent to $\supp(A) \in T$. We argue by contradiction and assume that $\supp(A) \notin T$. Then there exists a nice subset $\mathcal{S}' \in \min(G)\setminus \{\mathcal{S}\}$ such that $\supp(A) \cap \mathcal{S}' = \emptyset$. Since $\mathcal{S}'$ is nice, every element in
\[
\mathcal{D}(M(\mathcal{S}')) \setminus \mathcal{D}(M(\mathcal{S}))
\]
is a subset of size one or two and contains at least one element of $\mathcal{S}'$. But as $\supp(A) \cap \mathcal{S}' = \emptyset$, it follows that $A$ cannot contain an element of this form, i.e., 
\[
A \cap \mathcal{D}(M(\mathcal{S}'))  = \emptyset.
\]
This contradicts the assumption that $A \in \mathcal{T}(\mathcal{S})$. Hence, $\supp(A) \in T$, and condition~(i) holds. This completes the proof.
\end{proof}

Proposition~\ref{characterization TS} motivates the introduction of a new graph-theoretic notion of dominance, adapted to the context of minimal cuts, which we now define. 

\begin{definition}\label{definition gACDij 21}
Define
\[
\mathcal{D}_{c}(V_{1}, V_{2}) = \left\{ A \subset V_{1} \cup V_{2} \mid A \cap V_{i} \in \mathcal{D}_{c}(G_{V_{i} \cup \mathcal{S}})\ \text{for each } i = 1, 2 \right\}.
\]
Moreover, we define the number
\[\gamma_{c}(V_{1},V_{2})=\min\{\size{A}:A\in \DVV\}.\]
Observe that $V_{1}\cup V_{2}\in \mathcal{D}_{c}(V_{1}, V_{2})$, which implies that $\gamma_{c}(V_{1},V_{2})$ is well-defined.
\end{definition}

From the description of $\mathcal{T}(\mathcal{S})$ given in Proposition~\ref{characterization TS}, one obtains an explicit finite generating set for the ideal $\JS$, as specified in Definition~\ref{notation JS}. Our aim, however, is to provide a smaller generating set. These generators are introduced in the following definition.

\begin{definition}\label{definition gACDij}
%We introduce the following notation and terminology:
%\begin{itemize}
%\item Define
%\[
%\mathcal{D}_{c}(V_{1}, V_{2}) = \left\{ A \subset V_{1} \cup V_{2} \mid A \cap V_{i} \in \mathcal{D}_{c}(G_{V_{i} \cup \mathcal{S}})\ \text{for each } i = 1, 2 \right\}.
%\]
%Moreover, we define the number
%\[\gamma_{c}(V_{1},V_{2})=\min\{\size{A}:A\in \DVV\}.\]
%\item 
For any $A \in \mathcal{D}_{c}(V_{1}, V_{2})$, elements $i \in A \cap V_{1}$ and $j \in A \cap V_{2}$, and a partition $\{C, D\}$ of $A \setminus \{i, j\}$, we define
\[
g_{A,C,D}^{i,j} := g_{C,D} \cdot f_{i,j}=g_{C,D} \cdot (x_{i}y_{j}-x_{j}y_{i}).
\]
%\end{itemize}
\end{definition}

We are now in a position to present a concise generating set for the ideal $\JS$.

\begin{lemma}\label{concise generating set}
The ideal $\JS$ admits the following generating set:
\begin{equation}\label{equality JS}
    \JS = \left( g_{A,C,D}^{i,j} \ : \ A \in \mathcal{D}_{c}(V_1, V_2),\ i \in A \cap V_1,\ j \in A \cap V_2,\ \text{and}\ C \amalg D = A \setminus \{i,j\} \right).
\end{equation}
\end{lemma}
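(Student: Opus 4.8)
The plan is to show that both sides of~\eqref{equality JS} coincide by a double inclusion, using Proposition~\ref{characterization TS} to relate the generators $g_{A,C,D}$ of $\JS$ (indexed by transversals $A \in \mathcal{T}(\mathcal{S})$) to the more economical generators $g_{A,C,D}^{i,j}$. Write $J'$ for the ideal on the right-hand side of~\eqref{equality JS}.

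For the inclusion $J' \subseteq \JS$: given $A \in \mathcal{D}_c(V_1,V_2)$, elements $i \in A\cap V_1$, $j\in A\cap V_2$, and a partition $A\setminus\{i,j\} = C \amalg D$, I would exhibit a transversal $\widetilde{A} \in \mathcal{T}(\mathcal{S})$ whose associated monomial-times-binomial generator equals $g_{A,C,D}^{i,j}$ up to the obvious identification. Concretely, set $\widetilde{A}_2 = \{\{i,j\}\}$ and $\widetilde{A}_1 = A\setminus\{i,j\}$ (identified with singletons), so that $\supp(\widetilde{A}) = A$. Since $A\cap V_\ell \in \mathcal{D}_c(G_{V_\ell\cup\mathcal{S}})$ for $\ell=1,2$, condition~(i) of Proposition~\ref{characterization TS} holds, and $\widetilde{A}_2 = \{\{i,j\}\}\neq\emptyset$ gives condition~(ii); one must also check $\widetilde{A}$ is disjoint from $\mathcal{D}(M(\mathcal{S}))$, i.e.\ no singleton of $\widetilde A_1$ meets $\mathcal S$ and $\{i,j\}\notin\mathcal D(M(\mathcal S))$ — the former because $A\subseteq V_1\cup V_2$, the latter because $i\in V_1$, $j\in V_2$ lie in different components of $G_{[n]\setminus\mathcal S}$. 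Hence $\widetilde{A}\in\mathcal{T}(\mathcal{S})$, and with the partition $C\amalg D$ of $\widetilde{A}_1$ we get $g_{\widetilde A,C,D} = g_{C,D}\cdot f_{i,j} = g_{A,C,D}^{i,j} \in \JS$.

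For the reverse inclusion $\JS \subseteq J'$: take an arbitrary generator $g_{A,C,D}$ with $A\in\mathcal{T}(\mathcal{S})$ and $C\amalg D = A_1$. By Proposition~\ref{characterization TS}, $A_2\neq\emptyset$, so pick $\{i,j\}\in A_2$; since $A$ avoids $\mathcal{D}(M(\mathcal{S}))$, the pair $\{i,j\}$ is independent in $M(\mathcal{S})$, which (given that $G_{V_1}$, $G_{V_2}$ are the two components of $G_{[n]\setminus\mathcal{S}}$ and $A_2$ avoids $\mathcal D(M(\mathcal S))$) forces $i$ and $j$ to lie in different components, say $i\in V_1$, $j\in V_2$ after relabeling. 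Now let $B := \supp(A)$; by Proposition~\ref{characterization TS}(i), $B\cap V_\ell\in\mathcal{D}_c(G_{V_\ell\cup\mathcal{S}})$ for $\ell=1,2$, so $B\in\mathcal{D}_c(V_1,V_2)$, and $B\subseteq V_1\cup V_2$ since $A$ avoids $\mathcal D(M(\mathcal S))$ hence no element of $A$ meets $\mathcal S$. Observe $g_{A,C,D} = g_{C,D}\cdot\prod_{\{k,l\}\in A_2} f_{k,l}$; I would write this as $g_{A,C,D} = \big(g_{C',D'}\cdot f_{i,j}\big)\cdot m$, where $C' = C\cup\{\text{one endpoint of each pair in }A_2\setminus\{\{i,j\}\}\}$, $D'=D\cup\{\text{the other endpoint of each such pair}\}$ gives a partition of $B\setminus\{i,j\}$, and $m$ is the product over $\{k,l\}\in A_2\setminus\{\{i,j\}\}$ of the remaining linear factor $(x_k y_l - x_l y_k)$ reorganized appropriately — more carefully, expand each $f_{k,l}$ and note every resulting monomial multiple of $g_{C',D'}f_{i,j}$ lies in $J'$ since $g_{B,C',D'}^{i,j}\in J'$. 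Thus $g_{A,C,D}$ is a polynomial combination of the $g^{i,j}_{B,\ast,\ast}$ and lies in $J'$.

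The main obstacle is the bookkeeping in the reverse inclusion: $g_{A,C,D}$ carries a product of several binomials $f_{k,l}$ (one for each element of $A_2$), whereas the target generators $g^{i,j}_{B,C,D}$ carry exactly one binomial. The point is that each extra binomial $f_{k,l}$ can be absorbed: writing $f_{k,l} = x_k y_l - x_l y_k$, multiplying $g_{B,C',D'}^{i,j}$ (for suitable partitions in which $k\in C'$ or $k\in D'$) by the appropriate variable realizes each monomial of $f_{k,l}\cdot(\text{rest})$, so that $g_{A,C,D}$ expands as an $S$-linear combination of generators of $J'$ of the same type with $B=\supp(A)$. I would verify this absorption by induction on $|A_2|$, the base case $|A_2|=1$ being immediate. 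Care is needed to ensure that at each step the set $B\cap V_\ell$ still lies in $\mathcal D_c(G_{V_\ell\cup\mathcal S})$ — but $B=\supp(A)$ is fixed throughout, so this is guaranteed once and for all by Proposition~\ref{characterization TS}(i), and only the partitions of $B\setminus\{i,j\}$ vary.
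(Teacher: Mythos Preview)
Your proposal is correct and follows essentially the same route as the paper. Both directions are handled the same way: for $J'\subseteq\JS$ you build the transversal $\widetilde A$ with $\widetilde A_2=\{\{i,j\}\}$ exactly as the paper does, and for $\JS\subseteq J'$ you argue by induction on $|A_2|$, absorbing each extra binomial $f_{k,l}=x_ky_l-x_ly_k$ into the monomial part while keeping $B=\supp(A)\in\mathcal D_c(V_1,V_2)$ fixed. The paper carries out this induction more explicitly, splitting the removal of a pair $\{r,s\}$ into three cases according to $|\{r,s\}\cap\supp(B\setminus\{\{r,s\}\})|$; your direct-expansion remark (each monomial of the expanded product is divisible by some $g_{B,C',D'}^{i,j}$) is in fact a slightly more streamlined way to reach the same conclusion.
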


\begin{proof}
Let $J$ denote the ideal appearing on the right-hand side of Equation~\eqref{equality JS}. We begin by proving the inclusion $J \subset \JS$. To this end, let $g_{A,C,D}^{i,j}$ be an arbitrary generator of $J$, and we will show that it lies in $\JS$.

Consider the collection of subsets
\[
A' := \{\{x\} : x \in A \setminus \{i,j\}\} \cup \{\{i,j\}\}.
\]
Note that $\supp(A') = A \in \DVV$, and $A'$ contains the pair $\{i,j\}$, with $i \in V_1$ and $j \in V_2$. Therefore, by Proposition~\ref{characterization TS}, it follows that $A' \in \TS$.

Now, using the notation introduced in Definition~\ref{important notation} (iii), we have
\[
A'_1 = \{\{x\} : x \in A \setminus \{i,j\}\} \quad \text{and} \quad A'_2 = \{\{i,j\}\},
\]
and then
\[
g_{A,C,D}^{i,j} = g_{A',C,D}.
\]
Since $g_{A',C,D} \in \JS$ by Definition~\ref{notation JS}, we conclude that $g_{A,C,D}^{i,j} \in \JS$, as required.

%We now turn to the proof of the reverse inclusion. Let $g_{B,C,D}$ be an arbitrary generator of $\JS$. By Definition~\ref{notation JS}, this generator corresponds to a subset $B \in \TS$ and a partition $C \amalg D = B_1$, and our goal is to show that $g_{B,C,D} \in J$. We proceed by induction on the quantity $|B_2|$, the number of subsets of size two in $B$. 

We now turn to the proof of the reverse inclusion. Let $g_{B,C,D}$ be an arbitrary generator of $\JS$. By Definition~\ref{notation JS}, this generator corresponds to a subset $B \in \TS$ and a partition $C \amalg D = B_1$, and our goal is to show that $g_{B,C,D} \in J$. We proceed by induction on the quantity $|B_2|$, the number of subsets of size two in $B$.

\smallskip
\noindent\textbf{Base case: $\boldsymbol{|B_2| = 1}$.} Suppose $B_2 = \{\{r,s\}\}$. Then, by definition,
\[
g_{B,C,D} = g_{C,D} \cdot f_{r,s}.
\]
Since $B \in \TS$, Proposition~\ref{characterization TS} implies that
\[
C \cup D \cup \{r,s\} \in \DVV.
\]
Consider the following generator of $J$:
\begin{equation}\label{long polynomial}
g^{r,s}_{C \cup D \cup \{r,s\},\, C \setminus \{r,s\},\, D \setminus \{r,s\}} = g_{C \setminus \{r,s\},\, D \setminus \{r,s\}} \cdot f_{r,s}.
\end{equation}
By construction, the polynomial in~\eqref{long polynomial} belongs to $J$, and clearly divides $g_{B,C,D}$. It follows that $g_{B,C,D} \in J$, as claimed.

\smallskip

\noindent \textbf{Inductive step.} Let $B \in \mathcal{T}(\mathcal{S})$ with $|B_2| \geq 2$, and assume the statement holds for all $U \in \mathcal{T}(\mathcal{S})$ such that $|U_2| < |B_2|$. Let $\{r, s\}$ be an element of $B_2$ and consider the collection of subsets
\[
B' := B \setminus \{\{r,s\}\}.
\]
We look at the possible cases for the quantity $\size{\{r,s\}\cap \supp(B')}$.

\smallskip 

{\textbf{Case 1:} $\{r,s\} \subset \supp(B')$.} In this case, note that $\supp(B') = \supp(B)$ and that $B'_2 \neq \emptyset$. By Proposition~\ref{characterization TS}, it follows that $B' \in \TS$. Moreover, since $\size{B'_2} = \size{B_2} - 1$, the inductive hypothesis ensures that
\[
g_{B', C, D} \in J
\]
Since $g_{B,C,D}=g_{B', C, D}\cdot f_{r,s}$, it follows that $g_{B,C,D}\in J$. 

\smallskip

{\textbf{Case 2:} $\{r,s\} \cap \supp(B') = \emptyset$.} 
In this case, consider the collection of subsets
\[
B'' := (B \cup \{\{r\}, \{s\}\}) \setminus \{\{r,s\}\}.
\]
Note that $\supp(B'') = \supp(B)$ and that $B''_2 \neq \emptyset$. By Proposition~\ref{characterization TS}, it follows that $B'' \in \TS$. Moreover, since $\size{B''_2} = \size{B_2} - 1$, the inductive hypothesis ensures that
\[
g_{B'', C', D'} \in J
\]
for any partition $\{C', D'\}$ of $B_1 \cup \{r,s\}$. In particular, this applies to the polynomials
\[
g_{B'', C \cup \{r\}, D \cup \{s\}} \quad \text{and} \quad 
g_{B'', C \cup \{s\}, D \cup \{r\}},
\]
which therefore lie in $J$. By the definition of $B''$, we also have the identities
\[
g_{B'', C \cup \{r\}, D \cup \{s\}} = \frac{g_{B,C,D} \, x_r y_s}{f_{r,s}} 
\quad \text{and} \quad 
g_{B'', C \cup \{s\}, D \cup \{r\}} = \frac{g_{B,C,D} \, x_s y_r}{f_{r,s}}.
\]
Subtracting these two expressions yields
\[
g_{B,C,D} = \frac{g_{B,C,D} \, x_r y_s}{f_{r,s}} - \frac{g_{B,C,D} \, x_s y_r}{f_{r,s}} \in J,
\]
as desired.

\smallskip

{\bf Case~3: $\lvert\{r,s\}\cap \supp(B')\rvert=1$.} 
Assume, without loss of generality, that $r\in \supp(B')$ and $s\notin \supp(B')$. In this case, consider the collection of subsets
\[B'':=(B\cup \{\{s\}\})\setminus \{\{r,s\}\}.\]
Note that $\supp(B'')=\supp(B)$ and that $B''_{2}\neq \emptyset$. By Proposition~\ref{characterization TS}, it follows that $B''\in \TS$. Moreover, since $\size{B''_{2}}=\size{B_{2}}-1$, the inductive hypothesis ensures that
\[g_{B'',C',D'}\in J\]
for any partition $\{C', D'\}$ of $B_1 \cup \{s\}$. In particular, this applies to the polynomials
\[
g_{B'', C \cup \{s\}, D} \quad \text{and} \quad 
g_{B'', C, D \cup \{s\}},
\]
which therefore lie in $J$. By the definition of $B''$, we also have the identities
\[
g_{B'', C \cup \{s\}, D} = \frac{g_{B,C,D} \, x_s}{f_{r,s}} 
\quad \text{and} \quad 
g_{B'', C , D \cup \{s\}} = \frac{g_{B,C,D} \, y_s}{f_{r,s}}.
\]
Consequently, we have
\[
g_{B,C,D} = \frac{g_{B,C,D} \, y_s}{f_{r,s}}\cdot x_{r} - \frac{g_{B,C,D} \, x_s}{f_{r,s}}\cdot y_{r} \in J,
\]
as desired. This completes the proof.
\end{proof}

\begin{definition}\label{def concise gen set}
For ease of notation, we denote by $\mathcal{G}_{\mathcal{S}}$ the generating set of the ideal $\JS$ described in Lemma~\ref{concise generating set}.
\end{definition}

Having established a concise generating set for the ideal $\mathcal{J}_{\mathcal{S}}$, the next natural step, following the approach outlined in Subsection~\ref{reduction with transversals}, is to determine the radical of $J_G + \JS$. To this end, we will compute a Gröbner basis for the ideal. Before proceeding, we recall some relevant definitions and results from~\cite{herzog2010binomial}.

\begin{definition}[\cite{herzog2010binomial}]\label{lexicographical order}
We adopt the following terminology:

\begin{itemize}
    \item Let $<$ denote the lexicographic order on the polynomial ring $\CC[x_1,\ldots,x_n,y_1,\ldots,y_n]$ induced by
    \[
    x_1 > x_2 > \cdots > x_n > y_1 > y_2 > \cdots > y_n.
    \]

    \item Let $G$ be a graph on the vertex set $[n]$, and let $i,j$ be vertices of $G$ with $i < j$. A path 
    \[
    \pi: i = i_0, i_1, \ldots, i_r = j
    \]
    from $i$ to $j$ is called \emph{admissible} if the following conditions hold:
    \begin{itemize}
        \item[{\rm (i)}] The vertices $i_0, \ldots, i_r$ are pairwise distinct.
        \item[{\rm (ii)}] For each $1 \leq k \leq r-1$, either $i_k < i$ or $i_k > j$.
        \item[{\rm (iii)}] For any proper subset $\{j_1, \ldots, j_s\}$ of $\{i_1, \ldots, i_{r-1}\}$, the sequence $i, j_1, \ldots, j_s, j$ is not a path in $G$.
    \end{itemize}
    To an admissible path $\pi$, we associate the monomial
    \[
    u_\pi = \prod_{i_k > j} x_{i_k} \cdot \prod_{i_l < i} y_{i_l}.
    \]

    \item Let $i$ and $j$ be vertices of a graph $G$, and let $B \subseteq V(G)$. We say that $i$ and $j$ are \emph{connected through} $B$ if there exists a path 
    \[
    i, j_1, \ldots, j_s, j
    \]
    from $i$ to $j$ such that $\{j_1, \ldots, j_s\} \subseteq B$.
\end{itemize}
\end{definition}

The following proposition is essential for the computation of a Gröbner basis for the ideal $J_G + \JS$.

\begin{proposition}\label{grobner basis for JG}
Let $G$ be a graph on $[n]$, and let $<$ denote the monomial order defined in Definition~\ref{lexicographical order}. Then the following hold:
\begin{enumerate}
\item[{\rm (i)}] The set of binomials
\[
 \bigcup_{i<j} \left\{ u_{\pi} f_{i,j} \,\middle|\, \pi \text{ is an admissible path from } i \text{ to } j \right\}
\]
forms a reduced Gröbner basis of the ideal $J_G$.
\item[{\rm (ii)}] If $i$ and $j$ are vertices connected through a subset $B \subseteq V(G)$, and $\{C, D\}$ is a partition of $B$, then the polynomial
\[
g_{C,D} \cdot f_{i,j}
\]
belongs to $J_G$.
\end{enumerate}
\end{proposition}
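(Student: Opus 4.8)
The plan is to prove part~(i) essentially by citation to \cite{herzog2010binomial}, where this reduced Gröbner basis for $J_G$ is established, and then to derive part~(ii) as a consequence of part~(i) by an induction on $|B|$. Part~(i) is \cite[Theorem~2.1 and Corollary~2.2]{herzog2010binomial} (the statement that the given collection of binomials $u_\pi f_{i,j}$ is a reduced Gröbner basis of $J_G$ with respect to the lexicographic order of Definition~\ref{lexicographical order}); so the only thing that needs a genuine argument is~(ii).

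For part~(ii), I would argue as follows. Suppose $i$ and $j$ are connected through $B$, so there is a path $i, j_1, \ldots, j_s, j$ with $\{j_1,\ldots,j_s\} \subseteq B$. First reduce to the case where the path is \emph{minimal}, i.e. no proper subpath from $i$ to $j$ exists using a subset of the $j_\ell$: if some $j_\ell$ can be removed, do so; this only shrinks the set of intermediate vertices, and since $g_{C,D}$ for the full $B$ is divisible by $g_{C',D'}$ for the restriction to the smaller vertex set (with the induced partition), it suffices to prove the claim for the smaller set and then multiply by the remaining $x_k$'s and $y_k$'s. So assume the path $i, j_1, \ldots, j_s, j$ is minimal. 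Now I claim that after relabeling we may arrange the path to be admissible in the sense of Definition~\ref{lexicographical order}: the distinctness condition~(i) holds by minimality, and condition~(iii) is exactly minimality; condition~(ii) (each intermediate vertex lies outside $[i,j]$) need not hold literally, but one can still extract from the chosen path an admissible path $\pi$ from $\min(i,j)$ to $\max(i,j)$ whose intermediate vertices form a subset of $\{j_1,\ldots,j_s\}$ — this is the content of the combinatorial lemma behind \cite[Theorem~2.1]{herzog2010binomial}. Then $u_\pi f_{i,j} \in J_G$ by part~(i), and $u_\pi = \prod_{i_k > j} x_{i_k}\prod_{i_\ell < i} y_{i_\ell}$ divides $g_{C,D}$ once we choose the partition $\{C,D\}$ of $B$ compatibly (the vertices of $\pi$ exceeding $\max(i,j)$ going into the $x$-part, those below $\min(i,j)$ into the $y$-part); hence $g_{C,D} f_{i,j}$ is a multiple of $u_\pi f_{i,j}$ and lies in $J_G$.

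Actually a cleaner route to~(ii), avoiding the admissibility bookkeeping, is a direct induction on $s = |B|$ (or on the path length) using only the identity among $2\times 2$ minors of the matrix $X$ from Equation~\eqref{matrix X}: for three columns $a, b, c$ one has the Plücker-type relation
\[
x_a\, f_{b,c} - x_b\, f_{a,c} + x_c\, f_{a,b} = 0,
\]
and similarly with $y$'s in place of $x$'s. Given the path $i, j_1, \ldots, j_s, j$, the edge $\{i, j_1\} \in E(G)$ gives $f_{i,j_1} \in J_G$, and by induction (the path $j_1, j_2, \ldots, j_s, j$ connects $j_1$ and $j$ through $B \setminus \{j_1\}$) we get $g_{C', D'} f_{j_1, j} \in J_G$ for the appropriate induced partition. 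Multiplying the first by a suitable monomial in $x_{j_1}$ or $y_{j_1}$ and the second by the missing variables, and then using the three-term relation above with columns $i, j_1, j$ to combine $f_{i,j_1}$ and $f_{j_1,j}$ into $f_{i,j}$ (times $x_{j_1}$ or $y_{j_1}$, which is then absorbed into $g_{C,D}$ according to which part $j_1$ was assigned), yields $g_{C,D} f_{i,j} \in J_G$. One bookkeeping point to handle carefully: the partition $\{C,D\}$ of $B$ is arbitrary, so at each step one must track $j_1$ into whichever of $C$ or $D$ it belongs and use the $x$- or $y$-version of the Plücker relation accordingly; this is routine but is the one place where sign/variable-type care is needed.

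The main obstacle is thus not conceptual but organizational: reconciling the \emph{arbitrary} partition $\{C,D\}$ of $B$ appearing in the statement with the \emph{specific} partition (by whether a vertex exceeds $j$ or is below $i$) dictated by the monomial $u_\pi$ in the Gröbner basis of part~(i). The induction in the second approach sidesteps this by never invoking admissible paths at all, at the cost of one explicit three-term minor identity per step; I would write up that version, reducing first to a minimal path, then inducting on its length with the Plücker relation, and citing part~(i) only for the base fact that $f_{i,j} \in J_G$ for an edge (which is immediate from the definition of $J_G$) — so in fact part~(ii) can be made logically independent of part~(i) entirely.
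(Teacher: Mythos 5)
Your preferred write-up (the Pl\"ucker-relation induction) is correct, but it is a genuinely different argument from the one the paper gives for part~(ii). The paper's proof is a Nullstellensatz-style argument: it shows that $g_{C,D}\,f_{i,j}$ vanishes at every point $\gamma\in\mathbb{V}(J_G)$ by a two-case analysis on whether some intermediate vector $\gamma_{j_k}$ is zero (in which case a variable $x_{j_k}$ or $y_{j_k}$ dividing $g_{C,D}$ kills the product) or all are nonzero (in which case dependence propagates along the path and $f_{i,j}(\gamma)=0$); it then concludes $g_{C,D}\,f_{i,j}\in\mathbb{I}(\mathbb{V}(J_G))=\sqrt{J_G}=J_G$, invoking the radicality of binomial edge ideals from \cite[Corollary~2.2]{herzog2010binomial}. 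Your approach instead runs an induction on the path length, absorbing one intermediate vertex $j_1$ at a time via the identity $x_{j_1} f_{i,j} = x_i f_{j_1,j} + x_j f_{i,j_1}$ (or its $y$-analogue, depending on whether $j_1\in C$ or $j_1\in D$), together with the trivial reduction from $B$ to the actual path vertices by multiplying by the leftover variables. The paper's route is shorter but relies on radicality and on working over $\mathbb{C}$ (the paper fixes $\mathbb{K}=\mathbb{C}$ throughout, so the Nullstellensatz step is available); your route is more elementary, needs no hypothesis on the field, and makes part~(ii) logically independent of part~(i). You were also right to discard your first sketch (extracting an admissible path and trying to match $u_\pi$ against $g_{C,D}$): as you noticed, the partition $\{C,D\}$ in the statement is arbitrary whereas $u_\pi$ forces a specific assignment of each intermediate vertex to the $x$- or $y$-side, so that argument as stated only handles one particular partition rather than all of them.
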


\begin{proof}
(i) The proof of this result is given in \textup{\cite[Theorem~2.1]{herzog2010binomial}}.

\smallskip

(ii) We begin by showing that $g_{C,D} f_{i,j}$ vanishes at every point $\gamma \in \mathbb{V}(J_G)$. Let 
\[
\gamma = \{\gamma_1, \ldots, \gamma_n\} \in \mathbb{V}(J_G)
\]
be an arbitrary element of the variety, where each $\gamma_k$ is a vector in $\CC^2$ indexed by $[n]$.

Since $i$ and $j$ are connected through $B$, there exists a path $i, j_1, \ldots, j_s, j$ in $G$ such that $\{j_1, \ldots, j_s\} \subset B$. As $\gamma \in \mathbb{V}(J_G)$, and since each consecutive pair in this path corresponds to an edge of $G$, it follows that each of the following pairs of vectors
\begin{equation}\label{ld}
\{\gamma_i, \gamma_{j_1}\}, \{\gamma_{j_1}, \gamma_{j_2}\}, \ldots, \{\gamma_{j_{s-1}}, \gamma_{j_s}\}, \{\gamma_{j_s}, \gamma_j\}
\end{equation}
is linearly dependent. We now consider two cases:

\smallskip
\noindent
\textbf{Case 1.} Suppose there exists $k \in [s]$ such that $\gamma_{j_k} = 0$. Since $j_k \in B$ and, by construction, $g_{C,D}$ is divisible by either $x_{j_k}$ or $y_{j_k}$, it follows that $g_{C,D}(\gamma) = 0$.

\smallskip
\noindent
\textbf{Case 2.} Suppose $\gamma_{j_k} \neq 0$ for all $k \in [s]$. Then, by Equation~\eqref{ld}, the vectors $\gamma_{j_1}, \ldots, \gamma_{j_s}$ must all be scalar multiples of one another. Moreover, since both $\{\gamma_i, \gamma_{j_1}\}$ and $\{\gamma_{j_s}, \gamma_j\}$ are linearly dependent, it follows that $\gamma_i$ and $\gamma_j$ are linearly dependent. Hence, $f_{i,j}(\gamma) = 0$.

\smallskip
In either case, we conclude that $g_{C,D} f_{i,j}(\gamma) = 0$ for all $\gamma \in \mathbb{V}(J_G)$. Therefore,
\[
g_{C,D} f_{i,j} \in \mathbb{I}(\mathbb{V}(J_G)) = \sqrt{J_G}.
\]
Finally, since $J_G$ is radical by \textup{\cite[Corollary~2.2]{herzog2010binomial}}, we conclude that $g_{C,D} f_{i,j} \in J_G$. \qedhere
\end{proof}

We are now in a position to describe a Gröbner basis for the ideal $J_G + \JS$. Before doing so, we introduce a simplifying assumption.

\begin{assumption}\label{assumption 2}
In the proposition below, we assume that %elements of $V_1$ precede those outside $V_1$, and that elements not in $V_2$ precede those in $V_2$. That is, 
if $i \in V_1$ and $i' \notin V_1$, then $i < i'$, and if $j \in V_2$ and $j' \notin V_2$, then $j' < j$.
\end{assumption}

This assumption is made without loss of generality, as such an ordering can always be achieved by relabeling the vertex set.

\begin{proposition}\label{proposition grobner basis JG+JS}
With the notation of Definitions~\ref{def concise gen set} and~\ref{lexicographical order}, the set
\[
\mathcal{G} = \bigcup_{i<j} \left\{ u_{\pi} f_{i,j} \,\middle|\, \pi \text{ is an admissible path from } i \text{ to } j \right\} \cup \GS
\]
forms a Gröbner basis for the ideal $J_G + \JS$ with respect to the lexicographical order.
\end{proposition}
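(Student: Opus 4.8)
To show that $\mathcal{G}$ is a Gröbner basis of $J_G + \JS$ with respect to the lexicographic order of Definition~\ref{lexicographical order}, I would use Buchberger's criterion: verify that all S-polynomials of pairs of elements of $\mathcal{G}$ reduce to zero modulo $\mathcal{G}$. First I would record that $\mathcal{G}$ does generate $J_G+\JS$: the first part comes from Proposition~\ref{grobner basis for JG} (i), and $\GS$ generates $\JS$ by Lemma~\ref{concise generating set}. The pairs to check split into three types: (a) two elements $u_\pi f_{i,j}$, $u_{\pi'} f_{i',j'}$ from the binomial part; (b) one element from the binomial part and one generator $g^{i,j}_{A,C,D}$ from $\GS$; (c) two generators $g^{i,j}_{A,C,D}$, $g^{i',j'}_{A',C',D'}$ from $\GS$. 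Type (a) is already handled since $\bigcup_{i<j}\{u_\pi f_{i,j}\}$ is a (reduced) Gröbner basis of $J_G$ by Proposition~\ref{grobner basis for JG} (i), so those S-polynomials reduce to zero modulo that subset, a fortiori modulo $\mathcal{G}$.

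For the new pairs, the key preliminary step is to compute the leading term of each $g^{i,j}_{A,C,D}=g_{C,D}\cdot f_{i,j}$. Under Assumption~\ref{assumption 2}, the vertices of $V_1$ are the smallest and those of $V_2$ are the largest, and $i\in V_1$, $j\in V_2$, so $i<j$ and in fact $i$ is below all other vertices touched while $j$ is above them; this forces $\operatorname{in}_<(f_{i,j}) = x_i y_j$, and hence $\operatorname{in}_<(g^{i,j}_{A,C,D}) = x_i y_j \prod_{k\in C} x_k \prod_{k\in D} y_k$, a squarefree monomial in which no variable is repeated (since $C\amalg D = A\setminus\{i,j\}$ with $i\notin C\cup D$, $j\notin C\cup D$). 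I would then exploit that these leading monomials are squarefree and, crucially, that the variable $x_i$ (with $i$ minimal, in $V_1$) and $y_j$ (with $j$ maximal, in $V_2$) appear in a controlled way. For a type-(c) pair, the S-polynomial is built from the lcm of two such squarefree monomials; the standard trick is to show it lies in $J_G$ (using Proposition~\ref{grobner basis for JG} (ii): if the two pairs $\{i,j\}$ and $\{i',j'\}$ both connect $V_1$ to $V_2$, then $i$ and $j'$ — or an appropriate pair — are connected through the union of the supports, so $g_{E,F}f_{p,q}\in J_G$ for a suitable partition), and then conclude it reduces to zero modulo the binomial part of $\mathcal{G}$. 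Type (b) is similar but typically easier: either the leading terms are coprime (in which case the S-polynomial reduces to zero automatically), or the overlap again produces, after cancelling $f_{i,j}$-type factors, a polynomial that is visibly a multiple of some $u_\pi f_{i,j}$ or lies in $J_G$ by the connectedness criterion of Proposition~\ref{grobner basis for JG} (ii).

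**Where the work concentrates.** The routine points are: type (a) is free; type (b) with coprime leading monomials is free; identifying the leading monomials is a direct consequence of Assumption~\ref{assumption 2}. The genuine obstacle is the type-(c) computation — two generators $g_{C,D}f_{i,j}$ and $g_{C',D'}f_{i',j'}$ whose leading monomials share variables. One must carefully organize the S-polynomial, which is a $\mathbb{Z}$-linear combination of monomials times $f_{i,j}$ and $f_{i',j'}$, and exhibit a rewriting — much in the spirit of the three-case inductive argument in the proof of Lemma~\ref{concise generating set} (with the subcases $\{i',j'\}\subseteq\operatorname{supp}$, disjoint, or meeting in one point) — that expresses it as a combination of elements of $\GS$ with strictly smaller leading terms plus an element of $J_G$. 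Here the membership statement Proposition~\ref{grobner basis for JG} (ii) does the heavy lifting: whenever two of the involved vertices become connected through the combined support $C\cup D\cup C'\cup D'$, the corresponding $g_{\bullet,\bullet}f_{\bullet,\bullet}$ lies in $J_G$, and $J_G$'s own Gröbner basis (the $u_\pi f_{i,j}$) then absorbs it. I would also need to check that whenever $A, A'\in\mathcal{D}_c(V_1,V_2)$ the relevant merged set still lies in $\mathcal{D}_c(V_1,V_2)$, so that the smaller-leading-term generators produced are genuinely in $\GS$; this is immediate from Definition~\ref{definition gACDij 21} since $\mathcal{D}_c(G_{V_i\cup\mathcal{S}})$ is closed under taking supersets within $V_i$. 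Assembling these reductions for every overlap pattern, Buchberger's criterion is satisfied and $\mathcal{G}$ is a Gröbner basis.
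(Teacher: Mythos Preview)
Your plan is correct and follows essentially the same route as the paper: Buchberger's criterion, the three-way split into pairs from $\mathcal{G}'$, pairs from $\GS$, and mixed pairs, with the regular-sequence/coprime argument and Proposition~\ref{grobner basis for JG}~(ii) doing the work in the overlap subcases. One small recalibration: you locate the ``genuine obstacle'' in the type~(c) pairs (two elements of $\GS$), but in the paper that case is comparatively clean---either $S(f,h)=0$, or the binomial factors have coprime initials, or $S(f,h)\in J_G$ via the connected-dominating property of $A\cap V_2$; it is the \emph{mixed} pairs (your type~(b)) that carry the most subcases, including one (when $l\in V_1$) where the $S$-polynomial is not in $J_G$ but is instead divisible by another element of $\GS$, which is exactly where your superset-closure remark about $\mathcal{D}_c(V_1,V_2)$ is needed.
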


\begin{proof}
Set 
\[
\mathcal{G}' = \bigcup_{i < j} \left\{ u_{\pi} f_{i,j} \,\middle|\, \pi \text{ is an admissible path from } i \text{ to } j \right\}.
\]
By Lemma~\ref{concise generating set} and Proposition~\ref{grobner basis for JG}~(i), the union $\mathcal{G}' \cup \GS$ generates the ideal $J_G + \JS$. To show that $\mathcal{G}$ is a Gröbner basis, we use Buchberger’s criterion and verify that every $S$-pair $S(f,h)$ with $f,h \in \mathcal{G}$ reduces to zero. We proceed by analyzing the different possible cases for the pair $(f,h)$.

\smallskip
\noindent\textbf{Case~1:} $f,h \in \mathcal{G}'$.  
By Proposition~\ref{grobner basis for JG}~(i), the set $\mathcal{G}'$ is a Gröbner basis for $J_G$. It follows that $S(f,h)$ reduces to zero.

\smallskip
\noindent\textbf{Case~2:} $f,h \in \GS$.  
Write $f = g_{A,C,D}^{i,j}$ and $h = g_{A',C',D'}^{k,l}$, where $\{i,k\} \subset V_1$ and $\{j,l\} \subset V_2$. We distinguish cases based on the indices.

\smallskip
\noindent\textbf{Case~2.1:} $i = k$ and $j = l$.  
In this situation, one has $S(f,h) = 0$.

\smallskip
\noindent\textbf{Case~2.2:} $i \neq k$ and $j \neq l$.  
The initial monomials $\textup{in}_{<}(f_{i,j})$ and $\textup{in}_{<}(f_{k,l})$ form a regular sequence. As a result, the $S$-pair $S(f,h)$ reduces to zero, because of the following more general fact: if $f_1, f_2$ are polynomials whose initial monomials form a regular sequence, then for any monomials $u,v$, the $S$-pair $S(uf_1, vf_2)$ reduces to zero, see \textup{\cite[Problem~2.17]{ene2011grobner}}.

\smallskip
\noindent\textbf{Case~2.3:} $\lvert \{i,j\} \cap \{k,l\} \rvert = 1$.  
In this case, either $i = k$ and $j \neq l$, or $j = l$ and $i \neq k$. Without loss of generality, assume $i = k$ and $j \neq l$, as the other case is analogous. Then:
\begin{align*}
S(f,h) 
&= \frac{\mathrm{lcm}(g_{C,D}x_i y_j,\, g_{C',D'}x_i y_l)}{x_i y_j}(x_i y_j - x_j y_i) 
  - \frac{\mathrm{lcm}(g_{C,D}x_i y_j,\, g_{C',D'}x_i y_l)}{x_i y_l}(x_i y_l - x_l y_i) \\
&= \mathrm{lcm}(g_{C,D}x_i y_j,\, g_{C',D'}x_i y_l) \left( \frac{x_l y_i}{x_i y_l} - \frac{x_j y_i}{x_i y_j} \right) \\
&= \frac{\mathrm{lcm}(g_{C,D}x_i y_j,\, g_{C',D'}x_i y_l)}{x_i y_j y_l} \cdot y_i f_{l,j}.
\end{align*}

We now verify that the expression above lies in $J_G$. Note that
\[
g_{C \setminus \{i,j,l\},\, D \cup \{i\} \setminus \{j,l\}} \;\Big| \; \frac{\mathrm{lcm}(g_{C,D}x_i y_j,\, g_{C',D'}x_i y_l) \cdot y_i}{x_i y_j y_l},
\]
and therefore,
\begin{equation}\label{divides}
g_{C \setminus \{i,j,l\},\, D \cup \{i\} \setminus \{j,l\}} \cdot f_{l,j} \;\Big| \; S(f,h).
\end{equation}

Moreover, since $g_{A,C,D}^{i,j} \in \GS$, we have that $C \cup D \cup \{i,j\} \in \DVV$, implying that this set contains a connected dominating set of $V_2$. In particular, as $\{j, l\} \subset V_2$, there exists a path from $j$ to $l$ within $(C \cup D \cup \{i\}) \setminus \{j,l\}$. Observe that
\[
(C \setminus \{i,j,l\}) \cup (D \cup \{i\}) = (C \cup D \cup \{i\}) \setminus \{j,l\},
\]
so by Proposition~\ref{grobner basis for JG}~(ii),
\[
g_{C \setminus \{i,j,l\},\, D \cup \{i\} \setminus \{j,l\}} \cdot f_{l,j} \in J_G.
\]
Then, combining this with~\eqref{divides}, we conclude that $S(f,h) \in J_G$. Since $\mathcal{G}'$ is a Gröbner basis for $J_G$ by Proposition~\ref{grobner basis for JG}~(i), it follows that $S(f,h)$ reduces to zero.

\smallskip
\noindent\textbf{Case~3:} $f \in \GS$ and $h \in \mathcal{G}'$.  
Let $f = g_{A,C,D}^{i,j}$ and $h = u_{\pi} f_{k,l}$, where $i \in V_1$, $j \in V_2$, $k < l$, and $\pi$ is an admissible path from $k$ to $l$.

If $\{i,j\} = \{k,l\}$, then clearly $S(f,h) = 0$.  
If $\{i,j\}\cap \{k,l\}=\emptyset$, it follows using the same argument as in Case~2.2 that $S(f,h)$ reduces to zero.

It remains to consider the case $\lvert \{i,j\} \cap \{k,l\} \rvert = 1$. Without loss of generality, we may assume that $\{i,j\} \cap \{k,l\} = \{i\}$, since the case $\{i,j\} \cap \{k,l\} = \{j\}$ is analogous. We distinguish two subcases:

\smallskip
{\bf Case~3.1: $i=l$.} Since we are working under the assumption~\ref{assumption 2}, we have that $i<j$. In this case, we have
\[\textup{in}_{<}(f_{i,j})=x_{i}y_{j}\quad \text{and} \quad \textup{in}_{<}(f_{k,i})=x_{k}y_{i}.\]
Therefore, in this case $\textup{in}_{<}(f_{i,j})$ and $\textup{in}_{<}(f_{k,l})$ form a regular sequence. Consequently, arguing as in Case~2.2, we conclude that $S(f,h)$ reduces to zero. 

\smallskip
\noindent\textbf{Case~3.2:} $i = k$. In this case, we compute:
\begin{equation*}
\begin{aligned}
S(f,h) &= \frac{\textup{lcm}(g_{C,D} x_i y_j,\, g_{C',D'} x_i y_l)}{x_i y_j}(x_i y_j - x_j y_i)
- \frac{\textup{lcm}(g_{C,D} x_i y_j,\, g_{C',D'} x_i y_l)}{x_i y_l}(x_i y_l - x_l y_i) \\
&= \textup{lcm}(g_{C,D} x_i y_j,\, g_{C',D'} x_i y_l) \left( \frac{x_l y_i}{x_i y_l} - \frac{x_j y_i}{x_i y_j} \right) \\
&= \frac{\textup{lcm}(g_{C,D} x_i y_j,\, g_{C',D'} x_i y_l)}{x_i y_j y_l} \, y_i f_{l,j}.
\end{aligned}
\end{equation*}
We now distinguish two subcases, according to whether $l \in V_1$ or $l \notin V_1$.

\smallskip
\noindent\textbf{Case~3.2.1:} $l \notin V_{1}$. In this case, we have $l \in \mathcal{S} \cup V_{2}$. Since $g_{A,C,D}^{i,j} \in \GS$, it follows that $C \cup D \cup \{i,j\} \in \DVV$, which in particular implies that this set contains a connected dominating set of $\mathcal{S} \cup V_{2}$. In particular, since $\{j, l\} \subset \mathcal{S} \cup V_{2}$, we deduce that $j$ and $l$ are connected through $(C \cup D \cup \{i\}) \setminus \{j, l\}$, and since
\[
(C \setminus \{i,j,l\}) \cup (D \cup \{i\}) = (C \cup D \cup \{i\}) \setminus \{j, l\},
\]
it follows from Proposition~\ref{grobner basis for JG}~(ii) that
\[
g_{C \setminus \{i,j,l\},\, D \cup \{i\} \setminus \{j,l\}} f_{l,j} \in J_G.
\]
As in Case~2.3, we also have that
\[
g_{C \setminus \{i,j,l\},\, D \cup \{i\} \setminus \{j,l\}} f_{l,j} \mid S(f,h),
\]
from which it follows that $S(f,h) \in J_G$. Since $\mathcal{G}'$ is a Gröbner basis for $J_G$ (by Proposition~\ref{grobner basis for JG}~(i)), we conclude that $S(f,h)$ reduces to zero.

\smallskip
\noindent\textbf{Case~3.2.2:} $l \in V_{1}$. Note that
\[
C \setminus \{i,j,l\} \quad \text{and} \quad D \cup \{i\} \setminus \{j,l\}
\]
form a partition of $(C \cup D \cup \{i\}) \setminus \{j,l\}$. Since $C \cup D \cup \{i,j\} \in \DVV$, and $l \in V_{1}$, $j \in V_{2}$, it follows from Definition~\ref{definition gACDij} that
\[
g_{C \setminus \{i,j,l\},\, D \cup \{i\} \setminus \{j,l\}} f_{l,j} = g^{l,j}_{C \cup D \cup \{i,j,l\},\, C \setminus \{i,j,l\},\, D \cup \{i\} \setminus \{j,l\}} \in \GS.
\]
Moreover, as in the previous case, this polynomial divides $S(f,h)$. Hence, $S(f,h)$ is divisible by an element of $\GS$, and therefore reduces to zero.

\smallskip

Having treated all possible cases and shown that every $S$-pair $S(f,h)$ reduces to zero, we conclude that $\mathcal{G}$ is a Gröbner basis for $J_{G} + \JS$.
\end{proof}

As a consequence of Proposition~\ref{proposition grobner basis JG+JS}, it follows that the ideal $J_G + \JS$ is radical.

\begin{corollary}\label{is radical}
$J_G + \JS$ is a radical ideal.
\end{corollary}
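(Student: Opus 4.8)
The plan is to deduce radicality from the shape of the initial ideal. By Proposition~\ref{proposition grobner basis JG+JS}, the set $\mathcal{G}$ is a Gröbner basis of $J_G + \JS$ with respect to the lexicographic order $<$, so the initial ideal $\textup{in}_{<}(J_G + \JS)$ is generated by the leading monomials of the elements of $\mathcal{G}$. First I would record that each of these leading monomials is squarefree, so that $\textup{in}_{<}(J_G + \JS)$ is a squarefree monomial ideal and hence radical. Then I would invoke the classical fact that an ideal whose initial ideal, with respect to some monomial order, is radical is itself radical; this follows from the flatness of the Gröbner degeneration, reducedness being preserved under specialization within a flat family, and the same principle is already used in the proof of Theorem~\ref{theorem empty}.

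The verification that the leading monomials are squarefree splits according to the two families comprising $\mathcal{G}$. For a generator $u_\pi f_{i,j}$ attached to an admissible path $\pi\colon i = i_0, i_1, \ldots, i_r = j$ with $i<j$, one has $\textup{in}_{<}(f_{i,j}) = x_i y_j$, so its leading monomial is $u_\pi\, x_i y_j = \bigl(\prod_{i_k > j} x_{i_k}\bigr)\bigl(\prod_{i_l < i} y_{i_l}\bigr) x_i y_j$; since the vertices $i_0, \ldots, i_r$ are pairwise distinct, the indices occurring in the $x$-part are all strictly larger than $j$ together with $i$, and those in the $y$-part are all strictly smaller than $i$ together with $j$, so no variable is repeated. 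For a generator $g^{i,j}_{A,C,D} = g_{C,D}\cdot f_{i,j} \in \GS$, Assumption~\ref{assumption 2} forces $i<j$, since $i\in V_1$ while $j\in V_2$ lies outside $V_1$; hence again $\textup{in}_{<}(f_{i,j}) = x_i y_j$ and the leading monomial equals $\bigl(\prod_{k\in C} x_k\bigr)\bigl(\prod_{k\in D} y_k\bigr) x_i y_j$, which is squarefree because $C$, $D$, $\{i\}$, $\{j\}$ are pairwise disjoint (as $C\amalg D = A\setminus\{i,j\}$).

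I do not anticipate a genuine obstacle: the substantive work is already contained in Proposition~\ref{proposition grobner basis JG+JS}. The only points demanding attention are the bookkeeping confirming squarefreeness — in particular noticing that Assumption~\ref{assumption 2} is precisely what ensures $i<j$ for every $g^{i,j}_{A,C,D}$, so that $\textup{in}_{<}(f_{i,j})$ is consistently $x_i y_j$ rather than $x_j y_i$ — and supplying a suitable citation for the implication that an ideal with radical initial ideal is radical.
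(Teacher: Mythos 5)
Your proof is correct and takes essentially the same route as the paper: invoke Proposition~\ref{proposition grobner basis JG+JS} to obtain a Gröbner basis, observe that all leading monomials are squarefree so that $\textup{in}_{<}(J_G+\JS)$ is radical, and conclude by the standard fact that a graded ideal with squarefree (hence radical) initial ideal is itself radical. The only difference is that you spell out the squarefreeness check (correctly, including the role of Assumption~\ref{assumption 2} in ensuring $\textup{in}_<(f_{i,j})=x_iy_j$ for the generators of $\GS$), whereas the paper leaves that verification implicit.
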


\begin{proof}
The assertion follows from Proposition~\ref{proposition grobner basis JG+JS} and the following general fact: let $I\subset S$
be a graded ideal with the property that $\textup{in}_{<}(I)$ is squarefree for some monomial order $<$. Then $I$ is a radical ideal.
\end{proof}

We are now ready to state the main result of this section. This theorem shows that the algebraic invariant $\VPJ$ agrees with the number $\gamma_{c}(V_{1},V_{2})$ introduced in Definition~\ref{definition gACDij 21},  and provides a combinatorial interpretation of this invariant in terms of our new graph-theoretic notion of dominance specifically adapted to the setting of minimal cuts.

\begin{theorem}\label{main theorem}
Let $G$ be a connected graph on $[n]$ and let $\mathcal{S}$ be a minimal cut of $G$. Then, we have
\[
\VPJ = \gamma_{c}(V_{1}, V_{2}) = \min\left\{ \lvert A \rvert : A \in \DVV \right\},
\]
where
\[
\DVV = \left\{ A \subset V_{1} \cup V_{2} \mid A \cap V_{i} \in \mathcal{D}_{c}(G_{V_{i} \cup \mathcal{S}})\ \text{for each } i = 1, 2 \right\}.
\]
\end{theorem}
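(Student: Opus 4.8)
The plan is to assemble the results established earlier in this section and then perform a short combinatorial translation. The key point is that all the substantive algebra has already been done: by Corollary~\ref{is radical} the ideal $J_G + \JS$ is radical, so the equality case of Theorem~\ref{corollary radical} applies and gives
\[
\VPJ = \min\{\, |A_1| + 2|A_2| : A \in \TS \,\}.
\]
It therefore suffices to prove that this combinatorial quantity equals $\gamma_{c}(V_{1},V_{2}) = \min\{|A| : A \in \DVV\}$, and for this I would use the description of $\TS$ from Proposition~\ref{characterization TS}.

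For the inequality $\gamma_{c}(V_{1},V_{2}) \le \VPJ$, take any $A \in \TS$. Every element of $A$ is a one- or two-element subset of $[n]$ lying outside $\mathcal{D}(M(\mathcal{S}))$, hence disjoint from $\mathcal{S}$, so $\supp(A) \subseteq V_1 \cup V_2$; moreover condition~(i) of Proposition~\ref{characterization TS} says exactly that $\supp(A) \cap V_i \in \mathcal{D}_c(G_{V_i \cup \mathcal{S}})$ for $i = 1,2$, so $\supp(A) \in \DVV$. Since obviously $|\supp(A)| \le |A_1| + 2|A_2|$, we obtain $\gamma_{c}(V_{1},V_{2}) \le |\supp(A)| \le |A_1| + 2|A_2|$, and taking the minimum over $A \in \TS$ gives the claim.

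For the reverse inequality $\VPJ \le \gamma_{c}(V_{1},V_{2})$, choose $A' \in \DVV$ with $|A'| = \gamma_{c}(V_{1},V_{2})$. Because $\mathcal{S}$ is a minimal cut, $V_1$ and $V_2$ are nonempty, so each $A' \cap V_i$, being a connected dominating set of the nonempty graph $G_{V_i \cup \mathcal{S}}$, is nonempty; pick $i_0 \in A' \cap V_1$ and $j_0 \in A' \cap V_2$ and set
\[
A := \{\, \{k\} : k \in A' \setminus \{i_0,j_0\} \,\} \cup \{\, \{i_0,j_0\} \,\}.
\]
Then $\supp(A) = A'$, so condition~(i) of Proposition~\ref{characterization TS} holds, and $A_2 = \{\{i_0,j_0\}\} \neq \emptyset$, so condition~(ii) holds; hence $A \in \TS$. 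Finally $|A_1| + 2|A_2| = |A' \setminus \{i_0,j_0\}| + 2 = |A'| = \gamma_{c}(V_{1},V_{2})$, giving $\VPJ \le \gamma_{c}(V_{1},V_{2})$. Combining the two inequalities yields the theorem.

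I expect no serious obstacle here: the difficulty has been front-loaded into Proposition~\ref{characterization TS} (the structure of $\TS$), Lemma~\ref{concise generating set} (the concise generating set $\GS$), Proposition~\ref{proposition grobner basis JG+JS} (the Gröbner basis), and Corollary~\ref{is radical} (radicality). Given these, the only mildly delicate points are the elementary inequality $|\supp(A)| \le |A_1| + 2|A_2|$ and the observation that a minimizer $A' \in \DVV$ can be lifted to a transversal of $\Delta_{\mathcal{S}}$ of the same degree by replacing one vertex from each side by the pair $\{i_0,j_0\}$.
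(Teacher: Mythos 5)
Your proof is correct and follows exactly the paper's own route: invoke Corollary~\ref{is radical} for radicality, apply Theorem~\ref{corollary radical} to reduce to $\min\{|A_1|+2|A_2| : A\in\TS\}$, and translate via Proposition~\ref{characterization TS}. The only difference is that you spell out the final combinatorial translation (the two inequalities, including the explicit lift of a minimizer $A'\in\DVV$ to a transversal via $\{\{k\}:k\in A'\setminus\{i_0,j_0\}\}\cup\{\{i_0,j_0\}\}$) which the paper states without detail; this is a welcome amplification rather than a departure.
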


\begin{proof}
By Corollary~\ref{is radical}, the ideal $J_{G} + \JS$ is radical. Therefore, Theorem~\ref{corollary radical} implies that
\[
\VPJ = \min \left\{\, |A_1| + 2|A_2| : A \in \mathcal{T}(\mathcal{S}) \,\right\}.
\]
Moreover, Proposition~\ref{characterization TS} characterizes the sets $A \in \mathcal{T}(\mathcal{S})$ as those for which $\supp(A) \in \DVV$ and there exist vertices $i \in V_1$ and $j \in V_2$ such that $\{i, j\} \in A$. We thus conclude that
\[
\VPJ = \min\left\{ \lvert A \rvert : A \in \DVV \right\} = \gamma_c(V_1, V_2).
\]
\end{proof}

\section{V-number of cycle graphs} \label{section v numbers of cycle graphs}

In this section, we study the v-number of the cycle graph $C_n$, which we now define.

\begin{definition}\label{definition cycle graph}
The cycle graph $C_{n}$ is defined by
\[
V(C_n) = [n] \quad \text{and} \quad E(C_n) = \{(i, i+1) : i \in [n]\},
\]
with the convention that $n+1 = 1$.
\end{definition}

It was established in \textup{\cite[Corollary~4.10]{dey2024v}} that
\begin{equation}\label{conjecture}
\textup{v}(J_{C_n}) \leq \textstyle \left\lceil \frac{2n}{3} \right\rceil,
\end{equation}
and it was conjectured that equality holds for $n\geq 6$. Our main result in this section, Theorem~\ref{main theorem 2}, provides an almost complete confirmation of this conjecture: we show that
\[
\VCNN \in \textstyle \left\{ \left\lceil \frac{2n}{3} \right\rceil,\ \left\lceil \frac{2n}{3} \right\rceil - 1 \right\}.
\]

\begin{remark}
The set $\min(C_n)$ consists of the empty set together with all subsets $\mathcal{S} \subset [n]$ of size at least two for which no two elements are adjacent in $C_{n}$; that is, $(i,j) \notin E(C_{n})$ for all $i, j \in \mathcal{S}$.
\end{remark}

To compute $\VCNN$, we fix a subset $\mathcal{S} \in \min(C_n)$ with $\size{\mathcal{S}} \geq 3$, and study the localized v-number $\textup{v}_{P_{\mathcal{S}}}(J_{C_n})$. %Throughout, we assume that $\size{\mathcal{S}} \geq 3$.

\begin{notation}\label{notation c1 and c2}
We establish the following terminology:
\begin{itemize}
\item A subset of $[n]$ is called a \emph{interval} if it is of the form $[i,j]$ or $[1,i]\cup [j,n]$ for some $1\leq i\leq j\leq n$. 
For $1\leq i\leq j\leq n$, we denote the subset $[1,i]\cup [j,n]$ by $[j,i]$.
\item The complement of $\mathcal{S}$ is a disjoint union of intervals $I_1, \ldots, I_k$, where each interval is denoted by $I_j = [a_j, b_j]$ and $a_{j+1}=b_{j}+2$ for each $j \in [k]$. Note that $k=\size{\mathcal{S}}$ and \[\{b_{1}+1,\ldots,b_{k}+1\}=\{a_{1}-1,\ldots,a_{k}-1\}=\mathcal{S}.\] %We also define the set $F = \cup_{j=1}^{k} \{a_j, b_j\}$.
\item We set
\[
C_{1}(\mathcal{S}) = \{ j \in [k] \,:\, |I_j| = 1 \}  
\quad \text{and} \quad 
C_{2}(\mathcal{S}) = \{ j \in [k] \,:\, |I_j| \geq 2 \}  .
\]
\end{itemize}
\end{notation}

Our first objective is to address the following problem:

\begin{question}\label{localized for cycles}
Determine the localized v-number $\textup{v}_{P_{\mathcal{S}}}(J_{C_n})$.
\end{question}

\begin{comment}
\begin{theorem}\label{localized for cycles}
The following equality holds:
\[\VCN=n-\size{\mathcal{S}}+\size{\{j\in [k]:\size{I_{j}}=1\}}.\]
\end{theorem}
\end{comment}

Following the approach outlined is Subsection~\ref{reduction with transversals}, we start by giving a description of the set of transversals $\TS$.

\begin{lemma}\label{charact TS}
A subset $A \subset \textstyle (\binom{[n]}{1} \cup \binom{[n]}{2})\setminus \mathcal{D}(M(\mathcal{S}))$ belongs to $\mathcal{T}(\mathcal{S})$ if and only if the following two conditions are satisfied:
\begin{itemize}
\item[{\rm (i)}] $\supp(A)=[n]\setminus \mathcal{S}$.
\item[{\rm (ii)}] For each $j \in [k]$, there exist vertices $u \in I_j$ and $w \in I_{j+1}$ such that $\{u, w\} \in A$.
\end{itemize}
\end{lemma}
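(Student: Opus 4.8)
The plan is to unwind the definition of $\mathcal{T}(\mathcal{S})$ into an explicit combinatorial condition and then verify the two implications separately. Since every member of $A$ avoids $\mathcal{D}(M(\mathcal{S}))$, we automatically have $\supp(A)\subseteq[n]\setminus\mathcal{S}$, so for each $\mathcal{S}'\in\min(C_n)\setminus\{\mathcal{S}\}$ the requirement $A\cap(\mathcal{D}(M(\mathcal{S}'))\setminus\mathcal{D}(M(\mathcal{S})))\neq\emptyset$ is the same as $A\cap\mathcal{D}(M(\mathcal{S}'))\neq\emptyset$. Reading off $\mathcal{D}(M(\mathcal{S}'))$ on singletons and pairs, the latter says that either some vertex appearing in $A$ lies in $\mathcal{S}'$, or some pair $\{u,w\}\in A$ has $u,w$ in the same connected component of $C_n$ restricted to $[n]\setminus\mathcal{S}'$. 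Consequently $A\notin\mathcal{T}(\mathcal{S})$ exactly when there is $\mathcal{S}'\in\min(C_n)\setminus\{\mathcal{S}\}$ with $\supp(A)\cap\mathcal{S}'=\emptyset$ such that $\mathcal{S}'$ separates every pair of $A$ (the two endpoints fall into distinct components of $C_{n,[n]\setminus\mathcal{S}'}$). I would also record two elementary facts about cycles: $\min(C_n)\setminus\{\emptyset\}$ consists precisely of the independent sets of $C_n$ of size at least $2$; and for any $\mathcal{S}''\subseteq[n]$ the components of $C_{n,[n]\setminus\mathcal{S}''}$ are the maximal intervals of consecutive vertices of $[n]\setminus\mathcal{S}''$.

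For the direction ``(i) and (ii) $\Rightarrow A\in\mathcal{T}(\mathcal{S})$'', suppose there were a separating $\mathcal{S}'$ as above. By (i), $\supp(A)=[n]\setminus\mathcal{S}$, so $\supp(A)\cap\mathcal{S}'=\emptyset$ forces $\mathcal{S}'\subseteq\mathcal{S}$, and as $\mathcal{S}'\neq\mathcal{S}$ we get $\mathcal{S}'\subsetneq\mathcal{S}$. Pick $s\in\mathcal{S}\setminus\mathcal{S}'$; it sits between two consecutive intervals $I_j,I_{j+1}$, and since $s\notin\mathcal{S}'$ while $I_j\cup I_{j+1}\subseteq[n]\setminus\mathcal{S}\subseteq[n]\setminus\mathcal{S}'$, the whole block $I_j\cup\{s\}\cup I_{j+1}$ lies in a single component of $C_{n,[n]\setminus\mathcal{S}'}$. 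But (ii) applied to $j$ provides a pair $\{u,w\}\in A$ with $u\in I_j$ and $w\in I_{j+1}$, which is then not separated by $\mathcal{S}'$ --- a contradiction.

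For the direction ``$A\in\mathcal{T}(\mathcal{S})\Rightarrow$ (i) and (ii)'', I would first prove (ii): fix $j$, let $s_j\in\mathcal{S}$ be the vertex between $I_j$ and $I_{j+1}$, and set $\mathcal{S}'=\mathcal{S}\setminus\{s_j\}$; since $|\mathcal{S}|\geq 3$ this is an independent set of size $\geq 2$ different from $\mathcal{S}$, hence lies in $\min(C_n)\setminus\{\mathcal{S}\}$. The components of $C_{n,[n]\setminus\mathcal{S}'}$ are $I_j\cup\{s_j\}\cup I_{j+1}$ together with the remaining intervals $I_\ell$. Because $\supp(A)\cap\mathcal{S}'=\emptyset$, the member of $A$ meeting $\mathcal{D}(M(\mathcal{S}'))$ must be a pair with both endpoints in one of these components; as $A$ avoids $\mathcal{D}(M(\mathcal{S}))$ its endpoints lie in distinct intervals, so they must lie in the merged block, and since $s_j\notin\supp(A)$ this forces one endpoint in $I_j$ and one in $I_{j+1}$. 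For (i), recall $\supp(A)\subseteq[n]\setminus\mathcal{S}$; for the reverse inclusion take $v\in I_j$. If $|I_j|=1$ then (ii) for index $j$ already puts $v\in\supp(A)$. If $|I_j|\geq 2$ and $v\notin\supp(A)$, I would exhibit a separating cut by setting $\mathcal{S}'=(\mathcal{S}\setminus N_{C_n}(v))\cup\{v\}$, where $N_{C_n}(v)$ is the set of cycle-neighbors of $v$ lying in $\mathcal{S}$ (one or two of them). Then $\mathcal{S}'$ is independent, has size $\geq|\mathcal{S}|-1\geq 2$, and differs from $\mathcal{S}$, so $\mathcal{S}'\in\min(C_n)\setminus\{\mathcal{S}\}$; we have $\supp(A)\cap\mathcal{S}'=\emptyset$ since $\supp(A)$ misses $\mathcal{S}\cup\{v\}$; and --- splitting into the subcases $v$ interior to $I_j$, $v=a_j$, and $v=b_j$ --- one checks that every component of $C_{n,[n]\setminus\mathcal{S}'}$ is either contained in a single interval $I_\ell$ or of the form $I_\ell\cup\{s\}$ with $s\in\mathcal{S}$, so no pair of $A$ (endpoints in distinct intervals, avoiding $\mathcal{S}$) can fail to be separated by $\mathcal{S}'$. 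This contradicts $A\in\mathcal{T}(\mathcal{S})$, establishing (i).

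I expect the main obstacle to be the bookkeeping in the proof of (i): one must choose the auxiliary cut $\mathcal{S}'$ so that deleting $v$ genuinely detaches it from the rest of its interval while keeping $\mathcal{S}'$ an independent set of admissible size, which is exactly why the interior/endpoint distinction appears, and one has to track which original intervals merge or split under each $\mathcal{S}'$. None of the individual verifications is deep, but the analysis of $\mathcal{D}(M(\mathcal{S}'))$ for the various auxiliary cuts is where the care lies.
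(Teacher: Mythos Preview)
Your proof is correct and follows essentially the same route as the paper: the same auxiliary cuts $\mathcal{S}\setminus\{s_j\}$ for (ii) and, for (i), the same interior/endpoint case split (your unified formula $\mathcal{S}'=(\mathcal{S}\setminus N_{C_n}(v))\cup\{v\}$ just packages the paper's separate choices $\mathcal{S}\cup\{v\}$ and $(\mathcal{S}\cup\{v\})\setminus\{a_j-1\}$ into one expression). One harmless slip: in the $|I_j|\geq 2$ case you actually get $|\mathcal{S}'|\geq|\mathcal{S}|$ rather than $|\mathcal{S}|-1$, since $v$ has at most one neighbor in $\mathcal{S}$; the conclusion $|\mathcal{S}'|\geq 2$ is unaffected.
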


\begin{proof}
Assume first that $A \in \TS$, and we will prove that conditions (i) and (ii) are necessary. We begin with condition (ii).

Fix $j \in [k]$ and consider the element $b_j + 1 \in \mathcal{S}$. Since $\mathcal{S} \setminus \{b_j + 1\} \in \min(C_n)$, and $A \in \TS$, it follows that
\[
A \cap \mathcal{D}(M(\mathcal{S} \setminus \{b_j + 1\}))  \neq \emptyset.
\]
Observe that
\[
\mathcal{D}(M(\mathcal{S} \setminus \{b_j + 1\})) \setminus \DMS = \{ \{u,v\} : u \in I_j, v \in I_{j+1} \}.
\]
Hence, given that $A\cap \DMS=\emptyset$, it follows that for each $j \in [k]$, the set $A$ contains a pair $\{u,v\}$ with $u \in I_j$ and $v \in I_{j+1}$, which verifies condition (ii).

We now verify condition (i). Let $x \in [n] \setminus \mathcal{S}$ and suppose $x \in I_j = [a_j, b_j]$. We claim that $x \in \supp(A)$. We consider cases:

\smallskip
\noindent\textbf{Case 1:} $x \notin \{a_j, b_j\}$. In this case, let $\mathcal{S}' = \mathcal{S} \cup \{x\}$. Since $x \notin \{a_j, b_j\}$, it follows that $\mathcal{S}' \in \min(C_n)$. Since $A \in \TS$, we have
\[
A \cap \left( \mathcal{D}(M(\mathcal{S}')) \setminus \DMS \right) \neq \emptyset.
\]
Moreover, every element of $\mathcal{D}(M(\mathcal{S}')) \setminus \DMS$ contains $x$, so $x \in \supp(A)$.

\smallskip
\noindent\textbf{Case 2:} $x = a_j$ or $x = b_j$. Without loss of generality, assume $x = a_j$ (the other case is analogous). If $|I_j| = 1$, then $I_j = \{x\}$. Condition (ii) implies that $A$ contains a pair $\{u,v\}$ with $u \in I_j$, and hence $u = x$, so $x \in \supp(A)$.

Now suppose $|I_j| \geq 2$. Define $\mathcal{S}' = (\mathcal{S} \cup \{x\}) \setminus \{a_j - 1\}$. Since $|I_j| \geq 2$, it follows that $\mathcal{S}' \in \min(C_n)$. Then, since $A \in \TS$, we again obtain
\begin{equation} \label{nonempty}
A \cap \left( \mathcal{D}(M(\mathcal{S}')) \setminus \DMS \right) \neq \emptyset.
\end{equation}
As before, each subset in the difference contains $x$, so $x \in \supp(A)$.

\smallskip
This shows that $[n] \setminus \mathcal{S} \subset \supp(A)$. Moreover, since $A \cap \DMS = \emptyset$ and every element of $\mathcal{S}$ is a loop in $M(\mathcal{S})$, we obtain $\supp(A) \cap \mathcal{S} = \emptyset$, which yields $\supp(A) = [n] \setminus \mathcal{S}$.

\smallskip

For the converse, assume that conditions (i) and (ii) hold for $A$, and we aim to show that $A \in \TS$. That is, for any $\mathcal{S}' \in \min(C_n) \setminus \{\mathcal{S}\}$, we must verify that equation~\eqref{nonempty} holds. We distinguish two cases:

\smallskip
\noindent\textbf{Case 1:} $\mathcal{S}' \not\subset \mathcal{S}$. Choose $x \in \mathcal{S}' \setminus \mathcal{S}$. Then $x \in [n] \setminus \mathcal{S} = \supp(A)$, so there exists $e \in A$ with $x \in e$. Since $x \in \mathcal{S}'$, it is a loop in $M(\mathcal{S}')$, and thus $e \in \mathcal{D}(M(\mathcal{S}'))$. Since $A \cap \DMS=\emptyset$, the intersection in~\eqref{nonempty} is nonempty.

\smallskip
\noindent\textbf{Case 2:} $\mathcal{S}' \subsetneq \mathcal{S}$. Then there exists $j \in [k]$ such that $b_j + 1 \in \mathcal{S}\setminus \mathcal{S}'$. Hence, $I_j \cup \{b_j + 1\} \cup I_{j+1} \subseteq [n] \setminus \mathcal{S}'$. In particular, any pair $\{u,v\}$ with $u \in I_j$ and $v \in I_{j+1}$ is dependent in $M(\mathcal{S}')$. By condition (ii), $A$ contains such a pair, and so the intersection in~\eqref{nonempty} is again nonempty.

\smallskip

This proves that $A \in \TS$, completing the proof.
\end{proof}

From the description of $\mathcal{T}(\mathcal{S})$, one derives a finite generating set for the ideal $\JS$, as given in Definition~\ref{notation JS}. Our goal now is to provide a smaller generating set. We first introduce some notation.

\begin{notation}\label{F and P}
Let $F := \cup_{j=1}^{k} \{a_j, b_j\}$. We define the polynomial
\[
P := \prod_{j=1}^{k} f_{b_j, a_{j+1}}= \prod_{j=1}^{k} (x_{b_{j}}y_{a_{j+1}}-x_{a_{j+1}}y_{b_{j}}).
\]
\end{notation}

Before stating the next lemma, recall the definition of $\JS$ from Definition~\ref{notation JS} and the definition of $g_{C,D}$ from Definition~\ref{important notation}.

\begin{lemma}\label{concise generating set 2}
Using Notation~\ref{F and P}, we have the following equality of ideals:
\begin{equation}\label{equality JS2}
\sqrt{J_{C_{n}} + \JS} = \sqrt{J_{C_{n}} + \left( P \cdot g_{C,D} \ : \ C \amalg D = [n] \setminus (\mathcal{S} \cup F) \right)}.
\end{equation}
\end{lemma}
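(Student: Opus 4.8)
The plan is to prove the two inclusions of radicals separately, working at the level of varieties since both sides are radical by construction. Write $J := J_{C_n}+\JS$ and $J' := J_{C_n} + \left(P\cdot g_{C,D} : C\amalg D = [n]\setminus(\mathcal{S}\cup F)\right)$; I will show $\sqrt J = \sqrt{J'}$ by showing $\mathbb{V}(J)=\mathbb{V}(J')$, equivalently that every generator of each ideal vanishes on the variety of the other (modulo $J_{C_n}$, which is common to both). For the inclusion $\sqrt{J'}\subseteq\sqrt J$: each generator $P\cdot g_{C,D}$ of $J'$ should itself be an element of $\JS$, or at least lie in $J$. Indeed, taking $A' := \{\{a_j,b_j\}^{\complement\text{-style pairs}}\}$... more precisely, I would exhibit a transversal $A\in\TS$ realizing $P\cdot g_{C,D}$ as a $g_{A,C,D}$: set $A_2 := \{\{b_j, a_{j+1}\} : j\in[k]\}$ and $A_1 := \{\{x\} : x\in [n]\setminus(\mathcal{S}\cup F)\}$ — wait, one must check $\supp(A)=[n]\setminus\mathcal{S}$: the pairs $\{b_j,a_{j+1}\}$ cover $F$ (note $a_{j+1}=b_j+2$, and every element of $F$ is some $a_j$ or $b_j$, hence appears in $A_2$), and the singletons cover the rest, so (i) of Lemma~\ref{charact TS} holds; and for each $j$, $\{b_j,a_{j+1}\}\in A$ has $b_j\in I_j$, $a_{j+1}\in I_{j+1}$, so (ii) holds. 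Hence $A\in\TS$ and $P\cdot g_{C,D} = g_{A,C,D}\in\JS$, giving $J'\subseteq J$, whence $\sqrt{J'}\subseteq\sqrt J$.

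For the reverse inclusion $\sqrt J\subseteq\sqrt{J'}$, I must show every generator $g_{B,C,D}$ of $\JS$ (for $B\in\TS$, $C\amalg D=B_1$) lies in $\sqrt{J'}$, i.e. vanishes on $\mathbb{V}(J')$. Take a point $\gamma\in\mathbb{V}(J')$. If $\gamma\in\mathbb{V}(P\cdot g_{C',D'})$ for all relevant partitions $\{C',D'\}$: the idea is that, by an argument like the one in the proof of Theorem~\ref{corollary radical} / Proposition~\ref{grobner basis for JG}(ii), if some factor $f_{b_j,a_{j+1}}$ vanishes at $\gamma$ then the two "interval blocks" $I_j$ and $I_{j+1}$ get linked in the matroid $M(\gamma)$, and if enough of the $g_{C',D'}$-variables vanish we also collapse singletons; in either case one shows, using that $\gamma\in\mathbb{V}(J_{C_n})$ (so consecutive vertices are linearly dependent along each interval), that $\gamma$ lies in $\mathbb{V}(P_{\mathcal{S}'})$ for some $\mathcal{S}'\in\min(C_n)\setminus\{\mathcal{S}\}$, or in $\mathbb{V}(P_{\mathcal{S}})$. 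By Theorem~\ref{theorem transversal} (applied with the full transversal ideal) $g_{B,C,D}$ vanishes there. The cleanest route is probably: show $\mathbb{V}(J_{C_n})\cap\mathbb{V}(P\cdot g_{C,D}\,\forall\,C,D) \;=\; \mathbb{V}(J_{C_n})\cap\mathbb{V}(\JS)$ directly, by checking a point $\gamma\in\mathbb{V}(J_{C_n})$ kills all $P\cdot g_{C,D}$ iff it kills all $g_{B,C,D}$; the $(\Leftarrow)$ direction is the content of the first inclusion, and for $(\Rightarrow)$ one uses that along each interval $I_j$ the vectors $\gamma_x$, $x\in I_j$, are pairwise dependent (as $\gamma\in\mathbb{V}(J_{C_n})$ and $I_j$ induces a path), so knowing the vanishing behaviour of $\gamma$ on the "boundary" variables $F$ and the "interior" variables $[n]\setminus(\mathcal{S}\cup F)$ determines it on all of $[n]\setminus\mathcal{S}$, allowing one to evaluate an arbitrary $g_{B,C,D}$.

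The main obstacle I anticipate is the $(\Rightarrow)$ direction of that last equivalence: carefully converting "$P\cdot g_{C,D}(\gamma)=0$ for every partition $\{C,D\}$ of $[n]\setminus(\mathcal{S}\cup F)$" into a clean statement about which $\gamma_x$ vanish and which pairs $(\gamma_u,\gamma_v)$ are dependent, and then checking this forces $g_{B,C,D}(\gamma)=0$ for an arbitrary transversal $B$ whose $B_2$ may use pairs other than the $\{b_j,a_{j+1}\}$. The key simplification to exploit is that on $\mathbb{V}(J_{C_n})$ the vectors indexed by a single interval are mutually proportional, so a pair $\{u,v\}\in B_2$ with $u\in I_j, v\in I_{j+1}$ behaves, up to the nonvanishing path-monomials supplied by $\mathbb{V}(J_{C_n})$, exactly like the "canonical" pair $\{b_j,a_{j+1}\}$; making this precise (essentially a repeated application of Proposition~\ref{grobner basis for JG}(ii)) is the technical heart of the argument. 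Once that is in place, the equality of varieties, and hence of radicals, follows.
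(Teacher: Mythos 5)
Your overall strategy matches the paper's: reduce to an equality of varieties, establish $J'\subseteq J$ (in the paper's notation, generators of the small ideal are among the generators of $\JS$) and then show a point of $\mathbb{V}(J_{C_n})\cap\mathbb{V}(J')$ kills every $g_{B,C,D}$. Your treatment of the easy inclusion is correct and in fact more explicit than the paper's, which simply observes that $\{P\cdot g_{C,D}\}$ is a subset of the generating set of $\JS$; your exhibiting the specific transversal $A$ with $A_2=\{\{b_j,a_{j+1}\}\}_{j}$ and $A_1$ the singletons over $[n]\setminus(\mathcal{S}\cup F)$ is exactly the verification the paper leaves implicit.

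Where your write-up falls short is that you have only sketched the hard inclusion and explicitly flag the remaining work as an anticipated obstacle rather than resolving it. The clean structural device you are missing is a dichotomy on whether some coordinate of $\gamma$ vanishes. Fix $\gamma\in\mathbb{V}(J_{C_n})\cap\mathbb{V}(J')$ and an arbitrary generator $g_{B,C,D}$ with $B\in\TS$. If $\gamma_i=0$ for some $i\in[n]\setminus\mathcal{S}$, then since $\supp(B)=[n]\setminus\mathcal{S}$ (Lemma~\ref{charact TS}(i)) there is $e\in B$ with $i\in e$; whether $e=\{i\}$ or $e=\{i,j\}$, the corresponding factor of $g_{B,C,D}$ vanishes at $\gamma$, and you are done without touching $P$ at all. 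Otherwise $\gamma_i\neq 0$ for every $i\in[n]\setminus\mathcal{S}$, so you can pick a partition $\{C',D'\}$ of $[n]\setminus(\mathcal{S}\cup F)$ with $g_{C',D'}(\gamma)\neq 0$; since $P\cdot g_{C',D'}(\gamma)=0$, some $f_{b_j,a_{j+1}}(\gamma)=0$. Now use Lemma~\ref{charact TS}(ii) to get $\{u,v\}\in B$ with $u\in I_j$, $v\in I_{j+1}$, and chain the pairwise dependencies $\{\gamma_u,\gamma_{u+1}\},\dots,\{\gamma_{b_j-1},\gamma_{b_j}\}$ and $\{\gamma_{a_{j+1}},\gamma_{a_{j+1}+1}\},\dots,\{\gamma_{v-1},\gamma_v\}$ together with the vanishing of $f_{b_j,a_{j+1}}$ — crucially none of these intermediate vectors is zero — to conclude $f_{u,v}(\gamma)=0$. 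This is precisely the ``$\{u,v\}$ behaves like the canonical pair $\{b_j,a_{j+1}\}$'' reduction you anticipated, but it only goes through once the $\gamma_i=0$ case has been peeled off first; without that case split, the argument that along each interval the vectors are ``mutually proportional'' fails, since a zero vector breaks the chain.
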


\begin{proof}
Define the ideal
\[
J := \left( P \cdot g_{C,D} \ : \ C \amalg D = [n] \setminus (\mathcal{S} \cup F) \right).
\]
To prove the equality in~\eqref{equality JS2}, it suffices to show the equivalent identity of varieties:
\begin{equation}\label{equality of varieties}
\mathbb{V}(J_{C_{n}}) \cap \mathbb{V}(\JS) = \mathbb{V}(J_{C_{n}}) \cap \mathbb{V}(J).
\end{equation}
The inclusion ``$\subset$'' in~\eqref{equality of varieties} is immediate from the fact that the generators of~$J$ form a subset of those of~$\JS$.

For the reverse inclusion, let $\gamma = (\gamma_1, \ldots, \gamma_n) \in (\CC^2)^n$ be a point in the right-hand side of~\eqref{equality of varieties}. We aim to show that $\gamma \in \mathbb{V}(\JS)$, i.e., that $g_{A,C,D}(\gamma) = 0$ for every generator $g_{A,C,D}$ of $\JS$. We consider two cases.

\smallskip
\noindent
\textbf{Case 1:} $\gamma_i = 0$ for some $i \in [n] \setminus \mathcal{S}$. Since $A \in \TS$, Lemma~\ref{charact TS} implies that $\supp(A) = [n] \setminus \mathcal{S}$, so there exists $e \in A$ such that $i \in e$.

If $e = \{i\}$, then either $x_i$ or $y_i$ divides $g_{C,D}$, implying $g_{C,D}(\gamma) = 0$, and hence $g_{A,C,D}(\gamma) = 0$.

If $e = \{i,j\}$, then $f_{i,j}$ divides $g_{A,C,D}$. Since $\gamma_i = 0$, it follows that $g_{A,C,D}(\gamma) = 0$.

\smallskip
\noindent
\textbf{Case 2:} $\gamma_i \neq 0$ for all $i \in [n] \setminus \mathcal{S}$. In particular, for every $i \in [n] \setminus (\mathcal{S} \cup F)$, at least one coordinate of $\gamma_i$ is nonzero. Therefore, there exists a partition $\{C', D'\}$ of $[n] \setminus (\mathcal{S} \cup F)$ such that $g_{C',D'}(\gamma) \neq 0$.

Since $\gamma \in \mathbb{V}(J)$, we have
\[
g_{C',D'}(\gamma) \cdot P(\gamma) = 0,
\]
and thus $P(\gamma) = 0$. Hence, for some $j \in [k]$, we must have $f_{b_j,a_{j+1}}(\gamma) = 0$, meaning that $\gamma_{b_j}$ and $\gamma_{a_{j+1}}$ are linearly dependent.

Moreover, since $A \in \TS$, there exist $u \in I_j$ and $v \in I_{j+1}$ such that $\{u,v\} \in A$. As $\gamma \in \mathbb{V}(J_{C_{n}})$, all the pairs
\begin{equation}\label{linearly dependent}
\{\gamma_u,\gamma_{u+1}\}, \ldots, \{\gamma_{b_j - 1}, \gamma_{b_j}\} \quad \text{and} \quad \{\gamma_{a_{j+1}}, \gamma_{a_{j+1}+1}\}, \ldots, \{\gamma_{v-1}, \gamma_v\}
\end{equation}
are linearly dependent. Combined with the dependence of $\gamma_{b_j}$ and $\gamma_{a_{j+1}}$, and using that none of the vectors in~\eqref{linearly dependent} is zero, it follows that $\gamma_u$ and $\gamma_v$ are linearly dependent.

Since $\{u,v\} \in A$, we conclude that $f_{u,v}$ divides $g_{A,C,D}$, and therefore $g_{A,C,D}(\gamma) = 0$. This completes the proof.
\end{proof}

\begin{definition}\label{simplified notation}
For ease of notation, let $I_{\mathcal{S}}$ denote the ideal
\[
I_{\mathcal{S}} := \left( P \cdot g_{C,D} \ : \ C \amalg D = [n] \setminus (\mathcal{S} \cup F) \right),
\]
and let $\mathcal{G}_{\mathcal{S}}$ be its generating set.
\end{definition}

The next natural step, following the strategy outlined in Subsection~\ref{reduction with transversals}, is to determine the radical of $J_{C_n} + \JS$. While this would typically require computing a Gröbner basis for the ideal, in light of Lemma~\ref{concise generating set 2}, we instead focus on computing a Gröbner basis for the ideal $J_{C_n} + I_{\mathcal{S}}$. To carry out this computation, we must first find an appropriate monomial ordering. The monomial order introduced in the following definition is a relabeling of the lexicographical order from Definition~\ref{lexicographical order}, yielding a modified version adapted to the new vertex labeling.

\begin{definition}\label{sigma admissible}
Let $\sigma$ be a permutation of $[n]$. We define $<_{\sigma}$ to be the lexicographic term order on the polynomial ring $\CC[x_1,\ldots,x_n, y_1,\ldots,y_n]$ induced by
\[
x_{\sigma^{-1}(1)} > x_{\sigma^{-1}(2)} > \cdots > x_{\sigma^{-1}(n)} > y_{\sigma^{-1}(1)} > y_{\sigma^{-1}(2)} > \cdots > y_{\sigma^{-1}(n)}.
\]
Let $i,j$ be vertices with $\sigma(i) < \sigma(j)$. A path 
    \[
    \pi: i = i_0, i_1, \ldots, i_r = j
    \]
    from $i$ to $j$ is called $\sigma$-\emph{admissible} if the following conditions hold:
    \begin{itemize}
        \item[{\rm (i)}] The vertices $i_0, \ldots, i_r$ are pairwise distinct.
        \item[{\rm (ii)}] For each $1 \leq k \leq r-1$, either $\sigma(i_k) < \sigma(i)$ or $\sigma(i_k) > \sigma(j)$.
        \item[{\rm (iii)}] For any proper subset $\{j_1, \ldots, j_s\}$ of $\{i_1, \ldots, i_{r-1}\}$, the sequence $i, j_1, \ldots, j_s, j$ is not a path in $G$.
    \end{itemize}
    To a $\sigma$-admissible path $\pi$, we associate the monomial
    \[
    u_\pi = \prod_{\sigma(i_k) > \sigma(j)} x_{i_k} \cdot \prod_{\sigma(i_l) < \sigma(i)} y_{i_l}.
    \]
\end{definition}

\begin{definition} \label{condition v}
Let $\sigma$ be a permutation of $[n]$. We say that $\sigma$ is \emph{$\mathcal{S}$-consistent} if it satisfies the following conditions for each $j \in [k]$:
\begin{enumerate}
    \item[{\rm (i)}] If $\sigma(b_j) < \sigma(a_{j+1})$ and $|I_{j+1}| \geq 2$, then $\sigma(a_{j+1}) < \sigma(a_{j+1}+1)$.
    \item[{\rm (ii)}] If $\sigma(b_j) < \sigma(a_{j+1})$ and $|I_j| \geq 2$, then $\sigma(b_j - 1) < \sigma(b_j)$.
    \item[{\rm (iii)}] If $\sigma(b_j) > \sigma(a_{j+1})$ and $|I_{j+1}| \geq 2$, then $\sigma(a_{j+1}) > \sigma(a_{j+1}+1)$.
    \item[{\rm (iv)}] If $\sigma(b_j) > \sigma(a_{j+1})$ and $|I_j| \geq 2$, then $\sigma(b_j - 1) > \sigma(b_j)$.
    \item[{\rm (v)}] If $\size{I_{j}}\geq 3$, we have
    \[\sigma(a_{j})<\cdots <\sigma(b_{j}) \quad \text{or} \quad \sigma(a_{j})>\cdots >\sigma(b_{j}).\]
\end{enumerate}
\end{definition}

We now describe a Gröbner basis for the ideal $J_{C_{n}} + I_{\mathcal{S}}$ under the assumption that there exists a permutation $\sigma$ that is $\mathcal{S}$-consistent.

\begin{proposition}\label{grobner basis for Cn}
Let $\sigma$ be an $\mathcal{S}$-consistent permutation. Using the notation from Definitions~\ref{simplified notation} and~\ref{sigma admissible}, the set
\[
\mathcal{G} = \bigcup_{\sigma(i)<\sigma(j)} \left\{ u_{\pi} f_{i,j} \,\middle|\, \pi \text{ is a $\sigma$-admissible path from } i \text{ to } j \right\} \cup \GS
\]
is a Gröbner basis for the ideal $J_{C_{n}} + I_{\mathcal{S}}$ with respect to the lexicographic order $<_{\sigma}$.
\end{proposition}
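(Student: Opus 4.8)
The plan is to run Buchberger's criterion on $\mathcal{G}$ with respect to $<_{\sigma}$, exactly mirroring the structure of the proof of Proposition~\ref{proposition grobner basis JG+JS}, but now the combinatorial bookkeeping is driven by the cyclic structure of $C_n$ rather than by the minimal cut $\mathcal{S}$. First I would record that, by Proposition~\ref{grobner basis for JG}~(i) applied with the relabeling $\sigma$ (which is a legitimate lexicographic order), the set $\mathcal{G}' := \bigcup_{\sigma(i)<\sigma(j)}\{u_\pi f_{i,j} : \pi \text{ a }\sigma\text{-admissible path}\}$ is a reduced Gröbner basis of $J_{C_n}$, so in particular $\mathcal{G}' \cup \GS$ generates $J_{C_n} + I_{\mathcal{S}}$ by Definition~\ref{simplified notation}. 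Thus it suffices to show every $S$-pair $S(f,h)$ with $f,h\in\mathcal{G}$ reduces to zero modulo $\mathcal{G}$. The pairs split into three groups: (1) $f,h\in\mathcal{G}'$ — these reduce to zero since $\mathcal{G}'$ is already a Gröbner basis of $J_{C_n}$; (2) $f,h\in\GS$; (3) $f\in\GS$, $h\in\mathcal{G}'$.

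For the generators $P\cdot g_{C,D}$ in $\GS$, the key observation is that the leading term of $P\cdot g_{C,D}$ with respect to $<_\sigma$ is $\operatorname{in}_{<_\sigma}(P)\cdot g_{C,D}$, where $\operatorname{in}_{<_\sigma}(P) = \prod_{j=1}^k \operatorname{in}_{<_\sigma}(f_{b_j,a_{j+1}})$, and each factor $\operatorname{in}_{<_\sigma}(f_{b_j,a_{j+1}})$ is $x_{b_j}y_{a_{j+1}}$ or $x_{a_{j+1}}y_{b_j}$ according as $\sigma(b_j)<\sigma(a_{j+1})$ or not. Since every generator of $I_{\mathcal S}$ shares the common factor $P$, any two such generators $P g_{C,D}$ and $P g_{C',D'}$ differ only in the squarefree monomial part, and the $x$'s and $y$'s indexed by $[n]\setminus(\mathcal S\cup F)$ appear in the leading terms in disjoint-variable fashion; one checks (as in Case~2.1/2.2 of Proposition~\ref{proposition grobner basis JG+JS}) that the relevant pairs either have leading terms forming a regular sequence, or coincide, and hence reduce to zero. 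Case~(3), $f = P g_{C,D}$ against $h = u_\pi f_{k,l}$, is the substantive one: the overlap can only occur through a variable $x_{b_j}$, $y_{b_j}$, $x_{a_{j+1}}$, or $y_{a_{j+1}}$ appearing both in $\operatorname{in}_{<_\sigma}(P)$ and in $\operatorname{in}_{<_\sigma}(u_\pi f_{k,l})$. Here I would use the $\mathcal S$-consistency conditions~(i)--(v) of Definition~\ref{condition v} precisely to control which of the two monomials of $f_{b_j,a_{j+1}}$ is the leading term and how it interacts with the neighbouring vertices $b_j-1$ and $a_{j+1}+1$ inside the interval; after computing $S(f,h)$ explicitly (it will be $\operatorname{lcm}/(\cdots)$ times $y$-coordinate times an $f_{u,v}$ for appropriate $u,v$ lying in a common interval $I_j$), one shows the resulting polynomial is divisible by $g_{C'',D''}\cdot f_{u,v}$ where $u,v$ are connected through a subset of $I_j\setminus\{u,v\}$ (they are consecutive in the path around the cycle), hence lies in $J_{C_n}$ by Proposition~\ref{grobner basis for JG}~(ii) — or else is divisible by another element of $\GS$ — and therefore reduces to zero.

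The main obstacle I anticipate is the case analysis in group~(3): unlike the minimal-cut setting where a single edge $f_{i,j}$ with $i\in V_1$, $j\in V_2$ is the only "new" factor, here $P$ contributes $k = |\mathcal S|$ distinct edges $f_{b_j,a_{j+1}}$ simultaneously, so a single admissible path $\pi$ from $\mathcal{G}'$ could in principle overlap with $P$ at several endpoints $b_j$ or $a_{j+1}$ at once. The $\mathcal S$-consistency hypothesis is exactly engineered so that at each such overlap the "inward" neighbour ($b_j-1$ or $a_{j+1}+1$) has the correct $\sigma$-order to guarantee that $b_j$ (resp.\ $a_{j+1}$) is connected to the other path-endpoint through the interior of its own interval, keeping the $S$-polynomial inside $J_{C_n}$; verifying this for all five clauses and for the sub-cases $\sigma(b_j)\lessgtr\sigma(a_{j+1})$ and $|I_j|, |I_{j+1}|$ equal to $1$ or $\geq 2$ is where the bulk of the work lies. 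I would organize it by first treating $|I_j|\geq 2$ and $|I_{j+1}|\geq 2$ using clauses~(i)--(iv), then the short-interval cases $|I_j|=1$ (where $b_j=a_j$ and the overlap forces $u=v$, making $S(f,h)$ trivially manageable), and finally clause~(v) to handle paths $\pi$ that traverse the interior of a long interval $I_j$ with $|I_j|\geq 3$.
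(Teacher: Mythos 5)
Your overall plan — Buchberger's criterion with the three-way split into $(f,h)\in\mathcal{G}'\times\mathcal{G}'$, $\GS\times\GS$, $\GS\times\mathcal{G}'$ — is the approach the paper takes, and your use of the $\mathcal{S}$-consistency clauses to control initial terms in the mixed case is the right idea. However, there is a genuine error in your treatment of group (2), and a gap in your organization of group (3).

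In group (2), you claim that any two $\GS$-generators have leading terms that ``either form a regular sequence, or coincide.'' Neither alternative holds in general. Both $P\,g_{C,D}$ and $P\,g_{C',D'}$ have leading term equal to $\mathrm{in}_{<_\sigma}(P)$ times the respective monomial, so the two leading terms share the common factor $\mathrm{in}_{<_\sigma}(P)$ and are therefore \emph{not} coprime (hence not a regular sequence); and they do not coincide unless $g_{C,D}=g_{C',D'}$. The correct observation — which the paper uses — is that $S(f,h)=0$ identically: if $f=P\,m_1$ and $h=P\,m_2$ with $m_1,m_2$ monomials, then $\mathrm{lcm}(\mathrm{in}(f),\mathrm{in}(h))=\mathrm{in}_{<_\sigma}(P)\cdot\mathrm{lcm}(m_1,m_2)$, and the $S$-polynomial cancels exactly to $\mathrm{lcm}(m_1,m_2)\cdot(P-P)=0$. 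This is a one-line fix, but as written the mechanism you invoke would not close the case.

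In group (3), your picture of the overlap occurring only through $F$-indexed variables and of $\mathcal{S}$-consistency controlling neighbours inside an interval captures the subcases $\pi\cap\mathcal{S}=\emptyset$, but you do not cleanly isolate the subcase $\pi\cap\mathcal{S}\neq\emptyset$, which requires a different argument. When the admissible path $\pi$ passes through some $b_j+1\in\mathcal{S}$, the relevant observation is that $x_{b_j+1}$ or $y_{b_j+1}$ divides $\mathrm{in}_{<_\sigma}(h)$ but neither divides $\mathrm{in}_{<_\sigma}(f)$, so the complementary factor $\mathrm{lcm}(\mathrm{in}(f),\mathrm{in}(h))/\mathrm{in}(f)$ picks up $x_{b_j+1}$ or $y_{b_j+1}$; since $f_{b_j,a_{j+1}}\mid f$ and $b_j,\,b_j+1,\,a_{j+1}$ is a path in $C_n$, Proposition~\ref{grobner basis for JG}~(ii) shows the whole first term of $S(f,h)$ already lies in $J_{C_n}$, hence $S(f,h)\in J_{C_n}$ and reduces to zero against $\mathcal{G}'$. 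Only after disposing of this case does the remainder split, as you correctly anticipate, into the coprime-initial-term (regular sequence) argument when $\{i,l\}\cap F=\emptyset$ and the $\mathcal{S}$-consistency bookkeeping when $\{i,l\}\cap F\neq\emptyset$, with clause~(v) forcing $\{i,l\}\in E(C_n)$ and clauses~(i)--(iv) giving coprimality of $\mathrm{in}_{<_\sigma}(f_{i,l})$ with $\mathrm{in}_{<_\sigma}(P)$. So your group (3) is on the right track but needs this leading case split made explicit before the interval-interior analysis can begin.
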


\begin{proof}
Set 
\[
\mathcal{G}' = \bigcup_{\sigma(i) < \sigma(j)} \left\{ u_{\pi} f_{i,j} \,\middle|\, \pi \text{ is a $\sigma$-admissible path from } i \text{ to } j \right\}.
\]
By Proposition~\ref{grobner basis for JG}~(i), the union $\mathcal{G}' \cup \GS$ generates the ideal $J_{C_{n}} + I_{\mathcal{S}}$. To show that $\mathcal{G}$ is a Gröbner basis, we use Buchberger’s criterion and verify that every $S$-pair $S(f,h)$ with $f,h \in \mathcal{G}$ reduces to zero. We proceed by analyzing the different possible cases for the pair $(f,h)$.

\smallskip
\noindent\textbf{Case~1:} $f,h \in \mathcal{G}'$. Under the relabeling $(1,\ldots,n) \mapsto (\sigma(1),\ldots,\sigma(n))$, Proposition~\ref{grobner basis for JG}~(i) implies that $\mathcal{G}'$ is a Gröbner basis for $J_{C_{n}}$. Consequently, the $S$-polynomial $S(f,h)$ reduces to zero.

\smallskip
\noindent\textbf{Case~2:} $f,h \in \GS$. In this situation, one has $S(f,h)=0$.

\smallskip
\noindent\textbf{Case~3:} $f\in \GS$ and $h\in \mathcal{G}'$. Let $f=P\cdot g_{C,D}$ and $h=u_{\pi}f_{i,l}$ where $\pi$ is a $\sigma$-admissible path from $i$ to $l$. We distinguish two subcases:

\smallskip  
\noindent\textbf{Case~3.1:} $\pi \cap \mathcal{S} \neq \emptyset$. In this case, there exists an index $j$ such that $b_j + 1 \in \pi \cap \mathcal{S}$. The corresponding $S$-polynomial is given by  
\begin{equation} \label{equation S expression}
S(f,h) = \frac{\mathrm{lcm}(\mathrm{in}_{<_{\sigma}}(f),\mathrm{in}_{<_{\sigma}}(h))}{\mathrm{in}_{<_{\sigma}}(f)} \cdot f - \frac{\mathrm{lcm}(\mathrm{in}_{<_{\sigma}}(f),\mathrm{in}_{<_{\sigma}}(h))}{\mathrm{in}_{<_{\sigma}}(h)} \cdot h.
\end{equation}  
Since $b_j + 1 \in \pi$, it follows that either $x_{b_j+1} \mid \mathrm{in}_{<_{\sigma}}(h)$ or $y_{b_j+1} \mid \mathrm{in}_{<_{\sigma}}(h)$. Moreover, because $b_j + 1 \in \mathcal{S}$, we know that neither $x_{b_j+1}$ nor $y_{b_j+1}$ divides $\mathrm{in}_{<_{\sigma}}(f)$. Consequently,  
\[
x_{b_j+1} \mid \frac{\mathrm{lcm}(\mathrm{in}_{<_{\sigma}}(f),\mathrm{in}_{<_{\sigma}}(h))}{\mathrm{in}_{<_{\sigma}}(f)} \quad \text{or} \quad y_{b_j+1} \mid \frac{\mathrm{lcm}(\mathrm{in}_{<_{\sigma}}(f),\mathrm{in}_{<_{\sigma}}(h))}{\mathrm{in}_{<_{\sigma}}(f)}.
\]  
Since $f_{b_j, a_{j+1}} \mid f$, we obtain  
\begin{equation} \label{in Jg}
f_{b_j, a_{j+1}} x_{b_j+1} \mid \frac{\mathrm{lcm}(\mathrm{in}_{<_{\sigma}}(f),\mathrm{in}_{<_{\sigma}}(h))}{\mathrm{in}_{<_{\sigma}}(f)} \cdot f \quad \text{or} \quad f_{b_j, a_{j+1}} y_{b_j+1} \mid \frac{\mathrm{lcm}(\mathrm{in}_{<_{\sigma}}(f),\mathrm{in}_{<_{\sigma}}(h))}{\mathrm{in}_{<_{\sigma}}(f)} \cdot f.
\end{equation}  
Since $b_j, b_j+1, a_{j+1}$ form a path in $C_n$, it follows from Proposition~\ref{grobner basis for JG}~(ii) that both $f_{b_j, a_{j+1}} x_{b_j+1}$ and $f_{b_j, a_{j+1}} y_{b_j+1}$ belong to $J_{C_n}$. Thus, by \eqref{in Jg}, we conclude that  
\[
\frac{\mathrm{lcm}(\mathrm{in}_{<_{\sigma}}(f),\mathrm{in}_{<_{\sigma}}(h))}{\mathrm{in}_{<_{\sigma}}(f)} \cdot f \in J_{C_n}.
\]  
Moreover, since $h \in J_{C_n}$, it follows from \eqref{equation S expression} that $S(f,h) \in J_{C_n}$. Finally, since $\mathcal{G}'$ is a Gröbner basis for $J_{C_{n}}$ by Proposition~\ref{grobner basis for JG}~(i), we conclude that $S(f,h)$ reduces to zero.

\smallskip  
\noindent\textbf{Case~3.2:} $\pi \cap \mathcal{S}= \emptyset$. We distinguish two subcases:

\smallskip  
\noindent\textbf{Case~3.2.1:} $\{i, l\} \cap F = \emptyset$. In this case, we have $\mathrm{in}_{<_{\sigma}}(f_{i,l}) = x_i y_l$, while the support of $\mathrm{in}_{<_{\sigma}}(P)$ is contained in $F$. It follows that $\mathrm{in}_{<_{\sigma}}(f_{i,l})$ and $\mathrm{in}_{<_{\sigma}}(P)$ form a regular sequence. Therefore, by reasoning as in Case~2.2 of the proof of Proposition~\ref{proposition grobner basis JG+JS}, we conclude that $S(f,h)$ reduces to zero.

\smallskip  
\noindent\textbf{Case~3.2.2:} $\{i, l\} \cap F \neq \emptyset$. Suppose first that $i$ and $l$ belong to distinct intervals $I_{j_1}$ and $I_{j_2}$. Then any path $\pi$ connecting them must traverse an element of $\mathcal{S}$, contradicting the assumption that $\pi \cap \mathcal{S} = \emptyset$. Thus, there exists an index $j$ such that $\{i, l\} \subset I_j$, and the path $\pi$ lies entirely within $I_j$.

Since $\sigma$ is $\mathcal{S}$-consistent, Condition~(v) from Definition~\ref{condition v} ensures that
\[
\sigma(i) < \sigma(x) < \sigma(l)
\quad \text{for all } x \in \pi \setminus \{i, l\}.
\]
This contradicts the assumption that $\pi$ is $\sigma$-admissible unless $\pi = \{i, l\}$. Hence, $\{i, l\} \in E(C_n)$.

\smallskip  
\noindent\textbf{Case~3.2.2.1:} $|I_j| = 2$, that is, $b_j = a_j + 1$. In this case, we must have $(i, l) = (a_j, b_j)$ or $(i, l) = (b_j, a_j)$. As the argument is symmetric, we may assume without loss of generality that $i = a_j$ and $l = b_j$.

Since $\sigma(a_j)=\sigma(b_{j}-1) < \sigma(b_j)=\sigma(a_{j}+1)$ and $\sigma$ is $\mathcal{S}$-consistent, Conditions~(iii) and (iv) from Definition~\ref{condition v} imply
\[
\sigma(b_{j-1}) < \sigma(a_j) 
\quad \text{and} \quad 
\sigma(b_j) < \sigma(a_{j+1}).
\]
It follows that
\[
\mathrm{in}_{<_{\sigma}}(f_{i,l}) = x_{a_j} y_{b_j}, 
\quad 
\mathrm{in}_{<_{\sigma}}(f_{b_{j-1}, a_j}) = x_{b_{j-1}} y_{a_j}, 
\quad \text{and} \quad 
\mathrm{in}_{<_{\sigma}}(f_{b_j, a_{j+1}}) = x_{b_j} y_{a_{j+1}}.
\]
Thus, $\mathrm{in}_{<_{\sigma}}(f_{i,l})$ and $\mathrm{in}_{<_{\sigma}}(f_{b_{j-1}, a_j} \cdot f_{b_j, a_{j+1}})$ are coprime, so $\mathrm{in}_{<_{\sigma}}(f_{i,l})$ and $\mathrm{in}_{<_{\sigma}}(P)$ form a regular sequence. Consequently, by the same reasoning as in Case~2.2 of the proof of Proposition~\ref{proposition grobner basis JG+JS}, we conclude that $S(f, h)$ reduces to zero.

\smallskip  
\noindent\textbf{Case~3.2.2.2:} $|I_j| \geq 3$. Since $\{i, l\} \cap F \neq \emptyset$, the pair $(i, l)$ must be one of $(a_j, a_j + 1)$, $(a_{j}+1, a_j)$, $(b_j - 1, b_j)$, or $(b_j, b_j - 1)$. By symmetry, it suffices to consider the case $i = a_j$ and $l = a_j + 1$.

Given that $\sigma(a_j) < \sigma(a_j + 1)$ and $\sigma$ is $\mathcal{S}$-consistent, Condition~(i) from Definition~\ref{condition v} implies
\[
\sigma(b_{j-1}) < \sigma(a_j).
\]
Hence,
\[
\mathrm{in}_{<_{\sigma}}(f_{i,l}) = x_{a_j} y_{a_j + 1} 
\quad \text{and} \quad 
\mathrm{in}_{<_{\sigma}}(f_{b_{j-1}, a_j}) = x_{b_{j-1}} y_{a_j}.
\]
These two initial terms are coprime, and thus $\mathrm{in}_{<_{\sigma}}(f_{i,l})$ and $\mathrm{in}_{<_{\sigma}}(P)$ form a regular sequence. Therefore, by the same argument as in Case~2.2 of the proof of Proposition~\ref{proposition grobner basis JG+JS}, it follows that $S(f, h)$ reduces to zero. This completes the proof.
\end{proof}

\begin{comment}
We now turn to the proof of Theorem~\ref{localized for cycles} under the assumption that an $\mathcal{S}$-consistent permutation exists. 

\begin{proposition}\label{localized for cycles 2}
Let $\mathcal{S}$ be a subset of $[n]$ admitting an $\mathcal{S}$-consistent permutation. Then, the following equality holds:
\[
\VCN = n - |\mathcal{S}| + \left| \left\{ j \in [k] : |I_j| = 1 \right\} \right|.
\]
\end{proposition}

\begin{proof}
By Proposition~\ref{grobner basis for Cn}, the ideal $J_{C_n} + \JS$ admits a squarefree Gröbner basis with respect to the monomial order $<_{\sigma}$. In particular, $J_{C_n} + \JS$ is radical. Hence, Corollary~\ref{corollary radical} yields
\[
\VCN = \min \left\{\, |A_1| + 2|A_2| : A \in \mathcal{T}(\mathcal{S}) \,\right\}.
\]
Furthermore, Lemma~\ref{charact TS} characterizes the elements $A \in \mathcal{T}(\mathcal{S})$ as those with $\supp(A) = [n] \setminus \mathcal{S}$ such that, for each $j \in [k]$, there exist vertices $u \in I_j$ and $w \in I_{j+1}$ satisfying $\{u, w\} \in A$. It follows that
\[
\VCN = n - |\mathcal{S}| + \left| \left\{ j \in [k] : |I_j| = 1 \right\} \right|.
\]
\end{proof}
\end{comment}

We now present a sufficient condition on $\mathcal{S}$ that ensures the existence of an $\mathcal{S}$-consistent permutation. Recall the notions of $C_{1}(\mathcal{S})$ and $C_{2}(\mathcal{S})$ from Notation~\ref{notation c1 and c2}.

\begin{lemma}\label{two of size one}
If $\size{C_{1}(\mathcal{S})}\geq 2$, then there exists an $\mathcal{S}$-consistent permutation of $[n]$.
\end{lemma}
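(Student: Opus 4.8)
The goal is to produce a permutation $\sigma$ of $[n]$ that satisfies all five conditions of Definition~\ref{condition v} simultaneously. The key structural fact to exploit is that $|C_1(\mathcal{S})| \geq 2$: there are at least two singleton intervals among $I_1,\ldots,I_k$, and crucially these singleton intervals impose \emph{no} constraints of type (i)--(iv) when they occur as $I_j$ or $I_{j+1}$ (those conditions are all guarded by a hypothesis $|I_{j}| \geq 2$ or $|I_{j+1}| \geq 2$), and no constraint of type (v) either. So the plan is to use the two singleton intervals as ``pivots'' that let us choose the orientations of the long intervals freely, breaking the cyclic chain of dependencies into a linear one.

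Concretely, I would proceed as follows. First, label the singleton intervals; pick two of them, say $I_{p}$ and $I_{q}$ with $p < q$, which split the cyclic sequence of intervals $I_1,\ldots,I_k$ (cyclically ordered) into two arcs. For each interval $I_j$ with $|I_j|\geq 2$, I must decide an orientation: either the increasing orientation $\sigma(a_j) < \cdots < \sigma(b_j)$ or the decreasing one, so that (v) holds automatically for the long ones, and (for intervals of size exactly $2$) the two-element order is also fixed. The conditions (i)--(iv) are ``boundary matching'' conditions: for each $j\in[k]$, they relate the $\sigma$-order of the \emph{endpoints} $b_j$ and $a_{j+1}$ of consecutive intervals to the orientation of whichever of $I_j$, $I_{j+1}$ is long. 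The idea is to process the intervals in order along each of the two arcs between $I_p$ and $I_q$: once the orientation of $I_j$ is fixed and the relative $\sigma$-position of $b_j$ versus $a_{j+1}$ is decided, conditions (i)/(ii) (if $\sigma(b_j)<\sigma(a_{j+1})$) or (iii)/(iv) (if $\sigma(b_j)>\sigma(a_{j+1})$) \emph{force} the orientation of $I_{j+1}$ (they dictate whether $\sigma(a_{j+1}) < \sigma(a_{j+1}+1)$ or the reverse, i.e.\ the direction at the left end of $I_{j+1}$, which combined with (v) fixes the whole orientation of $I_{j+1}$). Propagating this from $I_p$ outward along each arc, the orientations are determined recursively; because the arcs \emph{end} at the singleton $I_q$, which carries no type-(i)--(iv) constraint on its own endpoints in the guarded direction, there is no consistency condition to close up around the cycle — the potential obstruction of a ``monodromy'' mismatch is killed precisely by the singleton pivots.

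Having fixed all orientations, I would then construct $\sigma$ explicitly: concatenate the intervals in their cyclic order starting just after $I_p$, writing each long interval's vertices in the chosen orientation and inserting the singletons, and assign $\sigma$-values $1,2,\ldots,n$ along this concatenation (possibly with a global choice of which arc comes first). One then checks that with this assignment every local comparison demanded by (i)--(v) holds — this is a routine but slightly tedious case check, comparing, for each $j$, the $\sigma$-values of $b_j$, $a_{j+1}$, $a_{j+1}+1$, $b_j-1$, $a_j$, $b_j$ against the linear layout. \textbf{The main obstacle} I anticipate is the bookkeeping in the propagation step: making sure that the forced orientation of $I_{j+1}$ coming from the constraints at the $(j,j+1)$ boundary is always \emph{consistent} with what the layout will actually produce, and verifying that when we reach a singleton interval the chain of forced choices genuinely terminates without a contradiction. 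I expect the cleanest way to handle this is to define $\sigma$ first by an explicit recipe (lay out the intervals linearly in cyclic order between the two chosen singletons, each long interval oriented as dictated by the recursion started at $I_p$), and then verify (i)--(v) directly from the recipe, rather than trying to argue abstractly that a consistent choice exists; the singleton hypothesis $|C_1(\mathcal{S})|\geq 2$ enters exactly at the one point where the linear layout would otherwise need to ``wrap around'' and match its own start.
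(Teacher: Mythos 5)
Your structural insight is correct: the singleton intervals are exactly where conditions (i)--(iv) (and (v)) are vacuous, so they serve as ``free'' pivots that break the cyclic chain of forced orientations. That is indeed what the hypothesis $|C_1(\mathcal{S})|\geq 2$ buys you, and the paper exploits the same fact. However, the explicit recipe you propose — ``lay out the intervals linearly in cyclic order starting just after $I_p$, each long interval oriented as dictated by the recursion, and assign $\sigma$-values $1,2,\ldots,n$ along this concatenation'' — does not produce an $\mathcal{S}$-consistent permutation, and the gap is exactly at the point you flagged as the main obstacle.

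Concretely: in any layout where $\sigma$ increases linearly as one traverses the intervals in their cyclic order around the circle, the consecutive-interval comparison $\sigma(b_j)<\sigma(a_{j+1})$ holds at every boundary except one (the ``wraparound''), and by condition~(i) this forces every long interval to be oriented increasingly. But then at the wraparound boundary — say between $I_{p-1}$ and $I_p=\{s\}$ if $\sigma(s)$ is smallest, or between $I_p=\{s\}$ and $I_{p+1}$ if $\sigma(s)$ is largest — one of conditions (iii) or (iv) fires (these conditions are guarded by $|I_{p-1}|\geq 2$ or $|I_{p+1}|\geq 2$, not by $|I_p|\geq 2$, so the singleton $I_p$ does \emph{not} make them vacuous) and demands a decreasing orientation, contradicting the orientation already forced by the linear layout. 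Placing the second singleton $I_q$ in the middle of the layout does not help: at the boundary $I_q/I_{q+1}$ you again have $\sigma(b_q)<\sigma(a_{q+1})$ by the linear order, so (i) forces $I_{q+1}$ to increase, and the contradiction reappears at the wraparound.

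What actually works is the ``valley'' layout in the paper: take two singletons $I_{j_1}=\{r\}$ and $I_{j_2}=\{s\}$, set $\sigma(s)=1$ and $\sigma(r)=n$, and let $\sigma$ increase monotonically along \emph{both} arcs of $C_n$ from $s$ to $r$, i.e.\ $\sigma(s)<\sigma(s+1)<\cdots<\sigma(r)$ and $\sigma(s)<\sigma(s-1)<\cdots<\sigma(r)$. Then the intervals on the forward arc are oriented increasingly and those on the backward arc decreasingly; at internal boundaries the relevant pair of conditions ((i)--(ii) on the forward arc, (iii)--(iv) on the backward arc) is satisfied by monotonicity, condition (v) holds on each arc, and at the two singleton boundaries the only conditions that could conflict are those guarded by $|I_{j_1}|\geq 2$ or $|I_{j_2}|\geq 2$, which are vacuous. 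So the conclusion of your plan is right, but the concrete construction must be the two-arc valley, not a single linear concatenation.
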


\begin{proof}
Let $j_1$ and $j_2$ be distinct indices such that $|I_{j_1}| = |I_{j_2}| = 1$, and let $I_{j_1} = \{r\}$ and $I_{j_2} = \{s\}$. Define a permutation $\sigma$ of $[n]$ such that:
\begin{itemize}
\item[{\rm (i)}] $\sigma(r) = n$ and $\sigma(s) = 1$,
\item[{\rm (ii)}] $\sigma(s) < \sigma(s+1) < \cdots < \sigma(r-1) < \sigma(r)$ and $\sigma(s) < \sigma(s-1) < \cdots < \sigma(r+1) < \sigma(r)$.
\end{itemize}
It is straightforward to verify that $\sigma$ satisfies conditions~(i)--(v) from Definition~\ref{condition v}.
\end{proof}

We now address Question~\ref{localized for cycles} under the assumption that the complement of $\mathcal{S}$ contains at least two intervals of size one. One key tool is the next proposition from \cite{dey2024v}. Recall that we denote $S=\CC[x_{1},\ldots,x_{n},y_{1},\ldots,y_{n}]$ and $<$ denotes the monomial order from Definition~\ref{lexicographical order}.

\begin{proposition}\textup{\cite[Proposition~3.3]{dey2024v}}\label{proposition dey}
Let $G$ be a graph and let $f \in S_{m}$ be a homogeneous polynomial of degree $m$ such that $(J_{G} : f) = P_{\mathcal{S}}$ for some minimal prime $P_{\mathcal{S}}$ of $J_{G}$. Then, there exists an element $g \in S_{m}$ such that $\mathrm{in}_{<}(g) \notin \mathrm{in}_{<}(J_{G})$ and $(J_{G} : g) = P_{\mathcal{S}}$.
\end{proposition}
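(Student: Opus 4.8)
The plan is to take $g$ to be the remainder of $f$ under division by a Gröbner basis of $J_G$ with respect to the order $<$, and then to verify the three required properties in turn: that $g$ is homogeneous of degree $m$, that $(J_G:g)=P_{\mathcal{S}}$, and that $\mathrm{in}_<(g)\notin\mathrm{in}_<(J_G)$. Concretely, I would fix the reduced Gröbner basis $\mathcal{G}$ of $J_G$ for the lexicographic order from Proposition~\ref{grobner basis for JG}~(i) (any Gröbner basis would do), and run the division algorithm on $f$ against $\mathcal{G}$, obtaining $g$ with $f-g\in J_G$ and with the property that no monomial appearing in $g$ is divisible by any $\mathrm{in}_<(h)$, $h\in\mathcal{G}$.

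\textbf{Step 1 (homogeneity).} Since $J_G$ is generated by the quadrics $f_{i,j}$, the basis $\mathcal{G}$ is homogeneous, so each reduction step $p\mapsto p-c\,u\,h$ — with $h\in\mathcal{G}$, $c\in\CC$, $u$ a monomial, and $u\,\mathrm{in}_<(h)$ a monomial of $p$ — preserves both homogeneity and total degree; as $f\in S_m$, this forces $g\in S_m$ (or $g=0$). \textbf{Step 2 (colon ideal).} Because $f-g\in J_G$ and, for any ideal $I$ and any $h\in I$, one has $(I:p)=(I:p+h)$ (immediate from $a(p+h)\in I\iff ap\in I$), we obtain $(J_G:g)=(J_G:f)=P_{\mathcal{S}}$. \textbf{Step 3 (initial term).} First note $g\neq 0$: if $g=0$ then $f\in J_G$, hence $(J_G:f)=S$, contradicting that $P_{\mathcal{S}}$ is a proper prime ideal. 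Then, since the initial monomials of $\mathcal{G}$ generate $\mathrm{in}_<(J_G)$ and no monomial of the remainder $g$ is divisible by any of them, no monomial of $g$ — in particular $\mathrm{in}_<(g)$ — lies in $\mathrm{in}_<(J_G)$.

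The argument is essentially formal, so there is no serious obstacle; the only point that genuinely uses the hypothesis is the non-vanishing of the remainder in Step~3, where $(J_G:f)=P_{\mathcal{S}}$ is needed precisely to ensure $f\notin J_G$ (for a general $f$ with $\mathrm{in}_<(f)\in\mathrm{in}_<(J_G)$, the remainder could well be $0$). Everything else is the standard interaction between division with remainder, homogeneity of $J_G$, and colon ideals.
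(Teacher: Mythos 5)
Your proof is correct, and it is also the standard (and essentially unique) way to prove this: take $g$ to be the normal form of $f$ modulo a Gröbner basis of $J_G$, observe that $g$ is homogeneous of degree $m$ since $J_G$ is homogeneous, that $(J_G:g)=(J_G:f)$ since $f-g\in J_G$, and that $g\neq 0$ since otherwise $(J_G:f)=S$ would not be a proper prime. Note, however, that the paper does not prove this proposition itself — it cites it directly from \cite[Proposition~3.3]{dey2024v} — so there is no in-paper argument to compare against; that said, the cited result is proved there by exactly this normal-form reduction, and your reconstruction has no gaps.
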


The next proposition provides effective upper and lower bounds for the invariant $\VCN$ when $\size{C_{1}(\mathcal{S})} \geq 2$.

\begin{proposition}\label{bound interval}
Suppose $\size{C_{1}(\mathcal{S})}\geq 2$. Then
\begin{equation}\label{inequality}
n-\size{C_{2}(\mathcal{S})}-2\leq \VCN \leq n - \left|C_{2}(\mathcal{S})\right|.
\end{equation}
\end{proposition}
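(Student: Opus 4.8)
The plan is to prove the two inequalities by separate arguments: the upper bound comes straight from the transversal formalism of Section~\ref{section transversals}, while the lower bound rests on the Gröbner basis of Proposition~\ref{grobner basis for Cn} together with Proposition~\ref{proposition dey}. For the upper bound $\VCN\le n-|C_{2}(\mathcal{S})|$, I would exhibit a single element $A\in\mathcal{T}(\mathcal{S})$ with $|A_{1}|+2|A_{2}|=n-|C_{2}(\mathcal{S})|$ and invoke Theorem~\ref{corollary radical}. The natural choice is to let $A$ consist of one ``bridge'' pair $\{u_{j},w_{j}\}$ with $u_{j}\in I_{j}$ and $w_{j}\in I_{j+1}$ for each $j\in[k]$ --- chosen so that inside an interval $I_{j}=[a_{j},b_{j}]$ of size at least two the two incident bridges use the two distinct endpoints $a_{j}$ and $b_{j}$, while inside a singleton $I_{j}=\{t\}$ both incident bridges use $t$ --- together with the singletons $\{x\}$ for $x$ ranging over the interior vertices $I_{j}\setminus\{a_{j},b_{j}\}$ of the intervals of size at least two. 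One checks at once from Lemma~\ref{charact TS} that $A\in\mathcal{T}(\mathcal{S})$ (indeed $\supp(A)=[n]\setminus\mathcal{S}$ and each consecutive pair of intervals is bridged), that $|A_{2}|=k$ and $|A_{1}|=\sum_{j\in C_{2}(\mathcal{S})}(|I_{j}|-2)$, and then the identity $|A_{1}|+2|A_{2}|=n-|C_{2}(\mathcal{S})|$ drops out of $\sum_{j}|I_{j}|=n-|\mathcal{S}|$ and $|\mathcal{S}|=|C_{1}(\mathcal{S})|+|C_{2}(\mathcal{S})|$.

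For the lower bound I would first apply Lemma~\ref{two of size one}, which (since $|C_{1}(\mathcal{S})|\ge 2$) produces an $\mathcal{S}$-consistent permutation $\sigma$; I would take precisely the $\sigma$ built in its proof, with $\sigma(r)=n$ and $\sigma(s)=1$ at two singleton intervals $\{r\},\{s\}$ and $\sigma$ increasing along each of the two arcs of $C_{n}$ joining $s$ to $r$. Proposition~\ref{grobner basis for Cn} then identifies $\textup{in}_{<_{\sigma}}(J_{C_{n}}+I_{\mathcal{S}})$: it is generated by the squarefree monomials $\textup{in}_{<_{\sigma}}(u_{\pi}f_{i,j})$, which generate the squarefree ideal $\textup{in}_{<_{\sigma}}(J_{C_{n}})$, together with the monomials $\textup{in}_{<_{\sigma}}(P)\cdot g_{C,D}$ (note $\textup{in}_{<_{\sigma}}(P\cdot g_{C,D})=\textup{in}_{<_{\sigma}}(P)\,g_{C,D}$). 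Since the radical of a monomial ideal is obtained by replacing each generator by its squarefree part, and since $\textup{in}_{<_{\sigma}}(P)$ and $g_{C,D}$ have disjoint variable supports (on the index sets $F$ and $[n]\setminus(\mathcal{S}\cup F)$ respectively), the radical $\sqrt{\textup{in}_{<_{\sigma}}(J_{C_{n}}+I_{\mathcal{S}})}$ is generated by $\textup{in}_{<_{\sigma}}(J_{C_{n}})$ together with the monomials $\sqrt{\textup{in}_{<_{\sigma}}(P)}\cdot g_{C,D}$, where $\sqrt{\textup{in}_{<_{\sigma}}(P)}$ denotes the squarefree part.

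The heart of the argument is the degree of $\sqrt{\textup{in}_{<_{\sigma}}(P)}$. Writing $\textup{in}_{<_{\sigma}}(P)=\prod_{j=1}^{k}m_{j}$ with $m_{j}=\textup{in}_{<_{\sigma}}(f_{b_{j},a_{j+1}})\in\{x_{b_{j}}y_{a_{j+1}},\,x_{a_{j+1}}y_{b_{j}}\}$, so that $\deg\textup{in}_{<_{\sigma}}(P)=2|\mathcal{S}|$ and every variable occurring has index in $F$, I would observe that for $j\in C_{2}(\mathcal{S})$ the distinct endpoints $a_{j}$ and $b_{j}$ each occur in only one factor, while for a singleton $I_{j}=\{t\}$ (so that $b_{j-1}=t-2$ and $a_{j+1}=t+2$) the index $t$ occurs once in $m_{j-1}$ and once in $m_{j}$, and these two occurrences are the \emph{same} variable precisely when $\sigma(t)$ does not lie strictly between $\sigma(t-2)$ and $\sigma(t+2)$. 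For the chosen $\sigma$, every singleton interval $\{t\}$ with $t\notin\{r,s\}$ has all five consecutive vertices $t-2,\dots,t+2$ lying on a common arc, so monotonicity of $\sigma$ along that arc puts $\sigma(t)$ strictly between $\sigma(t-2)$ and $\sigma(t+2)$ and no coincidence occurs, whereas for $t\in\{r,s\}$ the extremal value $\sigma(t)\in\{1,n\}$ does force a coincidence. Hence exactly two variables occur with multiplicity two in $\textup{in}_{<_{\sigma}}(P)$ and all others with multiplicity one, so $\deg\sqrt{\textup{in}_{<_{\sigma}}(P)}=2|\mathcal{S}|-2$; consequently each new generator has degree $(2|\mathcal{S}|-2)+|[n]\setminus(\mathcal{S}\cup F)|=n-|C_{2}(\mathcal{S})|-2$, using $|F|=|C_{1}(\mathcal{S})|+2|C_{2}(\mathcal{S})|$ and $|\mathcal{S}|=|C_{1}(\mathcal{S})|+|C_{2}(\mathcal{S})|$.

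To finish, I would take a homogeneous $f\in S_{m}$ with $m=\VCN$ and $(J_{C_{n}}:f)=P_{\mathcal{S}}$; by Proposition~\ref{proposition dey} applied with respect to $<_{\sigma}$ (equivalently, to the graph obtained from $C_{n}$ by relabelling through $\sigma$) there is a homogeneous $g\in S_{m}$ with $(J_{C_{n}}:g)=P_{\mathcal{S}}$ and $\textup{in}_{<_{\sigma}}(g)\notin\textup{in}_{<_{\sigma}}(J_{C_{n}})$. Since $g\in(J_{C_{n}}:P_{\mathcal{S}})=\sqrt{J_{C_{n}}+\JS}=\sqrt{J_{C_{n}}+I_{\mathcal{S}}}$ by Theorem~\ref{theorem transversal} and Lemma~\ref{concise generating set 2}, some power of $\textup{in}_{<_{\sigma}}(g)$ lies in $\textup{in}_{<_{\sigma}}(J_{C_{n}}+I_{\mathcal{S}})$, so $\textup{in}_{<_{\sigma}}(g)\in\sqrt{\textup{in}_{<_{\sigma}}(J_{C_{n}}+I_{\mathcal{S}})}$; as $\textup{in}_{<_{\sigma}}(g)\notin\textup{in}_{<_{\sigma}}(J_{C_{n}})$, it must then be divisible by one of the monomials $\sqrt{\textup{in}_{<_{\sigma}}(P)}\cdot g_{C,D}$, whence $m=\deg g=\deg\textup{in}_{<_{\sigma}}(g)\ge n-|C_{2}(\mathcal{S})|-2$. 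I expect the main obstacle to be the bookkeeping in the third paragraph --- in particular, verifying that the $\mathcal{S}$-consistent permutation furnished by Lemma~\ref{two of size one} really produces exactly two variable coincidences in $\textup{in}_{<_{\sigma}}(P)$, neither more nor fewer --- since the remaining steps are either direct citations or routine monomial-ideal manipulations.
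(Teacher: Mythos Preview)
Your proposal is correct, and for the lower bound it follows the paper's argument essentially step for step: both use the explicit $\mathcal{S}$-consistent permutation from Lemma~\ref{two of size one}, identify $\textup{in}_{<_\sigma}(J_{C_n}+I_{\mathcal{S}})$ via Proposition~\ref{grobner basis for Cn}, pass to the radical of this monomial ideal, invoke Proposition~\ref{proposition dey} together with the chain $(J_{C_n}:P_{\mathcal{S}})=\sqrt{J_{C_n}+\JS}=\sqrt{J_{C_n}+I_{\mathcal{S}}}$, and read off the degree bound. Your ``coincidence'' analysis of $\textup{in}_{<_\sigma}(P)$ is the same computation the paper does explicitly --- the paper writes out $\textup{in}_{<_\sigma}(P)$ and observes the factors $y_r^2$ and $x_s^2$, whereas you argue abstractly that a repeated variable at a singleton $\{t\}$ occurs precisely when $\sigma(t)$ is extremal among $\sigma(t-2),\sigma(t),\sigma(t+2)$; both yield $\deg\sqrt{\textup{in}_{<_\sigma}(P)}=2|\mathcal{S}|-2$.

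The one genuine difference is the upper bound. The paper simply cites \cite[Theorem~4.7]{dey2024v}, while you construct an explicit transversal $A\in\mathcal{T}(\mathcal{S})$ with $|A_1|+2|A_2|=n-|C_2(\mathcal{S})|$ and apply Theorem~\ref{corollary radical}. Your route is self-contained within the paper's framework and illustrates the transversal method; the paper's citation is shorter but imports an external result.
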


\begin{proof}
The right-hand inequality in~\eqref{inequality} follows from~\textup{\cite[Theorem~4.7]{dey2024v}}. We now establish the left-hand inequality.

Let $j_1$ and $j_2$ be distinct indices such that $|I_{j_1}| = |I_{j_2}| = 1$, and let $I_{j_1} = \{r\}$ and $I_{j_2} = \{s\}$. Let $\sigma$ be the permutation constructed in the proof of Lemma~\ref{two of size one}. By that lemma, $\sigma$ is $\mathcal{S}$-consistent. Then, by Proposition~\ref{grobner basis for Cn}, a Gröbner basis for the ideal $J_{C_n} + I_{\mathcal{S}}$ with respect to the lexicographic order $<_{\sigma}$ is given by
\[
\bigcup_{\sigma(i) < \sigma(j)} \left\{ u_\pi f_{i,j} \,\middle|\, \pi \text{ is a $\sigma$-admissible path from } i \text{ to } j \right\} \cup \GS.
\]
It follows that
\begin{equation}\label{initial ideal}
\mathrm{in}_{<_{\sigma}}(J_{C_n} + I_{\mathcal{S}}) = \mathrm{in}_{<_{\sigma}}(J_{C_n}) + \left(\mathrm{in}_{<_{\sigma}}(P)\cdot g_{C,D} \,\middle|\, C \amalg D = [n] \setminus (\mathcal{S} \cup F)\right).
\end{equation}

Moreover, we compute:
\begin{equation}\label{in P}
\begin{aligned}
\mathrm{in}_{<_{\sigma}}(P)
&= \prod_{j} \mathrm{in}_{<_{\sigma}}(f_{b_j,a_{j+1}}) \\
&= y_r^2 x_s^2 \prod_{j \in [j_{1}+1, j_{2}-2]} \mathrm{in}_{<_{\sigma}}(f_{b_j,a_{j+1}}) \prod_{j \in [j_{2}+1, j_{1}-2]} \mathrm{in}_{<_{\sigma}}(f_{b_j,a_{j+1}}) \\
&= y_r^2 x_s^2 \prod_{j \in [j_{1}+1, j_{2}-2]} x_{a_{j+1}} y_{b_j} \prod_{j \in [j_{2}+1, j_{1}-2]} x_{b_j} y_{a_{j+1}}.
\end{aligned}
\end{equation}

Combining~\eqref{initial ideal} and~\eqref{in P}, we obtain:
\begin{equation}\label{radical initial}
\sqrt{\mathrm{in}_{<_{\sigma}}(J_{C_n} + I_{\mathcal{S}})} = \mathrm{in}_{<_{\sigma}}(J_{C_n}) + \left(U \cdot g_{C,D} \,\middle|\, C \amalg D = [n] \setminus (\mathcal{S} \cup F)\right),
\end{equation}
where
\[
U = y_r x_s \prod_{j \in [j_{1}+1, j_{2}-2]} x_{a_{j+1}} y_{b_j} \prod_{j \in [j_{2}+1, j_{1}-2]} x_{b_j} y_{a_{j+1}}.
\]

Moreover, note that $\deg(U)=\deg(P)-2=2\size{\mathcal{S}}-2$. Assume that $(J_{C_n} : f) = P_{\mathcal{S}}$ for some homogeneous polynomial $f$ of degree $d$. We aim to show that $d \geq n - \left|C_2(\mathcal{S})\right|-2$.

By Proposition~\ref{proposition dey}, we may assume that $\mathrm{in}_{<_{\sigma}}(f) \notin \mathrm{in}_{<_{\sigma}}(J_{C_n})$. Also note that
\[(J_{C_n} : P_{\mathcal{S}})=\sqrt{J_{C_{n}}+\JS}=\sqrt{J_{C_{n}}+I_{\mathcal{S}}},\]
where the first equality follows from Theorem~\ref{theorem transversal} and the second equality follows from Lemma~\ref{concise generating set 2}. Since $f \in (J_{C_n} : P_{\mathcal{S}})$, we have
\[
\mathrm{in}_{<_{\sigma}}(f) \in \mathrm{in}_{<_{\sigma}}(\sqrt{J_{C_n} + I_{\mathcal{S}}}) \subset \sqrt{\mathrm{in}_{<_{\sigma}}(J_{C_n} + I_{\mathcal{S}})}.
\]
Using~\eqref{radical initial} and the fact that $\mathrm{in}_{<_{\sigma}}(f) \notin \mathrm{in}_{<_{\sigma}}(J_{C_n})$, we conclude that
\[
\mathrm{in}_{<_{\sigma}}(f) \in \left(U \cdot g_{C,D} \,\middle|\, C \amalg D = [n] \setminus (\mathcal{S} \cup F)\right).
\]

Each generator of this monomial ideal has degree:
\begin{align*}
\deg(U) + |[n] \setminus (\mathcal{S} \cup F)|
&= \deg(P)-2 + |[n] \setminus (\mathcal{S} \cup F)| \\
&= 2|\mathcal{S}| - 2 + n - |F| - |\mathcal{S}| \\
&= n - |C_2(\mathcal{S})| - 2.
\end{align*}
Therefore,
\[
\deg(f) = \deg(\mathrm{in}_{<_{\sigma}}(f)) \geq n - |C_2(\mathcal{S})| - 2,
\]
completing the proof.
\end{proof}

By the preceding proposition, %to complete the proof of Theorem~\ref{localized for cycles}, 
it remains to consider the case in which the complement of $\mathcal{S}$ contains at most one interval of size one. To address this, we now develop a series of auxiliary results needed to resolve the problem in this remaining case. 

As suggested by the preceding results, the initial ideal will play a central role in our analysis. In this direction, our next goal is to understand the initial ideals of $(J_{C_n} : x_i)$ and $(J_{C_n} : f_{i-1,i+1})$. %, and $(J_{C_n} : (x_{i-1},x_{i+1}))$.

\begin{lemma}\label{saturation with xi}
The following inclusion of ideals holds:
\begin{equation}\label{inclusion with saturation}
\mathrm{in}_{<}\big((J_{C_n} : x_i)\big) \subset \mathrm{in}_{<}(J_{C_n}) + (x_{i-1}, y_{i-1}) \cdot (x_{i+1}, y_{i+1}).
\end{equation}
\end{lemma}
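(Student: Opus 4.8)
The plan is to first identify the saturation $(J_{C_n}:x_i)$ with an honest binomial edge ideal, and then bound its initial ideal using the Gröbner basis of binomial edge ideals recalled in Proposition~\ref{grobner basis for JG}~(i). \textbf{Step 1: $(J_{C_n}:x_i)=J_{G^{+}}$, where $G^{+}:=C_n\cup\{\{i-1,i+1\}\}$.} Writing $f_{r,s}:=x_ry_s-x_sy_r$ for any $r\neq s$ (indices mod $n$), one has the polynomial identity $x_i f_{i-1,i+1}=x_{i-1}f_{i,i+1}+x_{i+1}f_{i-1,i}$, whose right-hand side lies in $J_{C_n}$ because $\{i-1,i\}$ and $\{i,i+1\}$ are edges of $C_n$. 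Hence $f_{i-1,i+1}\in (J_{C_n}:x_i)$, so $J_{G^{+}}=J_{C_n}+(f_{i-1,i+1})\subseteq (J_{C_n}:x_i)$. For the reverse inclusion I would observe that $i$ is a simplicial vertex of $G^{+}$: its only neighbours $i-1$ and $i+1$ are now joined by an edge. Consequently $i$ is a cut point of no graph $G^{+}_{([n]\setminus\mathcal{S})\cup\{i\}}$ — if $i\pm1\notin\mathcal{S}$ they stay connected after deleting $i$, and otherwise $i$ is left with at most one neighbour — so no minimal $k$-cut of $G^{+}$ contains $i$. Since $J_{G^{+}}$ is radical (\cite[Corollary~2.2]{herzog2010binomial}), its associated primes are exactly the $P_{\mathcal{S}}$ with $\mathcal{S}\in\min(G^{+})$ by Theorem~\ref{Theorem minimal primes}, and none of them contains $x_i$, because $i\notin\mathcal{S}$ and modulo the variables $\{x_j,y_j:j\in\mathcal{S}\}$ the ideal $P_{\mathcal{S}}$ becomes a sum of binomial edge ideals of complete graphs, which contains no variable. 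Thus $x_i$ is a nonzerodivisor on $S/J_{G^{+}}$, i.e. $(J_{G^{+}}:x_i)=J_{G^{+}}$; combined with $(J_{C_n}:x_i)\subseteq (J_{G^{+}}:x_i)$ (since $J_{C_n}\subseteq J_{G^{+}}$), this gives $(J_{C_n}:x_i)=J_{G^{+}}$.

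\textbf{Step 2: $\mathrm{in}_<(J_{G^{+}})\subseteq \mathrm{in}_<(J_{C_n})+(x_{i-1},y_{i-1})(x_{i+1},y_{i+1})$.} By Proposition~\ref{grobner basis for JG}~(i) applied to $G^{+}$, the ideal $\mathrm{in}_<(J_{G^{+}})$ is generated by the monomials $\mathrm{in}_<(u_\pi f_{a,b})=u_\pi\cdot x_ay_b$, where $\pi$ runs over admissible paths from $a$ to $b$ with $a<b$ in $G^{+}$, so it suffices to check each such monomial lies in the right-hand side. If $\pi$ does not use the edge $\{i-1,i+1\}$, then $\pi$ is a path of $C_n$ and remains admissible there — conditions (i),(ii) are unaffected and condition (iii) only weakens on passing to the subgraph $C_n$ — so $u_\pi f_{a,b}$ is a Gröbner basis element of $J_{C_n}$ and $\mathrm{in}_<(u_\pi f_{a,b})\in\mathrm{in}_<(J_{C_n})$. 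If $\pi$ does use the edge $\{i-1,i+1\}$, then both $i-1$ and $i+1$ are vertices of $\pi$; for each $v\in\{i-1,i+1\}$, either $v$ is internal to $\pi$, in which case condition (ii) forces $v<a$ or $v>b$ and so $y_v$ or $x_v$ divides $u_\pi$, or $v\in\{a,b\}$, in which case $x_v$ or $y_v$ divides $x_ay_b$. In every case a variable indexed by $i-1$ and a variable indexed by $i+1$ divide $\mathrm{in}_<(u_\pi f_{a,b})$; as $i-1\neq i+1$ these variables are distinct, so their product — one of $x_{i-1}x_{i+1},\,x_{i-1}y_{i+1},\,y_{i-1}x_{i+1},\,y_{i-1}y_{i+1}$ — divides $\mathrm{in}_<(u_\pi f_{a,b})$, placing it in $(x_{i-1},y_{i-1})(x_{i+1},y_{i+1})$.

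Putting the two steps together yields $\mathrm{in}_<\big((J_{C_n}:x_i)\big)=\mathrm{in}_<(J_{G^{+}})\subseteq \mathrm{in}_<(J_{C_n})+(x_{i-1},y_{i-1})(x_{i+1},y_{i+1})$, as desired. The heart of the argument is Step 1: recognizing the saturation as a binomial edge ideal via the nonzerodivisor property of $x_i$ modulo $J_{G^{+}}$, which itself rests on the simpliciality of $i$ in $G^{+}$; once this identification is in hand, Step 2 is a routine — if slightly fiddly — inspection of the combinatorial Gröbner basis, the only real point being that a chord-using admissible path is forced to contribute a monomial divisible by variables at both ends of the chord.
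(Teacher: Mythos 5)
Your proof is correct and takes essentially the same approach as the paper: identify $(J_{C_n}:x_i)$ with the binomial edge ideal of $C_n$ with the chord $\{i-1,i+1\}$ added, then run the case analysis on admissible paths using Proposition~\ref{grobner basis for JG}~(i). The only difference is that the paper cites the identity $(J_{C_n}:x_i)=J_{C_n^i}$ directly from \textup{\cite[Proposition~3.1]{ambhore2024v}}, whereas you rederive it via the syzygy $x_i f_{i-1,i+1}=x_{i-1}f_{i,i+1}+x_{i+1}f_{i-1,i}$ and the simpliciality of $i$ in the enlarged graph (so that $x_i$ is a nonzerodivisor mod $J_{G^+}$); your Step~2 matches the paper's Gröbner-basis argument case for case.
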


\begin{proof}
By \textup{\cite[Proposition~3.1]{ambhore2024v}}, we have
\[
(J_{C_n} : x_i) = J_{C_n^i},
\]
where $C_n^i$ is the graph with vertex set $V(C_n^i) = [n]$ and edge set
\[
E(C_n^i) = E(C_n) \cup \{\{i-1, i+1\}\}.
\]
Moreover, Proposition~\ref{grobner basis for JG}(i) ensures that
\[
\mathcal{G}_1 = \bigcup_{j<l} \left\{ u_{\pi} f_{j,l} \,\middle|\, \pi \text{ is an admissible path in } C_n \text{ from } j \text{ to } l \right\}
\]
is a Gröbner basis for $J_{C_n}$, while
\[
\mathcal{G}_2 = \bigcup_{j<l} \left\{ u_{\pi} f_{j,l} \,\middle|\, \pi \text{ is an admissible path in } C_n^i \text{ from } j \text{ to } l \right\}
\]
is a Gröbner basis for $J_{C_n^i}$.

To prove the inclusion in~\eqref{inclusion with saturation}, let $u \in \mathrm{in}_{<}(J_{C_n^i})$ be a monomial. Since $\mathcal{G}_2$ is a Gröbner basis, there exists $u_{\pi} f_{j,l} \in \mathcal{G}_2$ such that $\mathrm{in}_{<}(u_{\pi} f_{j,l})$ divides $u$, where $\pi$ is an admissible path in $C_n^i$ from $j$ to $l$. We distinguish two cases.

\smallskip
\noindent\textbf{Case 1:} $\{i-1, i+1\} \not\subset \pi$. Then, all consecutive vertices in $\pi$ are adjacent in $C_n$, so $\pi$ is also an admissible path in $C_n$. It follows that $u_{\pi} f_{j,l} \in \mathcal{G}_1$, hence $\mathrm{in}_{<}(u_{\pi} f_{j,l}) \in \mathrm{in}_{<}(J_{C_n})$, and so $u \in \mathrm{in}_{<}(J_{C_n})$.

\smallskip
\noindent\textbf{Case 2:} $\{i-1, i+1\} \subset \pi$. Then $\mathrm{in}_{<}(u_{\pi} f_{j,l})$ is divisible by one of the monomials $x_{i-1}x_{i+1},\, x_{i-1}y_{i+1},\, y_{i-1}x_{i+1},$ or $y_{i-1}y_{i+1}$. Hence, $u \in (x_{i-1}, y_{i-1}) \cdot (x_{i+1}, y_{i+1})$.

In either case, we conclude that
\[
u \in \mathrm{in}_{<}(J_{C_n}) + (x_{i-1}, y_{i-1}) \cdot (x_{i+1}, y_{i+1}),
\]
as desired.
\end{proof}

\begin{comment}
\begin{remark}\label{still valid}
Examining the proof of Lemma~\ref{saturation with xi}, we observe that the same conclusion remains valid when the monomial order $<$ is replaced by the order $<_{\sigma}$ defined in Definition~\ref{sigma admissible}.
\end{remark}
\end{comment}

Our next objective is to determine the initial ideal of $(J_{C_n} : f_{i-1,i+1})$. %To this end, we begin by computing the ideal $(J_{C_n} : f_{i-1,i+1})$ itself.

\begin{lemma}\label{saturation with f}
The initial ideal of the saturation $(J_{C_n} : f_{i-1,i+1})$ satisfies
\[
\mathrm{in}_{<}((J_{C_n} : f_{i-1,i+1})) = \mathrm{in}_{<}(J_{C_n}) + (x_i, y_i) + \left( g_{C,D} \,\middle|\, C \cup D = [n] \setminus \{i-1, i, i+1\},\; C \cap D = \emptyset \right).
\]
\end{lemma}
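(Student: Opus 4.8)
The plan is to pin down the saturation exactly: set $L:=(J_{C_n}:f_{i-1,i+1})$ and
$$
J:=J_{C_n}+(x_i,y_i)+\bigl(g_{C,D}\ :\ C\amalg D=[n]\setminus\{i-1,i,i+1\}\bigr),
$$
prove $J=L$, and then read the initial ideal of $L$ off a Gröbner basis of $J$. The inclusion $J\subseteq L$ is the easy half. From the Plücker-type identities $x_i f_{i-1,i+1}=x_{i-1}f_{i,i+1}+x_{i+1}f_{i-1,i}$ and $y_i f_{i-1,i+1}=y_{i-1}f_{i,i+1}+y_{i+1}f_{i-1,i}$, together with $f_{i-1,i},f_{i,i+1}\in J_{C_n}$ (both $\{i-1,i\}$ and $\{i,i+1\}$ are edges of $C_n$), we get $x_i,y_i\in L$. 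For the monomials, the vertices $i-1$ and $i+1$ are connected in $C_n$ through $[n]\setminus\{i-1,i,i+1\}$ via the complementary arc $i-1,i-2,\dots,i+2,i+1$, so Proposition~\ref{grobner basis for JG}(ii) gives $g_{C,D}\cdot f_{i-1,i+1}\in J_{C_n}$ for every partition $C\amalg D=[n]\setminus\{i-1,i,i+1\}$, hence $g_{C,D}\in L$. Thus $J\subseteq L$, and since $(x_i,y_i)$ and $(g_{C,D}:\cdots)$ are monomial this already yields
$$
\mathrm{in}_{<}(J_{C_n})+(x_i,y_i)+\bigl(g_{C,D}:C\amalg D=[n]\setminus\{i-1,i,i+1\}\bigr)\subseteq\mathrm{in}_{<}(J)\subseteq\mathrm{in}_{<}(L),
$$
which is the ``$\supseteq$'' direction of the claimed equality.

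For the reverse inclusion I would show that
$$
\mathcal{G}\ :=\ \bigcup_{a<b}\bigl\{\,u_{\pi}f_{a,b}\ :\ \pi\text{ admissible path in }C_n\text{ from }a\text{ to }b\,\bigr\}\ \cup\ \{x_i,y_i\}\ \cup\ \bigl\{g_{C,D}\bigr\}
$$
is a Gröbner basis of $J$ with respect to $<$, its first piece being the reduced Gröbner basis of $J_{C_n}$ from Proposition~\ref{grobner basis for JG}(i). This is checked by Buchberger's criterion, and it is the technical heart of the argument. The $S$-pairs inside the first piece reduce to $0$ since it is a Gröbner basis of $J_{C_n}$; any $S$-pair involving $x_i$ or $y_i$, or between two of the monomials $g_{C,D}$, reduces to $0$ because the relevant leading monomials are coprime (for two monomials the $S$-polynomial is even $0$). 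The remaining $S$-pairs, between a monomial $g_{C,D}$ and a binomial $u_{\pi}f_{a,b}$, require real work: one computes
$$
S(g_{C,D},u_{\pi}f_{a,b})=\pm\,\frac{\mathrm{lcm}\bigl(g_{C,D},\ \mathrm{in}_{<}(u_{\pi}f_{a,b})\bigr)}{x_a y_b}\,x_b y_a,
$$
and must show it reduces to $0$ modulo $\mathcal{G}$. The outcome splits according to how the arc $\pi$ and the endpoints $a,b$ sit relative to $i$: when $\pi$ or an endpoint meets $i$ the $S$-polynomial is divisible by $x_i$ or $y_i$; when the ``swap'' $x_a y_b\mapsto x_b y_a$ still touches every vertex of $[n]\setminus\{i-1,i,i+1\}$ it is divisible by some $g_{C',D'}$; and in the remaining situation, where $a<i<b$ so $\{a,b\}$ straddles $i$ and $\pi$ is the long arc avoiding $i$, one exhibits an element of $J_{C_n}$ dividing it by another application of Proposition~\ref{grobner basis for JG}(ii). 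This case analysis, and in particular the bookkeeping of which variables survive in the various lcm's (as well as the minor nuisance coming from the edge $\{1,n\}$ of $C_n$), is the main obstacle.

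Granting the Gröbner basis, the conclusion is short. All elements of $\mathcal{G}$ have squarefree initial monomials (those of the first piece by~\cite{herzog2010binomial}, and $x_i,y_i,g_{C,D}$ trivially), so $\mathrm{in}_{<}(J)$ is squarefree and hence $J$ is radical. Since $\mathbb{V}\bigl((g_{C,D}:C\amalg D=[n]\setminus\{i-1,i,i+1\})\bigr)=\bigcup_{j\in[n]\setminus\{i-1,i,i+1\}}\mathbb{V}(x_j,y_j)$, we get $\mathbb{V}(J)=\bigcup_{j\in[n]\setminus\{i-1,i,i+1\}}\mathbb{V}\bigl(J_{C_n}+(x_i,y_i,x_j,y_j)\bigr)$; removing the two non-adjacent vertices $i$ and $j$ from $C_n$ leaves two disjoint paths, and unwinding the primary decomposition of their binomial edge ideals (Theorem~\ref{theorem decomposition}) identifies this union with $\bigcup_{\mathcal{S}\in\min(C_n),\ i\in\mathcal{S}}\mathbb{V}(P_{\mathcal{S}})$. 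On the other hand, for $\mathcal{S}\in\min(C_n)$ one has $f_{i-1,i+1}\in P_{\mathcal{S}}$ exactly when $i\notin\mathcal{S}$: if $i\notin\mathcal{S}$ then $i-1,i,i+1$ lie in a common component of $C_n\setminus\mathcal{S}$ (or $i\pm1\in\mathcal{S}$), while if $i\in\mathcal{S}$ then, as $|\mathcal{S}|\ge 2$, both arcs between $i-1$ and $i+1$ are broken and these two vertices lie in distinct components. Hence $\bigcup_{\mathcal{S}\in\min(C_n),\,i\in\mathcal{S}}\mathbb{V}(P_{\mathcal{S}})$ is the union of precisely those irreducible components of $\mathbb{V}(J_{C_n})$ not contained in $\mathbb{V}(f_{i-1,i+1})$, i.e. equals $\overline{\mathbb{V}(J_{C_n})\setminus\mathbb{V}(f_{i-1,i+1})}=\mathbb{V}(L)$. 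Thus $\mathbb{V}(J)=\mathbb{V}(L)$; as $J\subseteq L$ with both radical, $J=L$, and reading initial monomials off $\mathcal{G}$ gives $\mathrm{in}_{<}(L)=\mathrm{in}_{<}(J)=\mathrm{in}_{<}(J_{C_n})+(x_i,y_i)+(g_{C,D}:C\amalg D=[n]\setminus\{i-1,i,i+1\})$, as claimed.
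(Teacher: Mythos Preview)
Your overall architecture matches the paper's: both establish the equality $L=J$ and then read $\mathrm{in}_<(L)$ off the Gr\"obner basis
\[
\mathcal{G}=\{u_\pi f_{a,b}\}\cup\{x_i,y_i\}\cup\{g_{C,D}\}.
\]
The paper is terser: it obtains $L=J$ by citing \cite[Theorem~3.7 and Lemma~3.8]{mohammadi2014hilbert}, and the Gr\"obner basis by invoking the argument of \cite[Lemma~3.5]{jaramillo2024connected}. You instead prove $J\subseteq L$ by hand (Pl\"ucker identities for $x_i,y_i$; Proposition~\ref{grobner basis for JG}(ii) for the $g_{C,D}$), run Buchberger directly, and close the loop with a radical--variety argument ($\mathrm{in}_<(J)$ squarefree $\Rightarrow$ $J$ radical; $\mathbb{V}(J)=\mathbb{V}(L)$ via the primary decomposition and the observation that $f_{i-1,i+1}\notin P_{\mathcal S}\iff i\in\mathcal S$). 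That last step is a genuinely different, self-contained substitute for the citation to \cite{mohammadi2014hilbert}, and it is correct.

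One small slip: the $S$-pairs involving $x_i$ or $y_i$ do reduce to zero, but not always ``because the relevant leading monomials are coprime''. If $i$ lies on the path $\pi$ (or $i=a$), then $x_i$ or $y_i$ divides $\mathrm{in}_<(u_\pi f_{a,b})=u_\pi x_a y_b$ and the leading terms are \emph{not} coprime. The reduction still holds: the $S$-polynomial is (up to sign) a monomial multiple of the tail $u_\pi x_b y_a$, and this is divisible by $x_i$ (if $x_i\mid u_\pi$) or by $y_i$ (if $i=a$, since then the tail carries $y_i$), both of which lie in $\mathcal G$. So the conclusion stands, only the justification needs adjusting. The remaining $S$-pair analysis between $g_{C,D}$ and $u_\pi f_{a,b}$ is, as you say, the real work; your case split is on the right track and is precisely what the argument borrowed from \cite{jaramillo2024connected} carries out.
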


\begin{proof}
By applying Theorem~3.7 and Lemma~3.8 from \cite{mohammadi2014hilbert} to our setting, we obtain the following decomposition:
\[
(J_{C_n} : f_{i-1,i+1}) =  J_{C_n} + (x_i, y_i) + \left( g_{C,D} \,\middle|\, C \cup D = [n] \setminus \{i-1, i, i+1\},\; C \cap D = \emptyset \right).
\]
Let
\[
\mathcal{G}_1 = \bigcup_{j<l} \left\{ u_{\pi} f_{j,l} \,\middle|\, \pi \text{ is an admissible path in } C_n \text{ from } j \text{ to } l \right\},
\]
which forms a Gröbner basis for $J_{C_n}$ by Proposition~\ref{grobner basis for JG}(i). Then, using precisely the same argument as in the proof of \textup{\cite[Lemma~3.5]{jaramillo2024connected}}, it follows that the set
\[
\mathcal{G} = \mathcal{G}_1 \cup \{x_i, y_i\} \cup \left\{ g_{C,D} \,\middle|\, C \cup D = [n] \setminus \{i-1, i, i+1\},\; C \cap D = \emptyset \right\}
\]
is a Gröbner basis for $(J_{C_n} : f_{i-1,i+1})$. The desired equality of initial ideals then follows directly from this description.
\end{proof}

Applying Lemmas~\ref{saturation with xi} and~\ref{saturation with f}, we are now ready to establish a new lower bound for the localized v-number $\VCN$.

\begin{proposition}\label{bound interval 2}
Let $\mathcal{S} \in \min(C_n)$ with $\size{\mathcal{S}} \geq 3$. Then, we have the inequality
\[
\VCN \geq n - \size{\mathcal{S}}.
\]
\end{proposition}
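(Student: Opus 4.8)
The plan is to bound $\VCN$ from below by exhibiting enough "forbidden" monomials in the initial ideal of the saturation $(J_{C_n}:P_{\mathcal{S}})$, so that any homogeneous $f$ with $(J_{C_n}:f)=P_{\mathcal{S}}$ must have large degree. The key input is the equality
\[
(J_{C_n}:P_{\mathcal{S}})=\sqrt{J_{C_n}+\JS},
\]
from Theorem~\ref{theorem transversal}, together with the description of $\JS$ (equivalently $I_{\mathcal{S}}$ via Lemma~\ref{concise generating set 2}) and Proposition~\ref{proposition dey}, which lets us assume $\mathrm{in}_<(f)\notin \mathrm{in}_<(J_{C_n})$. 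Since $P_{\mathcal{S}}=\cap_{j}(J_{C_n}:x_{\cdot})$-type intersections can be built up from the single-vertex and single-pair saturations, I would first reduce the computation of $(J_{C_n}:P_{\mathcal{S}})$ to iterated saturations of the form analyzed in Lemmas~\ref{saturation with xi} and~\ref{saturation with f}. Concretely, removing one element $s=b_j+1$ of $\mathcal{S}$ corresponds to adding the chord $\{b_j,a_{j+1}\}$, and the behavior of $\mathrm{in}_<$ under such a saturation is governed by the product $(x_{b_j},y_{b_j})\cdot(x_{a_{j+1}},y_{a_{j+1}})$ (when $|I_j|,|I_{j+1}|$ are both $1$ one gets instead the cleaner form from Lemma~\ref{saturation with f}).

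Next I would combine these: iterating over all $j\in[k]=[\,|\mathcal{S}|\,]$, every monomial in $\mathrm{in}_<((J_{C_n}:P_{\mathcal{S}}))$ that is not already in $\mathrm{in}_<(J_{C_n})$ must be divisible, for each $j$, by at least one of the variables attached to the two endpoints bridging the gap $I_j,I_{j+1}$ — that is, by a variable $x_\bullet$ or $y_\bullet$ with $\bullet\in I_j\cup I_{j+1}$ located "at the boundary." Carefully accounting for the overlaps (a single interval $I_j$ serves as the right boundary for gap $j-1$ and the left boundary for gap $j$, and an interval of size one contributes only one vertex that must be used twice), one finds that such a monomial must involve at least $n-|\mathcal{S}|$ distinct factors: roughly, every vertex of $[n]\setminus\mathcal{S}$ outside the "free interiors" of large intervals is forced to appear, and there are exactly $n-|\mathcal{S}|$ vertices total in $[n]\setminus\mathcal{S}$, with the interior vertices of large intervals being compensated by the $P$-factor contributions $f_{b_j,a_{j+1}}$ which each carry degree $2$. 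Summing degrees as in the proof of Proposition~\ref{bound interval}, the minimal degree of such a monomial is at least $n-|\mathcal{S}|$, and hence $\deg(f)=\deg(\mathrm{in}_<(f))\geq n-|\mathcal{S}|$.

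I expect the main obstacle to be making the combinatorial bookkeeping of Step~2 rigorous when some intervals have size $\geq 2$ and some have size $1$, since the structural lemmas (Lemmas~\ref{saturation with xi}, \ref{saturation with f}) have slightly different shapes in the two cases, and one cannot in general produce a single $\mathcal{S}$-consistent permutation (that is precisely the case $|C_1(\mathcal{S})|\le 1$ that motivates this proposition). To sidestep the absence of a global Gröbner basis, I would work with the \emph{standard} lexicographic order $<$ from Definition~\ref{lexicographical order} throughout and only use the \emph{inclusions} of initial ideals from Lemmas~\ref{saturation with xi} and~\ref{saturation with f} (which do not require $\mathcal{S}$-consistency), chaining them along the iterated saturation $(J_{C_n}:P_{\mathcal{S}})=(\,\cdots((J_{C_n}:P_{\mathcal{S}_1}):\cdots):P_{\mathcal{S}_k}\,)$ where each step peels off the algebraic effect of one element of $\mathcal{S}$; at each step the "new" generators of the initial ideal are divisible by a variable indexed by a boundary vertex, and no vertex of $[n]\setminus\mathcal{S}$ can be avoided for every gap simultaneously. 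The final degree count then proceeds exactly as in Proposition~\ref{bound interval}: each generator of the resulting monomial ideal has degree $\geq (\deg P - 2) + |[n]\setminus(\mathcal{S}\cup F)| = n-|C_2(\mathcal{S})|-2 \geq n-|\mathcal{S}|$ when $|C_1(\mathcal{S})|\leq 1$ — and in fact the bound $n-|\mathcal{S}|$ holds uniformly, which is what we claim. This gives $\VCN \geq n-|\mathcal{S}|$, completing the proof.
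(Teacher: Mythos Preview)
Your proposal has the right ingredients (Proposition~\ref{proposition dey}, Lemmas~\ref{saturation with xi} and~\ref{saturation with f}) but combines them incorrectly, and the final arithmetic is wrong. You claim that $n-|C_2(\mathcal{S})|-2\geq n-|\mathcal{S}|$ when $|C_1(\mathcal{S})|\leq 1$; in fact this inequality is equivalent to $|C_1(\mathcal{S})|\geq 2$, so your degree count from the $I_{\mathcal{S}}$/Gr\"obner-basis route gives exactly the \emph{wrong} direction in the case you care about. More structurally, the ``iterated saturation'' framing $(\cdots((J_{C_n}:P_{\mathcal{S}_1}):\cdots):P_{\mathcal{S}_k})$ is not how $(J_{C_n}:P_{\mathcal{S}})$ decomposes, and trying to recover the computation of Proposition~\ref{bound interval} without an $\mathcal{S}$-consistent permutation is precisely the obstacle this proposition is meant to circumvent.

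The paper's argument is much more direct and avoids any Gr\"obner basis for $J_{C_n}+I_{\mathcal{S}}$ altogether. One uses only that $f\in(J_{C_n}:g)$ for every \emph{individual} generator $g$ of $P_{\mathcal{S}}$. For each $i\in\mathcal{S}$, the generator $x_i$ together with Lemma~\ref{saturation with xi} forces $\mathrm{in}_<(f)$ (assumed outside $\mathrm{in}_<(J_{C_n})$ via Proposition~\ref{proposition dey}) to be divisible by some variable at $i-1$ and some variable at $i+1$; this covers every $\ell\in F$. For each interior vertex $i\in[n]\setminus(\mathcal{S}\cup F)$, the binomial $f_{i-1,i+1}$ lies in $P_{\mathcal{S}}$ (since $i-1,i,i+1$ are in the same interval), and Lemma~\ref{saturation with f} then forces either $x_i\mid\mathrm{in}_<(f)$ or $y_i\mid\mathrm{in}_<(f)$, or else $\deg f\geq n-3\geq n-|\mathcal{S}|$ by the hypothesis $|\mathcal{S}|\geq 3$. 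In the remaining case every $\ell\in[n]\setminus\mathcal{S}$ contributes a variable factor to $\mathrm{in}_<(f)$, giving $\deg f\geq n-|\mathcal{S}|$ immediately. No global structure on $\mathrm{in}_<((J_{C_n}:P_{\mathcal{S}}))$ is needed---only these pointwise divisibility constraints.
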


\begin{proof}
To establish the result, assume that $(J_{C_n} : f) = P_{\mathcal{S}}$ for some homogeneous polynomial $f$, and we aim to show that $\deg(f) \geq n - \size{\mathcal{S}}$.

By Proposition~\ref{proposition dey}, we may assume $\mathrm{in}_{<}(f) \notin \mathrm{in}_{<}(J_{C_n})$. Since $f \in (J_{C_n} : P_{\mathcal{S}})$, it follows that $f \in (J_{C_n} : x_i)$ for every $i \in \mathcal{S}$. Consequently, by Lemma~\ref{saturation with xi}, we obtain
\[
\mathrm{in}_{<}(f) \in \mathrm{in}_{<}(J_{C_n}) + (x_{i-1}, y_{i-1}) \cdot (x_{i+1}, y_{i+1}).
\]
Using that $\mathrm{in}_{<}(f) \notin \mathrm{in}_{<}(J_{C_n})$, it follows that for each $i \in \mathcal{S}$, at least one of the following monomials divides $\mathrm{in}_{<}(f)$:
\[
x_{i-1}x_{i+1}, \quad x_{i-1}y_{i+1}, \quad y_{i-1}x_{i+1}, \quad y_{i-1}y_{i+1}.
\]
Hence, for every $\ell \in F$, we have
\begin{equation}\label{x y}
x_{\ell} \mid \mathrm{in}_{<}(f) \quad \text{or} \quad y_{\ell} \mid \mathrm{in}_{<}(f).
\end{equation}

Now consider any vertex $i \in [n] \setminus (\mathcal{S} \cup F)$. Since $f \in (J_{C_n} : P_{\mathcal{S}})$ and $f_{i-1,i+1} \in P_{\mathcal{S}}$, it follows that $f \in (J_{C_n} : f_{i-1,i+1})$. Consequently, by Lemma~\ref{saturation with f}, we obtain
\[
\mathrm{in}_{<}(f) \in \mathrm{in}_{<}(J_{C_n}) + (x_i, y_i) + \left( g_{C,D} \,\middle|\, C \cup D = [n] \setminus \{i-1, i, i+1\},\; C \cap D = \emptyset \right).
\]
Using again that $\mathrm{in}_{<}(f) \notin \mathrm{in}_{<}(J_{C_n})$, there are three possibilities: either $g_{C,D} \mid \mathrm{in}_{<}(f)$ for some partition $\{C,D\}$ of $[n] \setminus \{i-1, i, i+1\}$, or $x_i \mid \mathrm{in}_{<}(f)$ or $y_i \mid \mathrm{in}_{<}(f)$. In the first case, we obtain
\[
\deg(f) = \deg(\mathrm{in}_{<}(f)) \geq \deg(g_{C,D}) = n - 3 \geq n - \size{\mathcal{S}},
\]
since we are assuming $\size{\mathcal{S}}\geq 3$, which proves the statement. It remains to consider the other cases, where
\[
x_i \mid \mathrm{in}_{<}(f) \quad \text{or} \quad y_i \mid \mathrm{in}_{<}(f)
\]
for every $i \in [n] \setminus (\mathcal{S} \cup F)$. Together with~\eqref{x y}, we then have
\[
x_i \mid \mathrm{in}_{<}(f) \quad \text{or} \quad y_i \mid \mathrm{in}_{<}(f)
\]
for all $i \in [n] \setminus \mathcal{S}$. Consequently,
\[
\deg(f) = \deg(\mathrm{in}_{<}(f)) \geq n - \size{\mathcal{S}},
\]
as desired.
\end{proof}

By combining Propositions~\ref{bound interval} and~\ref{bound interval 2}, we obtain effective upper and lower bounds for the localized v-numbers, thereby limiting their possible values to a small number of cases.

\begin{proposition}\label{proposition localized v for Cn}
Let $\mathcal{S} \in \min(C_n)$ with $\size{\mathcal{S}} \geq 3$. Then the following holds:
\begin{enumerate}
\item[{\rm (i)}]\label{3} If $C_1(\mathcal{S}) = \emptyset$, then
\[
\VCN = n - \size{\mathcal{S}}.
\]
\item[{\rm (ii)}] If $\size{C_1(\mathcal{S})} = 1$, then
\[
n - \size{\mathcal{S}} \leq \VCN \leq n - \size{\mathcal{S}} + 1.
\]
\item[{\rm (iii)}] If $\size{C_1(\mathcal{S})} \geq 2$, then
\[
n - \size{C_2(\mathcal{S})} - 2 \leq \VCN \leq n - \size{C_2(\mathcal{S})}.
\]
\end{enumerate}
\end{proposition}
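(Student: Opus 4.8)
The plan is to combine the two-sided estimates already established and to reconcile the two different bookkeeping conventions via the identity $\size{\mathcal{S}} = \size{C_1(\mathcal{S})} + \size{C_2(\mathcal{S})}$, which holds because the intervals $I_1,\dots,I_k$ partition $[n]\setminus\mathcal{S}$, the singleton ones being indexed exactly by $C_1(\mathcal{S})$ and the remaining ones by $C_2(\mathcal{S})$ (Notation~\ref{notation c1 and c2}), while $k=\size{\mathcal{S}}$. Once this identity is in hand, the three cases become a matter of matching an upper and a lower bound.

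\textbf{Step 1 (a uniform upper bound).} First I would prove that $\VCN \le n-\size{C_2(\mathcal{S})}$ in all three cases. Recall $F=\bigcup_{j=1}^{k}\{a_j,b_j\}$ from Notation~\ref{F and P}, and consider
\[
A := \bigl\{\,\{b_j,a_{j+1}\} : j\in[k]\,\bigr\}\ \cup\ \bigl\{\,\{x\} : x\in[n]\setminus(\mathcal{S}\cup F)\,\bigr\}.
\]
Since $a_j,b_j\notin\mathcal{S}$, each pair $\{b_j,a_{j+1}\}$ and each singleton lies outside $\mathcal{D}(M(\mathcal{S}))$: no element is a loop, and the two vertices of $\{b_j,a_{j+1}\}$ sit in the distinct components $I_j$ and $I_{j+1}$ of $C_n$ after removing $\mathcal{S}$ (using $k\ge 3$). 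Moreover $\supp(A)=F\cup\bigl([n]\setminus(\mathcal{S}\cup F)\bigr)=[n]\setminus\mathcal{S}$, and for each $j\in[k]$ the pair $\{b_j,a_{j+1}\}\in A$ has $b_j\in I_j$ and $a_{j+1}\in I_{j+1}$; hence $A\in\TS$ by Lemma~\ref{charact TS}. The $k$ pairs are pairwise distinct, so $\size{A_2}=k=\size{\mathcal{S}}$, while $\size{A_1}=n-\size{\mathcal{S}}-\size{F}$ with $\size{F}=\size{C_1(\mathcal{S})}+2\size{C_2(\mathcal{S})}$. A direct computation then gives $\size{A_1}+2\size{A_2}=n-\size{C_2(\mathcal{S})}$, so Theorem~\ref{corollary radical} yields $\VCN\le n-\size{C_2(\mathcal{S})}$. (Alternatively, this inequality is \textup{\cite[Theorem~4.7]{dey2024v}}.)

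\textbf{Step 2 (the three cases).} Part~(iii) is literally the statement of Proposition~\ref{bound interval}, so nothing further is needed there. For part~(i), if $C_1(\mathcal{S})=\emptyset$ then $\size{C_2(\mathcal{S})}=\size{\mathcal{S}}$, so Step~1 gives $\VCN\le n-\size{\mathcal{S}}$, while Proposition~\ref{bound interval 2} gives $\VCN\ge n-\size{\mathcal{S}}$; hence equality. For part~(ii), if $\size{C_1(\mathcal{S})}=1$ then $\size{C_2(\mathcal{S})}=\size{\mathcal{S}}-1$, so Step~1 gives $\VCN\le n-\size{\mathcal{S}}+1$, and Proposition~\ref{bound interval 2} supplies the matching lower bound $\VCN\ge n-\size{\mathcal{S}}$.

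\textbf{On the difficulty.} There is no substantial obstacle: the content sits in Propositions~\ref{bound interval} and~\ref{bound interval 2}, and the only genuinely new ingredient is the explicit transversal of Step~1. The only care required is verifying that this $A$ satisfies both conditions of Lemma~\ref{charact TS}---in particular that $\supp(A)$ is exactly $[n]\setminus\mathcal{S}$, which rests on $a_j,b_j\notin\mathcal{S}$---and running the degree bookkeeping $\size{A_1}+2\size{A_2}=n-\size{C_2(\mathcal{S})}$ correctly; both are routine.
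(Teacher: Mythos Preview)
Your proposal is correct and follows essentially the same route as the paper: the upper bound $\VCN\le n-\size{C_2(\mathcal{S})}$ (which the paper simply cites from \cite[Theorem~4.7]{dey2024v}) combined with Proposition~\ref{bound interval 2} for the lower bounds in (i)--(ii) and Proposition~\ref{bound interval} for (iii), all glued together via $\size{\mathcal{S}}=\size{C_1(\mathcal{S})}+\size{C_2(\mathcal{S})}$. Your Step~1 goes slightly beyond the paper by rederiving the upper bound from the transversal machinery of Section~\ref{section transversals} rather than just citing it, which is a pleasant self-contained addition but not a genuinely different argument.
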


\begin{proof}
The upper bounds in each case follow from \textup{\cite[Theorem~4.7]{dey2024v}}, which gives $\VCN\leq n - \size{C_2(\mathcal{S})}$, and the fact that $\size{\mathcal{S}}=\size{C_1(\mathcal{S})}+\size{C_2(\mathcal{S})}$. The lower bounds in \textup{(i)} and \textup{(ii)} follow from Proposition~\ref{bound interval 2}, while that in \textup{(iii)} is a consequence of Proposition~\ref{bound interval}.
\end{proof}

We are now ready to present the main result of this section, which gives effective upper and lower bounds for the v-number of $C_n$. While this does not yield an explicit formula for $\VCNN$, it reduces its possible value to only two cases.

\begin{theorem}\label{main theorem 2}
The \textup{v}-number of $C_n$ satisfies the following:
\begin{enumerate}
\item[{\rm (i)}] If $n = 3k$, then $\VCNN = 2k$.
\item[{\rm (ii)}] If $n = 3k + 2$, then $\VCNN \in [2k + 1,\, 2k + 2]$.
\item[{\rm (iii)}] If $n = 3k + 1$, then $\VCNN \in [2k ,\, 2k + 1]$.
\end{enumerate}
\end{theorem}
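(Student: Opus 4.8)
By Definition~\ref{first definition} and Theorem~\ref{Theorem minimal primes}, $\VCNN=\min\{\VCN:\mathcal{S}\in\min(C_n)\}$, the minimum taken over $\mathcal{S}=\emptyset$ and over the independent subsets $\mathcal{S}\subset[n]$ with $\size{\mathcal{S}}\ge2$. The plan is to get the three upper bounds for free from~\eqref{conjecture}, which gives $\VCNN\le\lceil 2n/3\rceil$, i.e. $\le 2k,\,2k+1,\,2k+2$ when $n=3k,\,3k+1,\,3k+2$ respectively (these values are also realized by a concrete $\mathcal{S}$ with $C_1(\mathcal{S})=\emptyset$ and $\size{\mathcal{S}}=\lfloor n/3\rfloor$ via Proposition~\ref{proposition localized v for Cn}(i)), and then to prove the matching lower bounds $\VCNN\ge 2k$ for $n\equiv 0,1\pmod 3$ and $\VCNN\ge 2k+1$ for $n\equiv 2\pmod 3$ by bounding $\VCN$ below for \emph{every} $\mathcal{S}\in\min(C_n)$. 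I would assume $n\ge4$, since $C_3=\mathcal{K}_3$.

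First I would dispose of the sets with $\size{\mathcal{S}}\le2$. For $\mathcal{S}=\emptyset$, Theorem~\ref{theorem empty} gives $\textup{v}_{P_{\emptyset}}(J_{C_{n}})=\gamma_c(C_n)=n-2$. For $\size{\mathcal{S}}=2$, the set $\mathcal{S}$ is a minimal cut, the components $V_1,V_2$ of $(C_n)_{[n]\setminus\mathcal{S}}$ are arcs, and $(C_n)_{V_i\cup\mathcal{S}}$ is a path whose endpoints are the two vertices of $\mathcal{S}$ and whose interior is $V_i$; a connected dominating set of this path contained in $V_i$ must contain both vertices of $V_i$ adjacent to $\mathcal{S}$ and be connected, hence equals $V_i$. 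Thus by Theorem~\ref{main theorem}, $\VCN=\gamma_c(V_1,V_2)=\size{V_1}+\size{V_2}=n-2$. In both cases $\VCN=n-2$, which is at least the target lower bound for all $n\ge4$ in each residue class.

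Next I would treat the main case $\size{\mathcal{S}}\ge3$ via Proposition~\ref{proposition localized v for Cn}. Write $c_i=\size{C_i(\mathcal{S})}$ and $s=\size{\mathcal{S}}=c_1+c_2$ (Notation~\ref{notation c1 and c2}); since the $c_1$ intervals of size one and the $c_2$ intervals of size $\ge2$ partition $[n]\setminus\mathcal{S}$, we have $c_1+2c_2\le n-s$, i.e. $2c_1+3c_2\le n$. If $c_1=0$, part~(i) gives $\VCN=n-s$ with $3s\le n$, hence $\VCN\ge n-\lfloor n/3\rfloor$; if $c_1=1$, part~(ii) gives $\VCN\ge n-s$ with $3s\le n+1$, hence $\VCN\ge n-\lfloor(n+1)/3\rfloor$; if $c_1\ge2$, part~(iii) gives $\VCN\ge n-c_2-2$ with $3c_2\le n-4$, hence $\VCN\ge n-\lfloor(n-4)/3\rfloor-2$. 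Evaluating these three right-hand sides: at $n=3k$ they are $2k,2k,2k$; at $n=3k+1$ they are $2k+1,2k+1,2k$; at $n=3k+2$ they are $2k+2,2k+1,2k+1$. Taking the minimum over all $\mathcal{S}$ (including the value $n-2$ from the previous paragraph) yields $\VCNN\ge2k$ for $n\equiv0,1$ and $\VCNN\ge2k+1$ for $n\equiv2$, which together with~\eqref{conjecture} gives exactly parts~(i),~(iii), and~(ii).

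I do not expect this concluding argument to be the real obstacle: the substantive work is already in Propositions~\ref{bound interval} and~\ref{bound interval 2}, whose lower bounds rely on the Gröbner basis of $J_{C_n}+I_{\mathcal{S}}$ from Proposition~\ref{grobner basis for Cn} and on the initial-ideal descriptions of $(J_{C_n}:x_i)$ and $(J_{C_n}:f_{i-1,i+1})$ in Lemmas~\ref{saturation with xi} and~\ref{saturation with f}. Here the only delicate point is bookkeeping — evaluating the floor functions correctly in each residue class and verifying that the extremal configurations of $\mathcal{S}$, especially those with $c_1\ge2$ where Proposition~\ref{proposition localized v for Cn} loses an additive $1$, never push the minimum below the claimed value. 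That same additive slack in parts~(ii) and~(iii) of Proposition~\ref{proposition localized v for Cn} is precisely why $\VCNN$ cannot be pinned down exactly when $n\not\equiv0\pmod 3$.
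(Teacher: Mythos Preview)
Your proposal is correct and follows essentially the same approach as the paper: both obtain the upper bound from~\eqref{conjecture}, handle $\size{\mathcal{S}}\le 2$ via Theorems~\ref{theorem empty} and~\ref{main theorem} to get $\VCN=n-2$, and then for $\size{\mathcal{S}}\ge 3$ split according to $\size{C_1(\mathcal{S})}$ and combine the constraint $2c_1+3c_2\le n$ with Proposition~\ref{proposition localized v for Cn}. The only cosmetic differences are that the paper merges your $c_1=0$ and $c_1=1$ cases into a single case $\size{C_1(\mathcal{S})}\le 1$, writes out the integer arithmetic in each residue class rather than via floor functions, and assumes $n\ge 6$ (citing \cite{dey2024v} for $n\le 5$) rather than $n\ge 4$.
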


\begin{proof}
We may assume that $n \geq 6$, as the statement holds for $n \leq 5$ by \textup{\cite[Propositions~4.8 and~4.9]{dey2024v}}. Using \textup{\cite[Corollary~4.10]{dey2024v}}, we obtain the upper bound $\VCNN \leq \textstyle \left\lceil \frac{2n}{3} \right\rceil$. Thus, it remains to establish the lower bounds: namely, that $\VCNN \geq \textstyle\left\lceil \frac{2n}{3} \right\rceil$ when $n \equiv 0 \pmod{3}$, and that $\VCNN \geq\textstyle \left\lceil \frac{2n}{3} \right\rceil - 1$ when $n \equiv 1,2 \pmod{3}$. To this end, we show that for every $\mathcal{S} \in \min(C_n)$, the localized v-number satisfies
\[
\VCN \geq 2k, \quad \VCN \geq 2k+1, \quad \text{or} \quad \VCN \geq 2k,
\]
according to whether $n = 3k$, $n = 3k+2$, or $n = 3k+1$, respectively.

For proving this, we will make repeated use of the following inequality, valid for any $\mathcal{S} \in \min(C_n)$:
\begin{equation}\label{equation with S}
n \geq \size{\mathcal{S}} + \size{C_{1}(\mathcal{S})} + 2\size{C_{2}(\mathcal{S})} = 3\size{\mathcal{S}} - \size{C_{1}(\mathcal{S})} = 2\size{C_{1}(\mathcal{S})} + 3\size{C_{2}(\mathcal{S})}.
\end{equation}

Moreover, if $\mathcal{S} = \emptyset$, it follows from \textup{\cite[Theorem~3.2]{jaramillo2024connected}} that $\VCN = n - 2$, and when $\abs{\mathcal{S}} = 2$, Theorem~\ref{main theorem} again gives $\VCN = n - 2$. Since we are assuming $n \geq 6$, we have $n - 2 \geq \left\lceil \tfrac{2n}{3} \right\rceil$. Thus, it suffices to consider the case where $\mathcal{S} \in \min(C_n)$ and $\abs{\mathcal{S}} \geq 3$.

\smallskip

\noindent
(i) When $n=3k$, we will show that $\VCN \geq 2k$ for every $\mathcal{S} \in \min(C_n)$ with $\abs{\mathcal{S}} \geq 3$, proceeding by cases:

\smallskip
\emph{Case 1: $\size{C_{1}(\mathcal{S})} \leq 1$.} Applying~\eqref{equation with S}, we obtain
\[
3k \geq 3\size{\mathcal{S}} - \size{C_{1}(\mathcal{S})} \geq 3\size{\mathcal{S}} - 1,
\]
which implies $\size{\mathcal{S}} \leq k$. Then, by Proposition~\ref{proposition localized v for Cn}~(i), (ii),
\[
\VCN \geq 3k - \size{\mathcal{S}} \geq 2k.
\]

\smallskip
\emph{Case 2: $\size{C_{1}(\mathcal{S})} \geq 2$.} Then~\eqref{equation with S} gives
\[
3k \geq 2\size{C_{1}(\mathcal{S})} + 3\size{C_{2}(\mathcal{S})} \geq 4 + 3\size{C_{2}(\mathcal{S})},
\]
yielding $\size{C_{2}(\mathcal{S})} \leq k - 2$. By Proposition~\ref{proposition localized v for Cn}~(iii),
\[
\VCN \geq 3k - \size{C_{2}(\mathcal{S})} - 2 \geq 2k.
\]

\smallskip
\noindent
(ii) When $n = 3k + 2$, we will show that $\VCN \geq 2k + 1$ for every $\mathcal{S} \in \min(C_n)$ with $\abs{\mathcal{S}} \geq 3$. We proceed as above.

\smallskip
\emph{Case 1: $\size{C_{1}(\mathcal{S})} \leq 1$.} Then~\eqref{equation with S} gives
\[
3k + 2 \geq 3\size{\mathcal{S}} - \size{C_{1}(\mathcal{S})} \geq 3\size{\mathcal{S}} - 1,
\]
hence $\size{\mathcal{S}} \leq k + 1$, and by Proposition~\ref{proposition localized v for Cn}~(i), (ii),
\[
\VCN \geq 3k + 2 - \size{\mathcal{S}} \geq 2k + 1.
\]

\smallskip
\emph{Case 2: $\size{C_{1}(\mathcal{S})} \geq 2$.} Then
\[
3k + 2 \geq 2\size{C_{1}(\mathcal{S})} + 3\size{C_{2}(\mathcal{S})} \geq 4 + 3\size{C_{2}(\mathcal{S})},
\]
so $\size{C_{2}(\mathcal{S})} \leq k - 1$. Proposition~\ref{proposition localized v for Cn}~(iii) then gives
\[
\VCN \geq 3k + 2 - \size{C_{2}(\mathcal{S})} - 2 \geq 2k + 1.
\]

\smallskip
\noindent
(iii) When $n = 3k + 1$, we will prove that $\VCN \geq 2k$ for every $\mathcal{S} \in \min(C_n)$ with $\abs{\mathcal{S}} \geq 3$.

\smallskip
\emph{Case 1: $\size{C_{1}(\mathcal{S})} \leq 1$.} Then~\eqref{equation with S} yields
\[
3k + 1 \geq 3\size{\mathcal{S}} - \size{C_{1}(\mathcal{S})} \geq 3\size{\mathcal{S}} - 1,
\]
so $\size{\mathcal{S}} \leq k$, and Proposition~\ref{proposition localized v for Cn}~(i), (ii) gives
\[
\VCN \geq 3k + 1 - \size{\mathcal{S}} \geq 2k.
\]

\smallskip
\emph{Case 2: $\size{C_{1}(\mathcal{S})} \geq 2$.} Then
\[
3k + 1 \geq 2\size{C_{1}(\mathcal{S})} + 3\size{C_{2}(\mathcal{S})} \geq 4 + 3\size{C_{2}(\mathcal{S})},
\]
implying $\size{C_{2}(\mathcal{S})} \leq k - 1$. Proposition~\ref{proposition localized v for Cn}~(iii) then ensures
\[
\VCN \geq 3k + 1 - \size{C_{2}(\mathcal{S})} - 2 \geq 2k. \qedhere
\]
\end{proof}

\begin{comment}

\subsection{Pathological cases}\label{GV1 and GV2 complete}

In this subsection, we consider the setting in which Assumption~\ref{assumption 1} does not hold; namely, when every vertex in $V_i$ is adjacent to all vertices in $V_i \cup \mathcal{S}$ for either $i = 1$ or $i = 2$.

\end{comment}

\smallskip

\noindent{\bf Acknowledgement.}  
The author warmly thanks Delio Jaramillo-Velez for bringing this problem to his attention and for his insightful suggestions. The author is also deeply grateful to Fatemeh Mohammadi for her insightful advice, and numerous valuable comments and discussions. This work was supported by the PhD fellowship 1126125N and partially funded by the FWO grants G0F5921N (Odysseus), G023721N, and the KU Leuven grant iBOF/23/064.

%\printbibliography
\bibliographystyle{abbrv}
\bibliography{Citation}

\begin{thebibliography}{10}

\bibitem{ambhore2024v}
S.~B. Ambhore, K.~Saha, and I.~Sengupta.
\newblock The v-number of binomial edge ideals.
\newblock {\em Acta Mathematica Vietnamica}, pages 1--18, 2024.

\bibitem{biswas2024study}
P.~Biswas and M.~Mandal.
\newblock A study of v-number for some monomial ideals.
\newblock {\em Collectanea Mathematica}, pages 1--16, 2024.

\bibitem{civan2023v}
Y.~Civan.
\newblock The v-number and castelnuovo--mumford regularity of graphs.
\newblock {\em Journal of Algebraic Combinatorics}, 57(1):161--169, 2023.

\bibitem{conca2024note}
A.~Conca.
\newblock A note on the v-invariant.
\newblock {\em Proceedings of the American Mathematical Society}, 152(06):2349--2351, 2024.

\bibitem{cooper2020generalized}
S.~M. Cooper, A.~Seceleanu, {\c{S}}.~O. Toh{\u{a}}neanu, M.~V. Pinto, and R.~H. Villarreal.
\newblock Generalized minimum distance functions and algebraic invariants of geramita ideals.
\newblock {\em Advances in Applied Mathematics}, 112:101940, 2020.

\bibitem{david1991ideals}
C.~David.
\newblock Ideals, varieties, and algorithms-an introduction to computational algebraic geometry and commutative algebra.
\newblock {\em Undergraduate Texts in Mathematics}, 1991.

\bibitem{dey2024v}
D.~Dey, A.~Jayanthan, and K.~Saha.
\newblock On the v-number of binomial edge ideals of some classes of graphs.
\newblock {\em International Journal of Algebra and Computation}, pages 1--25, 2024.

\bibitem{ene2011grobner}
V.~Ene and J.~rgen Herzog.
\newblock {\em Grobner bases in commutative algebra}, volume 130.
\newblock American Mathematical Soc., 2011.

\bibitem{ficarra2025asymptotic}
A.~Ficarra and E.~Sgroi.
\newblock Asymptotic behaviour of integer programming and the v-function of a graded filtration.
\newblock {\em Journal of Algebra and its Applications}, 2025.

\bibitem{geramita1993cayley}
A.~V. Geramita, M.~Kreuzer, and L.~Robbiano.
\newblock Cayley-bacharach schemes and their canonical modules.
\newblock {\em Transactions of the American Mathematical Society}, 339(1):163--189, 1993.

\bibitem{grisalde2021induced}
G.~Grisalde, E.~Reyes, and R.~H. Villarreal.
\newblock Induced matchings and the v-number of graded ideals.
\newblock {\em Mathematics}, 9(22):2860, 2021.

\bibitem{herzog2010binomial}
J.~Herzog, T.~Hibi, F.~Hreinsd{\'o}ttir, T.~Kahle, and J.~Rauh.
\newblock Binomial edge ideals and conditional independence statements.
\newblock {\em Advances in Applied Mathematics}, 45(3):317--333, 2010.

\bibitem{jaramillo2021v}
D.~Jaramillo and R.~H. Villarreal.
\newblock The v-number of edge ideals.
\newblock {\em Journal of Combinatorial Theory, Series A}, 177:105310, 2021.

\bibitem{jaramillo2024connected}
D.~Jaramillo-Velez and L.~Seccia.
\newblock Connected domination in graphs and v-numbers of binomial edge ideals.
\newblock {\em Collectanea Mathematica}, 75(3):771--793, 2024.

\bibitem{kumar2025slope}
M.~Kumar, R.~Nanduri, and K.~Saha.
\newblock The slope of the v-function and the waldschmidt constant.
\newblock {\em Journal of Pure and Applied Algebra}, 229(2):107881, 2025.

\bibitem{mohammadi2014hilbert}
F.~Mohammadi and L.~Sharifan.
\newblock Hilbert function of binomial edge ideals.
\newblock {\em Communications in Algebra}, 42(2):688--703, 2014.

\bibitem{nunez2017footprint}
L.~N{\'u}{\~n}ez-Betancourt, Y.~Pitones, and R.~H. Villarreal.
\newblock Footprint and minimum distance functions.
\newblock {\em arXiv preprint arXiv:1712.00387}, 2017.

\bibitem{ohtani2011graphs}
M.~Ohtani.
\newblock Graphs and ideals generated by some 2-minors.
\newblock {\em Communications in Algebra{\textregistered}}, 39(3):905--917, 2011.

\bibitem{Oxley}
J.~Oxley.
\newblock {\em Matroid Theory}.
\newblock Second edition, Oxford University Press, 2011.

\bibitem{piff1970vector}
M.~J. Piff and D.~J. Welsh.
\newblock On the vector representation of matroids.
\newblock {\em Journal of the London Mathematical Society}, 2(2):284--288, 1970.

\bibitem{pinto2023graph}
M.~V. Pinto and R.~H. Villarreal.
\newblock Graph rings and ideals: Wolmer vasconcelos contributions.
\newblock {\em arXiv preprint arXiv:2305.06270}, 2023.

\bibitem{saha2024v}
K.~Saha and N.~Kotal.
\newblock On the v-number of gorenstein ideals and frobenius powers.
\newblock {\em Bulletin of the Malaysian Mathematical Sciences Society}, 47(6):167, 2024.

\bibitem{saha2022v}
K.~Saha and I.~Sengupta.
\newblock The v-number of monomial ideals.
\newblock {\em Journal of Algebraic Combinatorics}, 56(3):903--927, 2022.

\end{thebibliography}

\medskip
{\footnotesize\noindent {\bf Authors' addresses}
\medskip

\noindent{Emiliano Liwski, 
KU Leuven}\hfill {\tt  emiliano.liwski@kuleuven.be}

\end{document}